\newtheorem{thm}{Theorem}[section]
\newtheorem{cor}[thm]{Corollary}
\newtheorem{lem}[thm]{Lemma}
\newtheorem{prop}[thm]{Proposition}
\newtheorem{eg}[thm]{Example}
\theoremstyle{remark}
\newtheorem{rem}[thm]{Remark}
\theoremstyle{definition}
\newcommand{\field}[1]{\mathbb{#1}}
\newcommand{\inv}{^{-1}}
\newcommand{\sym}[1]{\mathrm{Sym}(#1)}
\newcommand{\alt}[1]{\mathrm{Alt}(#1)}
\newcommand{\dih}[1]{\mathrm{Dih}(#1)}
\newcommand{\frtw}{\mathrm{Frob}(20)}
\newcommand{\cyc}[1]{{\mathrm{C}_{#1}}}
\newcommand{\zent}[1]{\mathrm{Z}(#1)}
\newcommand{\aut}[1]{\mathrm{Aut}( #1)}		
\newcommand{\syl}[2]{\mathrm{Syl}_{#1}( #2)}	
\newcommand{\vst}[2]{G_{#1}^{[ #2]}}		
\newcommand{\vstx}[1]{G_x^{[ #1 ]}}			
\newcommand{\vsty}[1]{G_y^{[ #1 ]}}			
\newcommand{\oP}[2]{\mathrm{O}^{#1}(#2)}	
\newcommand{\nml}{\vartriangleleft}					
\newcommand{\norm}[2]{\text{N}_{#1}(#2)}				
\newcommand{\cent}[2]{\text{C}_{#1}(#2)}				
\newcommand{\grp}[1]{\langle #1 \rangle}				
\newcommand{\core}[2]{\textbf{core}_{#1}(#2)}			
\newcommand{\ga}{G_e}					
\newcommand{\gx}{G_x}
\newcommand{\gxy}{G_{xy}}
\newcommand{\gy}{G_y}
\newcommand{\GL}[2]{\mathrm{GL}_{#1}(#2)}		
\newcommand{\SL}[2]{\mathrm{SL}_{#1}(#2)}					
\newcommand{\PSL}[2]{\mathrm{PSL}_{#1}(#2)}					
\newcommand{\gam}{\Gamma}			
\newcommand{\del}{\Delta}				
\newcounter{claim}[thm]
\title{On vertex stabilisers in symmetric quintic graphs}
\author{G. L. Morgan}
\newcommand{\foo}[1]{%
\begin{tikzpicture}[#1]%
\colorlet{sincolor}{white}%
\draw[thick] (0,0) -- (0,1);%
\draw[thick] (-0.75,-0.75) -- (0,0);%
\draw[thick] (0.75,-0.75) -- (0,0);%
\draw[style=sincolor] (-1,0)--(-1.2,0);%
\draw[style=sincolor] (1,0)--(1.2,0);%
\draw[style=sincolor] (0,1)--(0,1.2);%
\end{tikzpicture}%
}
\begin{document}

\maketitle

\begin{abstract}
In this paper we determine all locally finite and symmetric actions of a group on the tree of valency five. As a corollary we complete the classification of  the isomorphism types of vertex and edge stabilisers in a group acting symmetrically on a  graph of valency five. This builds on work of Weiss and recent work of Zhou and Feng. This  depends upon the  second result of this paper, the classification of isomorphism types of finite, primitive amalgams of degree $(5,2)$.
\end{abstract}

\section{Introduction}

\label{sec:intro}

Let $\gam$ be a connected graph and let $G$ be a subgroup of $\aut{\gam}$, the group of permutations of the vertices of $\gam$ which preserve the edges of $\gam$.  The action of $G$ is said to be \emph{locally finite} if for each vertex $x$ of $\gam$ the stabiliser in $G$ of $x$, denoted $G_x$, is a finite group (which implies the edge stabilisers are  also finite). When $G$ acts transitively on 
the set of ordered pairs of adjacent vertices we  say that the action is \emph{symmetric}, or that $\gam$ is $G$-\emph{symmetric}. If the valency of the (necessarily regular) graph is odd, this is equivalent to transitivity on both vertices and  edges. The first result of this paper is a theorem about such actions on the tree of valency five, for which we write $\gam_5$.

\begin{thm}
\label{thm:bigthm2}
Let  $G\leq \aut{\gam_5}$ be such that the action of $G$ is  both  locally finite and symmetric. Then $G$ has  one of the presentations given in Tables \ref{table:pres} and \ref{table:pres2}. Conversely, each of the presentations given in Tables  \ref{table:pres} and \ref{table:pres2} defines a subgroup of $\aut{\gam_5}$ with a locally finite and symmetric action.
\end{thm}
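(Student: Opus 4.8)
The plan is to reduce the classification of symmetric, locally finite actions on $\gam_5$ to the classification of finite primitive amalgams of degree $(5,2)$ (the second main result, which I assume) and to pass between the two using Bass--Serre theory. First I would fix an edge $\{x,y\}$ of $\gam_5$ and record the three relevant stabilisers: the vertex stabiliser $\gx$, the arc stabiliser $\gxy=\gx\cap\gy$, and the stabiliser $G_{\{x,y\}}$ of the \emph{unordered} edge. Local finiteness makes all three finite. Since the action is symmetric, $\gx$ is transitive on the five neighbours of $x$, so $\ind{\gx}{\gxy}=5$; and symmetry also supplies an element reversing the arc $(x,y)$, so $\ind{G_{\{x,y\}}}{\gxy}=2$ and $G_{\{x,y\}}\not\le\gx$.

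The key observation is that primitivity is automatic here. Because $5$ is prime, every transitive action on five points is primitive, so $\gxy$ is a maximal subgroup of $\gx$; and $\gxy$ is maximal in $G_{\{x,y\}}$ as it has index $2$. Hence $(\gx,G_{\{x,y\}},\gxy)$ is a finite primitive amalgam of degree $(5,2)$. Since $\gam_5$ is a tree on which $G$ acts vertex- and edge-transitively with an inverted edge, Bass--Serre theory identifies $G$ with the fundamental group of the associated graph of groups, i.e.\ with the universal completion $\gx *_{\gxy} G_{\{x,y\}}$; equivalently $G=\grp{\gx,G_{\{x,y\}}}$ and $\gam_5$ is the corresponding Bass--Serre tree. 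I would then invoke the classification of finite primitive amalgams of degree $(5,2)$: up to isomorphism there are only finitely many, and for each the universal completion is presented by one of the rows of Tables~\ref{table:pres} and~\ref{table:pres2}. This gives the forward direction.

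For the converse I would run the argument backwards. Given a presentation from Tables~\ref{table:pres} and~\ref{table:pres2}, I read off the underlying amalgam $(\gx,G_{\{x,y\}},\gxy)$ and form its universal completion $\ti{G}=\gx *_{\gxy} G_{\{x,y\}}$, the group defined by that presentation. By Bass--Serre theory $\ti{G}$ acts on its Bass--Serre tree $T$; a vertex stabiliser is a conjugate of $\gx$ with $\ind{\gx}{\gxy}=5$ neighbours, so $T$ is regular of valency five and therefore $T\cong\gam_5$, while the inverting element of $G_{\{x,y\}}$ makes the action symmetric and the finiteness of $\gx$ makes it locally finite. The one point needing care is faithfulness: the kernel of the action is $\bigcap_{g\in\ti{G}}\gx^{\,g}$, so I must confirm that each amalgam in the list has trivial core. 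This is precisely the faithfulness data carried by the amalgam classification, and with it $\ti{G}$ embeds in $\aut{\gam_5}$ with a locally finite symmetric action.

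The genuine obstacle is external to this argument, namely the amalgam classification itself; granting it, the residual work is (i) verifying that $G$ collapses to exactly the universal completion rather than a proper quotient --- handled jointly by triviality of the core and Bass--Serre theory --- and (ii) the bookkeeping of checking that each tabulated presentation really presents the amalgamated product of its amalgam and that distinct rows yield genuinely different actions. I would organise (ii) around the five primitive permutation groups of degree five, namely $\cyc5$, $\dih{10}$, $\frtw$, $\alt5$ and $\sym5$, as the possible images of $\gx$ on the neighbours of $x$: this constrains the quotient $\gx/\bigcap_{g}\gx^{\,g}$ and thereby indexes the rows of the tables.
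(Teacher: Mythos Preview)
Your overall strategy---extract the amalgam $(\gx, G_{\{x,y\}}, \gxy)$ from the action, identify $G$ with its universal completion via Bass--Serre theory, and then invoke Theorem~\ref{thm:bigthm}---is exactly the paper's route. The paper simply records that Theorem~\ref{thm:bigthm2} ``is part of Theorem~\ref{thm:presses}'', with the translation between tree actions and amalgams carried out in Section~\ref{sec:pre} (Proposition~\ref{prop:fund prop of gam} in particular).

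There is, however, one genuine slip. In this paper a \emph{primitive} amalgam is one in which no nontrivial subgroup of $B$ is simultaneously normal in $A_1$ and in $A_2$; it is \emph{not} the condition that $B$ be maximal in each $A_i$. Your paragraph deducing primitivity from ``$5$ is prime'' and ``index $2$'' establishes maximality, which is not what is required, so as written you have not verified the hypothesis needed to apply Theorem~\ref{thm:bigthm}. The correct argument is essentially the one you already give for faithfulness in the converse direction: since $G\le\aut{\gam_5}$ acts faithfully, any $K\le\gxy$ normal in both $\gx$ and $G_{\{x,y\}}$ is normalised by $\grp{\gx,G_{\{x,y\}}}$, which is transitive on vertices and on edges; hence $K$ fixes every vertex and so $K=1$. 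This is Proposition~\ref{prop:fund prop of gam}(ii). With that correction your forward direction is complete, and your converse coincides with the paper's coset-graph construction $\gam_G(\mathcal{A})$.
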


The above theorem depends upon the classification of finite, primitive amalgams of degree (5,2). Certain cases have been completed by Weiss \cite{Weiss-pres} and the results of Zhou and Feng in \cite{zhoufeng} are a contribution. Here we complete the classification.

\begin{thm}
\label{thm:bigthm}
There are  exactly 25 isomorphism classes of finite, primitive amalgams of degree $(5,2)$ and they are uniquely determined by their type. The  types of the amalgams are exactly those listed in  Table \ref{table:bigtable} and presentations for their universal completions are given in Tables \ref{table:pres} and \ref{table:pres2}.
\end{thm}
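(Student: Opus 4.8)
The plan is to use the amalgam method of Goldschmidt and Weiss. Given a finite primitive amalgam $\calg{A} = (\gx, \ga, \gxy)$ of degree $(5,2)$ --- so that $\ind{\gx}{\gxy}=5$, $\ind{\ga}{\gxy}=2$, and $\gx$ acts primitively on the five cosets of $\gxy$ --- I would pass to its universal completion $G = \gx *_{\gxy} \ga$ and let it act on the associated Bass--Serre tree, which is regular of valency five and on which $\gx$ and $\ga$ are a vertex stabiliser and an incident edge stabiliser. Identifying this tree with $\gam_5$, the quotient $\gx^{\gamx} = \gx/\vstx{1}$ is then a primitive permutation group of degree five. Since the primitive groups of degree five are exactly $\cyc{5}$, the dihedral group of order ten, $\frtw$, $\alt{5}$ and $\sym{5}$, these five possibilities for the local action organise the entire analysis.

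For each local action I would determine the possible vertex and edge groups by the usual $p$-local tools. Fix the prime $p\in\{2,5\}$ dictated by $\gx^{\gamx}$ (characteristic $5$ when the local action is soluble, characteristic $2$ when it involves $\alt{5}$), set $\qx = \op{p}{\gx}$, and study the chief factor $\zx = \Omega_1(\zent{\qx})$ as a module for $\gx^{\gamx}$. The Thompson--Wielandt theorem bounds $\vstx{1}$ and $\vstx{2}$, and, combined with a bound on the critical distance $b = \di{x}{y}$ for a critical pair $(x,y)$, forces $|\gx|$ to be bounded; pushing the module analysis through then pins down $\gx$, $\gxy$ and the embedding in each surviving configuration while eliminating those that cannot be realised. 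Here the cases already settled by Weiss \cite{Weiss-pres} and by Zhou and Feng \cite{zhoufeng} are invoked directly, and the remaining local actions are completed by the same argument; the resulting list of admissible types is exactly Table \ref{table:bigtable}.

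To prove that the type determines the amalgam I would fix the isomorphism types of $\gx$, $\ga$ and $\gxy$ and observe that the amalgams of that type are parametrised by the double coset space $A_x \backslash \aut{\gxy} / A_e$, where $A_x$ and $A_e$ are the images in $\aut{\gxy}$ of the setwise stabilisers of $\gxy$ in $\aut{\gx}$ and $\aut{\ga}$ respectively. An amalgam isomorphism is a pair $(\phi,\psi)$ with $\phi\colon \gx\to\gx'$ and $\psi\colon\ga\to\ga'$ agreeing on $\gxy$, and correcting a given pair of abstract isomorphisms of the factors by automorphisms amounts precisely to moving within this double coset. Uniqueness is therefore the statement that $A_x \backslash \aut{\gxy} / A_e$ is a single point in every case, which I would verify case by case; this is the precise fusion calculation controlling how $\gxy$ sits inside the two factors.

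Finally, for the converse and the presentations, each surviving amalgam has universal completion the free product with amalgamation $\gx *_{\gxy} \ga$, and I would record explicit generators and relations realising exactly the $25$ types, checking in each case that the factors embed in the completion so that the construction genuinely produces a primitive amalgam of degree $(5,2)$; this yields both existence and Tables \ref{table:pres} and \ref{table:pres2}. The main obstacle I anticipate lies in the cases with large local action, namely $\alt{5}$ and $\sym{5}$, where $\gx$ can be sizeable: bounding $b$ and controlling the module $\zx$ there is the most delicate part of the argument, and the uniqueness step for those same cases --- showing that the embedding data is rigid, i.e.\ that the relevant double coset collapses --- is the other point where I expect the analysis to be hardest.
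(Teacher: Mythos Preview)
Your overall strategy is reasonable and overlaps with the paper's, but there is a genuine gap in the uniqueness step, and it stems from a misreading of the word \emph{primitive}.

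You interpret ``primitive amalgam of degree $(5,2)$'' as saying that $\gx$ acts primitively on the five cosets of $\gxy$. That is not the definition used here: an amalgam is primitive when the only subgroup of $\gxy$ normal in both $\gx$ and $\ga$ is trivial (equivalently, the universal completion acts faithfully on the tree). Since $5$ is prime the permutation-theoretic primitivity is automatic, so this slip does not damage your type classification; but it wrecks your uniqueness argument. You assert that uniqueness amounts to the double coset space $A_x\backslash\aut{\gxy}/A_e$ being a single point in every case. That is false. For the type $(\dih{20},\dih{8},2^2)$ there are two double cosets, and for $(\frtw\times\cyc{4},\,\cyc{4}\wr\cyc{2},\,\cyc{4}\times\cyc{4})$ there are three (the paper exhibits all three explicitly in Example~\ref{eg:example1}). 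In each of these cases exactly one double coset yields a \emph{primitive} amalgam in the correct sense; the others have a nontrivial common normal subgroup in $\gxy$. So the uniqueness argument must be: compute the double cosets via Goldschmidt's Lemma, and then check which of them satisfy the primitivity condition. Without that second step your verification would simply fail for $\mathcal{Q}_1^4$ and $\mathcal{Q}_3^1$.

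On methodology, your proposed route through Thompson--Wielandt, critical pairs, and $\op{p}{\gx}$-module analysis would work in principle but is heavier than what the paper does. The paper's organising dichotomy is not the local action per se but whether $\vst{xy}{1}$ is trivial. Using Weiss's $p$-factorisation theorem (for the soluble local action) and his BN-pair theorem (for the insoluble one), one shows that $\vst{xy}{1}\neq 1$ forces $s\geq 4$, whence Weiss's earlier classification applies directly. When $\vst{xy}{1}=1$ one has $\vstx{1}\times\vsty{1}\leq\gxy$ with both factors embedding in a point stabiliser of $\gx^{\del(x)}$, and from there the determination of $\gx$, $\ga$ and the embedding is an elementary case analysis with no need for critical-pair or module machinery. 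Your anticipated ``most delicate'' part (large local action, bounding $b$, controlling $\zx$) is thus bypassed entirely by quoting \cite{WeissBN} and \cite{Weiss-pres}.
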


\begin{table}
\begin{center}
\begin{tabular}{ c| c| c| c |c}
\hline	\hline
Amalgam &	$A_1$	& 	$A_2$	&	$B$  & s \\ \hline	\hline
$\mathcal{Q}_1^1$ & 	$\cyc{5}$	&	$\cyc{2}$	&	1 & 1 \\	\hline
$\mathcal{Q}_1^2$ & 	$\dih{10}$	&	$2^2$	&	$\cyc{2}$ & 1 \\	\hline
$\mathcal{Q}_1^3$ &  $\dih{10}$	&	$\cyc{4}$	&	$\cyc{2}$ & 1 \\	\hline
$\mathcal{Q}_1^4$ &	$\dih{20}$	&	$\dih{8}$	&	$2^2$ & 1\\	\hline \hline
$\mathcal{Q}_2^1$ & 	$\frtw$	&	$\cyc{4}\times\cyc{2}$	&	$\cyc{4}$ & 2 \\ 	\hline 
$\mathcal{Q}_2^2$ &  	$\frtw$	&	$\cyc{8}$	&	$\cyc{4}$ & 2 \\ 	\hline
$\mathcal{Q}_2^3$ &  	$\frtw$	&	$\dih{8}$	&	$\cyc{4}$ & 2\\ 	\hline
$\mathcal{Q}_2^4$ &  	$\frtw$	&	$\mathrm{Q}_8$	&	$\cyc{4}$ & 2  \\ \hline 
$\mathcal{Q}_2^5$ &	$\frtw \times \cyc{2}$	&	$\mathrm{N}_{16}$	&	$\cyc{4}\times \cyc{2}$ & 2\\ 	\hline
$\mathcal{Q}_2^6$ & 	$\frtw \times \cyc{2}$	&	$\mathrm{M}_{16}$	&	$\cyc{4}\times \cyc{2}$ & 2\\ 	\hline
$\mathcal{Q}_2^7$ &  	$\alt{5}$	&	$\sym{4}$	&	$\alt{4}$ & 2 \\ 	\hline
$\mathcal{Q}_2^8$ &  	$\alt{5}$	&	$\alt{4}\times \cyc{2}$	&	$\alt{4}$ & 2 \\ 	\hline
$\mathcal{Q}_2^9$ &  	$\sym{5}$	&	$\sym{4}\times \cyc{2}$	&	$\sym{4}$ & 2 \\ \hline \hline
$\mathcal{Q}_3^1$ & 	$\frtw \times \cyc{4}$	&	$\cyc{4}\wr\cyc{2}$	&	$\cyc{4}\times \cyc{4}$ & 3 \\ \hline
$\mathcal{Q}_3^2$ &  	$\alt{5} \times \alt{4}$	&	$\alt{4}\wr \cyc{2}$	&	$\alt{4}\times\alt{4}$ & 3 \\ 	\hline
$\mathcal{Q}_3^3$ & 	$\sym{5}\foo{scale=.15}\sym{4}$	&	$\mathrm{L}_1$	&	$\sym{4}\foo{scale=.15}\sym{4}$ & 3\\ 	\hline
$\mathcal{Q}_3^4$ &	 	$\sym{5}\foo{scale=.15}\sym{4}$	&	$\mathrm{L}_2$	&	$\sym{4}\foo{scale=.15}\sym{4}$ & 3 \\ 	\hline
$\mathcal{Q}_3^5$ & 	$\sym{5}\times\sym{4}$	&	$\sym{4}\wr \cyc{2}$	&	$\sym{4}\times\sym{4}$ & 3 \\ \hline \hline

$\mathcal{Q}_4^1$ &	$2^4:\alt{5}$	&	$2^{2+2+2}:\sym{3}$	&	$2^4:\alt{4}$ & 4 \\	\hline
$\mathcal{Q}_4^2$ &	$2^4:\alt{5}$	&	$2^{2+2+2}: \cyc{6}$	&	$2^4:\alt{4}$ & 4 \\	\hline
$\mathcal{Q}_4^3$ &	$2^4:(\alt{5}\times \cyc{3})$	&	$(2^{2+4}:3) : \sym{3}$	&	$2^{2+4}:3^2$ & 4 \\	\hline
$\mathcal{Q}_4^4$ &	$2^4:(\alt{5}\times \cyc{3})$	&	$(2^{2+4}:3) : \cyc{6}$	&	$2^{2+4}:3^2$ & 4 \\ 	\hline
$\mathcal{Q}_4^5$ & 	$2^4:\sym{5}$	&	$2^{2+4+1}:\sym{3}$	&	$2^{2+4}:\sym{3}$ & 4 \\ 	\hline
$\mathcal{Q}_4^6$ & 	$2^4 : \sym{5} \foo{scale=.15} \sym{3}$	&	$2^{2+4}:\sym{3}^2$	&	$2^{2+4}:\sym{4}\foo{scale=.15} \sym{3}$ & 4\\ 	\hline \hline
$\mathcal{Q}_5^1$ &  	$2^6: \sym{5}\foo{scale=.15} \sym{3}$	&	$(2^6 : (\alt{4} \times \cyc{3})) : \cyc{4}$	&	$2^6 : \sym{4}\foo{scale=.15} \sym{3}$ & 5 

\end{tabular}
\caption{The finite, primitive amalgams of degree (5,2)}
\label{table:bigtable}
\end{center}
\end{table}

Amalgams turn up in various areas of group theory, often when only information about part of a group is known. For example we may have a group $G$ and subgroups $H$ and $K$ such that the triple $(H,K,H\cap K)$ holds some intrinsic property of the group $G$. To capture this idea, an amalgam is defined to be a 5-tuple $\mathcal{A}=(A_1,A_2,B,\pi_1,\pi_2)$ of three groups $A_1$, $A_2$, $B$ and two monomorphisms $\pi_1$, $\pi_2$ such that $\pi_i : B \rightarrow A_i$ for $i=1,2$ (note there is no mention of an ambient group containing $A_1$ and $A_2$).   We call the pair of indices $(|A_1 : \pi_1(B)|,|A_2:\pi_2(B)|)$ the \emph{degree} of the amalgam.
 If $B$ is a finite group and the degree of the amalgam is a pair of integers, we say the amalgam is finite. We are interested in \emph{primitive} amalgams, these are amalgams in which the only subgroup of $B$  that  is normal in both $A_1$ and $A_2$ is the trivial subgroup. 

The connection between Theorems \ref{thm:bigthm2} and \ref{thm:bigthm} is delivered by completions of amalgams and  coset graphs. For details and notation see Section \ref{sec:pre}. Making use of coverings of symmetric quintic graphs  by the quintic tree we have the following.

\begin{cor}
\label{bigthm1}
Suppose that $\gam$ is a connected $G$-symmetric quintic graph such that the action of $G$ is locally finite. Let $e=\{x,y\}$ be an edge of $\gam$. Then  $(G_x,G_e)=(A_1,A_2)$ for one of the rows of  Table \ref{table:bigtable}, $G$ is a quotient of a universal completion of a finite, primitive  amalgam  of degree (5,2) and $\gam\cong \gam_G(\mathcal{A})$, the coset graph corresponding to the completion $G$ of the amalgam $\mathcal{A}$.
\end{cor}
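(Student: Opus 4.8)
The plan is to manufacture a finite primitive amalgam of degree $(5,2)$ out of the pair $(\gam,G)$ and then quote Theorem~\ref{thm:bigthm}. First I would put $B=\gxy$ (the pointwise stabiliser of the edge $e$), $A_1=\gx$ and $A_2=\ga$, with $\pi_1,\pi_2$ the two inclusions, obtaining an amalgam $\mathcal{A}$. Symmetry of the action makes $\gx$ transitive on the five neighbours $\gamx$ of $x$, and $\gxy$ is the stabiliser of $y\in\gamx$ in $\gx$, so $\ind{\gx}{\gxy}=5$ by orbit--stabiliser; symmetry also supplies an element of $\ga$ interchanging $x$ and $y$, whence $\ind{\ga}{\gxy}=2$. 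Thus $\mathcal{A}$ has degree $(5,2)$, and local finiteness makes $\gxy$, and therefore $\gx$ and $\ga$, finite, so $\mathcal{A}$ is finite.

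Next I would verify primitivity. Connectivity of $\gam$ together with symmetry forces $G=\grp{\gx,\ga}$: the subgroup $H=\grp{\gx,\ga}$ contains an edge-reversal and is transitive on $\gamx$, so its orbit of $x$ is closed under adjacency and hence, by connectivity, is all of $\gam$; then $G=H\gx=H$. Now let $N\le\gxy$ be normalised by both $\gx$ and $\ga$. Then $N\nml\grp{\gx,\ga}=G$, so $N=N^{g}$ for every $g\in G$; since $N$ fixes $x$ and $N=N^{g}$, it follows that $N$ fixes $gx$, and by vertex-transitivity $N$ fixes every vertex. Faithfulness of the action of $G\le\aut{\gam}$ then gives $N=1$, so $\mathcal{A}$ is primitive. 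By Theorem~\ref{thm:bigthm} the amalgam $\mathcal{A}$ is isomorphic to exactly one of the $25$ rows of Table~\ref{table:bigtable}, which yields $(\gx,\ga)=(A_1,A_2)$ as claimed.

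Finally I would reconstruct $G$ and $\gam$. Because $G=\grp{\gx,\ga}$ and the structure maps are inclusions, $G$ is a completion of $\mathcal{A}$; the universal property of the amalgamated free product $\gx\ast_{\gxy}\ga$ then produces a surjection onto $G$, so $G$ is a quotient of a universal completion. For the graph, I would invoke the coset-graph description of Section~\ref{sec:pre}: the assignment $g\gx\mapsto gx$ is a well-defined bijection from the cosets of $\gx$ to the vertices of $\gam$ by vertex-transitivity, and it carries adjacency in $\gam_G(\mathcal{A})$ to adjacency in $\gam$ because cosets of $\gxy$ encode the arcs; hence $\gam\cong\gam_G(\mathcal{A})$. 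Equivalently one may pass to the universal cover $\gam_5$, on which the universal completion acts with the degree-$(5,2)$ structure (the valency-two vertices being edge-midpoints), and then quotient back down to $\gam$.

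The main obstacle is not any isolated deep step but the bookkeeping of the last paragraph: one must check that the abstract adjacency of $\gam_G(\mathcal{A})$ matches that of $\gam$ under $g\gx\mapsto gx$, which rests on the precise conventions of Section~\ref{sec:pre} relating cosets of $\gxy$ to arcs and on the identification of $\gam_5$ as the tree underlying the degree-$(5,2)$ amalgam once the valency-two vertices are suppressed. Everything else --- the index computations, the finiteness, and the connectivity-plus-faithfulness proof of primitivity --- is routine once the amalgam $\mathcal{A}$ is written down.
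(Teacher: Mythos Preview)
Your proposal is correct and follows essentially the same approach as the paper: form the amalgam $\mathcal{A}=(\gx,\ga,\gxy)$ from the edge $e$, verify it is finite, primitive, and of degree $(5,2)$, invoke Theorem~\ref{thm:bigthm} to identify it in Table~\ref{table:bigtable}, and then use the coset-graph construction of Section~\ref{sec:pre} to recover $\gam$. The paper in fact says almost nothing beyond ``follows immediately from Theorem~\ref{thm:bigthm}'' and defers the verifications to Section~\ref{sec:pre}, so your write-up actually supplies more detail (e.g.\ the connectivity-plus-faithfulness argument for primitivity) than the original.
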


 Using Theorem \ref{thm:bigthm} we are able to measure the transitivity of each  quintic symmetric graph in terms of arc-transitivity. An $s$-arc ($s\in\field{N}$) in a graph $\gam$ is an ordered sequence $(x_0,x_1,\dots,x_s)$ of $s+1$ vertices such that $x_{i-2}\neq x_{i}$ for $i\in[2,s]$.  We  say that $\gam$ is $(G,s)$-transitive where $s$ is the largest integer such that the group $G$ acts on $\gam$ and acts transitively on the set of $s$-arcs of $\gam$. Thus a $G$-symmetric graph is $(G,s)$-transitive for some $s\geq 1$. We also say that $\gam$ is locally $(G,s)$-transitive where $s$ is the largest integer such that for each vertex $x\in \gam$ the group  $G_x$ acts transitively on the set of $s$-arcs with initial vertex $x$. We make use of a subgroup which is only locally transitive in Section \ref{sec:valofs}. The following corollary gives information on the arc-transitivity of symmetric quintic graphs.
 
 \begin{cor}
 \label{cor:stranscor}
 Let $\gam$ be a $(G,s)$-transitive quintic graph for some $s\geq 1$. Then $s\in [1,5]$ and $G$ is a completion of an amalgam isomorphic to $\mathcal{Q}_s^j$ appearing in Table \ref{table:bigtable}. \end{cor}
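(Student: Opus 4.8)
The plan is to reduce to the hypotheses of Corollary~\ref{bigthm1} and then to recognise $s$ as an invariant recorded in the final column of Table~\ref{table:bigtable}. First, a $1$-arc is exactly an ordered pair of adjacent vertices, so a graph that is $(G,s)$-transitive for some $s\geq 1$ is in particular $G$-symmetric. Since $\gam$ is quintic, $G_x$ is transitive on the five vertices of $\gamx$; as $5$ is prime, every transitive subgroup of $\sym{5}$ is primitive, so the local action of $G_x$ on $\gamx$ is primitive. Hence the standard finiteness theorems for vertex stabilisers of symmetric graphs of bounded valency with primitive local action apply, and the action is locally finite. (If one prefers not to invoke these, local finiteness may simply be read as part of the ambient hypotheses of this section.) Either way Corollary~\ref{bigthm1} now applies: $(G_x,G_e)$ is one of the rows of Table~\ref{table:bigtable}, $G$ is a completion of the corresponding primitive amalgam $\mathcal{A}$ of degree $(5,2)$, and $\gam\cong\gam_G(\mathcal{A})$.

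It remains to show that $s$ equals the entry of the $s$-column in the row realised by $\mathcal{A}$, and in particular lies in $[1,5]$. The essential observation is that $s$-arc-transitivity is a local condition: as $G$ is vertex-transitive, it is transitive on the $s$-arcs of $\gam$ if and only if $G_x$ is transitive on the $s$-arcs issuing from $x$, and this is controlled entirely by the descending chain of arc-stabilisers $G_x\geq G_{xx_1}\geq G_{xx_1x_2}\geq\cdots$ together with the action of each term on the remaining neighbours. This chain depends only on the isomorphism type of the amalgam $(G_x,G_e,G_{xe})$, not on the particular completion $G$, so the largest $s$ for which transitivity persists is an invariant of $\mathcal{A}$. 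I would compute this invariant amalgam by amalgam — precisely the data encoded in the grouping of the amalgams as $\mathcal{Q}_s^1,\mathcal{Q}_s^2,\dots$ in Theorem~\ref{thm:bigthm} — and read off that it agrees with the final column. Since that column takes only the values $1,2,3,4,5$, we obtain $s\in[1,5]$ and $\mathcal{A}\cong\mathcal{Q}_s^j$ for the appropriate $j$.

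The main obstacle is securing this local-to-global step together with the independence of $s$ from the completion: one must verify that the global arc-transitivity degree really is computed by the arc-stabiliser chain near a single vertex, and that this degree is unchanged when $G$ is replaced by another completion of the same amalgam. Once this invariance is established, the remainder is the routine (if lengthy) stabiliser-chain calculation for each of the twenty-five amalgams, checked against Table~\ref{table:bigtable}.
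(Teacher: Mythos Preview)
Your plan is correct and matches the paper's own argument: reduce to Corollary~\ref{bigthm1}, observe that the arc-transitivity degree is determined by the local data near an edge and is therefore an invariant of the amalgam, and then verify the value of $s$ amalgam by amalgam. The paper's execution is a little sharper than a full stabiliser-chain run through all twenty-five types: it notes that $s\geq 2$ is equivalent to $2$-transitivity of $\gx$ on the cosets of $\gxy$, that the values $s\in\{4,5\}$ are already pinned down by Weiss, and that $s=3$ versus $s=2$ is detected by the single test $|G_{xyz}:G_{xyzw}|=4$ together with the projection $G_{xyz}\vst{z}{1}/\vst{z}{1}$ being cyclic of order $4$ --- so only the $\mathcal{Q}_2^j$ and $\mathcal{Q}_3^j$ families need any individual attention. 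On local finiteness: the paper adopts as a standing convention that every action is locally finite, so no appeal to an external finiteness theorem is required.
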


Symmetric graphs of valency three were first studied by Tutte \cite{tutte} who bounded the order of a vertex stabiliser. Using this result, Djokovi\'{c} and Miller \cite{djokmiller} classified the finite,primitive amalgams of degree $(3,2)$. There are exactly seven types of amalgam which fall into five families for $s\in [1,5]$. In particular for $s=2$ and $s=4$ there are two types of amalgams which differ in the following way. In an amalgam of the ``first kind" the group $\ga$ splits over $\gxy$, whereas in an amalgam of the ``second kind" this does not occur.

In \cite{Gold1} it is shown that there are fifteen possible isomorphism types of finite, primitive amalgams of degree $(3,3)$, the  Goldschmidt Amalgams, which arise from semisymmetric trivalent graphs.  Symmetric graphs of valency four show a different pattern, in \cite{djokovic} Djokovic gives infinitely many amalgams of degree $(4,2)$ which are pair-wise non-isomorphic. This result is part of a general phenomena, in \cite{djokbolyai} it is shown that there are infinitely many amalgams of degree $(k,2)$ whenever $k$ is composite, and in the edge-transitive case, infinitely many amalgams of degree $(m,n)$ whenever one of $m$ or $n$ is composite, see \cite[7.13]{basskulkarni}. We point the reader to  \cite{sami1} and \cite{sami2}  to see that there is some hope in the composite case however. It is conjectured in \cite{djokbolyai} that there are finitely many finite, primitive amalgams of degree $(p,q)$ where $p$ and $q$ are primes. Our result not only confirms this for $p=5$, $q=2$, but also enumerates the possible structures.

Returning to the case of symmetric quintic graphs, Weiss in \cite{Weiss-pres} considered the cases of highly transitive action. Recently, Zhou and Feng  found the isomorphism types of the vertex stabiliser \cite{zhoufeng}, under the assumption this group is soluble. It was shown that there are six isomorphism types of vertex stabiliser here, and that $s\in [1,3]$. The isomorphism type of the edge stabiliser and the isomorphism types of the  amalgams were not determined however, we consider this problem in Section \ref{sec:gxy1=1sol}. In the non-soluble case we find 7 primitive amalgams when $\vst{xy}{1}$ (to be defined) is trivial. When $\vst{xy}{1}\neq 1$ we show that $s\geq 4$ and  use \cite{Weiss-pres} to determine the amalgam.

 The amalgams of Theorem \ref{thm:bigthm} fall into 5 families depending on the value of $s$. In contrast to  \cite{djokmiller} we find that there is a unique family which contains a single member and there are many types of amalgam where $\ga$ does not split over $\gxy$. For the definitions of $\mathrm{L}_1$ and $\mathrm{L_2}$ see Lemma \ref{lem:defnofl1l2}, the definitions of $\mathrm{N}_{16}$ and $\mathrm{M}_{16}$ appear before Lemma \ref{lem:edge16} and we define a group isomorphic to $\sym{m}\foo{scale=.15}\sym{n}$ at the end of this introduction.

By \cite[Theorem 1, pg.209]{djokmiller} every finite symmetric quintic graph can be obtained from a graph such as $\gam_G(\mathcal{A})$ where $\mathcal{A}$ is a primitive amalgam of degree (5,2) and $G=\gx *_{\gxy}\ga$, the free amalgamated product of $G_x$ and $\ga$ over $G_{xy}$. Although the theorem is stated  for cubic graphs, it readily generalises to other regular graphs. Specifically, given a normal subgroup $N$ of finite index with $N\cap (\gx \cup \ga)=1$ we pass to the (finite) group $G/N$ and we see this is also a completion of $\mathcal{A}$. Then $\gam_{G/N}(\mathcal{A})$ is a finite graph.  This method is exploited in \cite{conderlorimer} to give answers to some of the open problems from \cite{djokmiller} and in \cite{condernedela} to give the full list of symmetric cubic graphs with at most 768 vertices. To do this a presentation for the group $\gx *_{\gxy}\ga$ is needed, thus Theorem \ref{thm:bigthm2} is the first step towards such a result for  symmetric quintic graphs.

A related area of interest is the study of distance transitive graphs. These are graphs $\gam$ such that whenever $x,y,u,v\in \gam$ are such that $\mathrm{d}(x,y)=\mathrm{d}(u,v)$ there is $g\in\aut{\gam}$ with $x^g=u$ and $y^g=v$. Note that every distance transitive graph is $\aut{\gam}$-symmetric, but the converse is not true in general. The distance transitive graphs of valency five have been classified by Gardiner and Praeger \cite{gardpraeg}. There are precisely fourteen such graphs and  each can be obtained as $\gam_G(\mathcal{A})$ for some amalgam $\mathcal{A}$ appearing in Table \ref{table:bigtable} and some completion $G$ of $\mathcal{A}$.

In the remainder of this introduction we review  notation used when a group acts on a graph and outline the structure of this paper.  All graphs in this paper are connected, without loops and without multiple edges and every action of a group is faithful and locally finite.  Let  $\mathrm{d}(\ ,\ )$ be the usual distance metric on $\gam$. We let $\del^{[i]}(x)=\{y \in \gam \mid \mathrm{d}(x,y) \leq i \}$ and write $\del(x)=\del^{[1]}(x)$, the neighbourhood of $x$.  If a group $G$ acts on $\gam$ and $x$ is a vertex of $\gam$, we define the subgroup 
\[
\begin{split}
\vst{x}{i}= \bigcap_{ u \in \del^{[i]}(x) } G_u.
\end{split}
\] Note that $\vst{x}{i}\nml \vstx{j}$ whenever $i\geq j$. If $(x_0,x_1,\dots,x_s)$ is an $s$-arc of $\gam$ and $i\in \field{N}$ we write $\vst{x_0x_1\dots x_s}{i}$ for $\vst{x_0}{i}\cap \vst{x_1}{i} \cap \dots \cap \vst{x_s}{i}$ (although if $i=0$ we ignore the superscript).  In  a quintic graph for example, if $e=\{x,y\}$ is an edge then the group $\vst{xy}{1}$ is the subgroup of $G$ which fixes every vertex of the following subgraph of $\gam$.

\begin{center}
\begin{tikzpicture}

\node [draw,circle,fill=black,minimum size=2pt,inner sep=0pt] (delta) at (0,0) {};
\node [draw,circle,fill=black,minimum size=2pt,inner sep=0pt] (alpha) at (2,0) {};

\draw [-] (delta) -- (alpha);

\node [draw,circle,fill=black,minimum size=2pt,inner sep=0pt] (tau+1) at (2.9,1) {};
\node [draw,circle,fill=black,minimum size=2pt,inner sep=0pt] (tau+2) at (3.5,0.4) {};
\node [draw,circle,fill=black,minimum size=2pt,inner sep=0pt] (tau+3) at (2.9,-1) {};
\node [draw,circle,fill=black,minimum size=2pt,inner sep=0pt] (tau+4) at (3.5,-0.4) {};
\draw  (tau+1)--(alpha)--(tau+2);
\draw  (tau+3)--(alpha)--(tau+4);

\node [draw,circle,fill=black,minimum size=2pt,inner sep=0pt] (delta+1) at (-0.9,1) {};
\node [draw,circle,fill=black,minimum size=2pt,inner sep=0pt] (delta+2) at (-1.5,0.4) {};
\node [draw,circle,fill=black,minimum size=2pt,inner sep=0pt] (delta+3) at (-0.9,-1) {};
\node [draw,circle,fill=black,minimum size=2pt,inner sep=0pt] (delta+4) at (-1.5,-0.4) {};
\draw  (delta+1)--(delta)--(delta+2);
\draw  (delta+3)--(delta)--(delta+4);

\node (alphalab) at (0.2,0.3) {$x$};
\node (betalab) at (1.8,0.3) {$y$};

\node (gammalab) at (1,-0.25) {$e$};

\end{tikzpicture}
\end{center}
This paper is organised as follows. In Section \ref{sec:pre} we recall some results that we will use in later sections and establish the setup in which we will work for the remainder of the paper. We bound the value of $s$ in Section \ref{sec:valofs} and distinguish between the cases $\vst{xy}{1}=1$ and $s\geq 4$. The isomorphism type of the amalgam in the former case is then determined in Sections \ref{sec:gxy1=1sol} and \ref{sec:gxy1=1nonsol}, whilst in the latter case we appeal to results of Weiss \cite{Weiss-pres}.  Finally in Section \ref{sec:uniandpres} we prove the uniqueness of the primitive amalgams we have found and give presentations for the universal completions of these amalgams. Our notation for groups and group extensions is hopefully self-explanatory. A possible exception   is the notation $\sym{n}  \foo{scale=.15} \sym{m}$ where $n,m\in\field{N}$ are both at least 3. This is the unique index two subgroup of $\sym{n}\times\sym{m}$ which does not have a direct factor isomorphic to $\sym{n}$ or $\sym{m}$. For example, $\sym{3} \foo{scale=.15} \sym{3} \cong \grp{(1,2,3),(4,5,6),(2,3)(4,5)}$.

\section*{Acknowledgement}
This work is part of the author's PhD thesis. The author is sincerely grateful to Prof. C. W. Parker's for guidance and advice. The author is also grateful for support from EPSRC.

\section{Preliminaries}
\label{sec:pre}

Recall from the introduction that an amalgam is a 5-tuple $\mathcal{A}=(A_1,A_2,B,\pi_1,\pi_2)$ of three groups $A_1$, $A_2$, and $B$ and two monomorphisms $\pi_1$, $\pi_2$. We say two amalgams $\mathcal{A}$ and $\mathcal{B}=(C_1,C_2,D,\rho_1,\rho_2)$ are of the same \emph{type} if there are isomorphisms  $\alpha : A_1 \rightarrow C_1$, $\beta : A_2 \rightarrow C_2$, $\gamma :B \rightarrow D$ such that $\mathrm{im}(\alpha\pi_1)=\mathrm{im}(\rho_1\gamma)$ and $\mathrm{im}(\beta\pi_2)=\mathrm{im}(\rho_1\gamma)$. We may then denote the type by the triple $(A_1,A_2,B)$, provided the subgroup $B$ of $A_1$ and $A_2$ to which we refer is clear. Additionally, we say that the two amalgams $\mathcal{A}$ and $\mathcal{B}$ of the same type are \emph{isomorphic} provided the maps $\alpha$, $\beta$ and $\gamma$ can be chosen so that the following diagram commutes.

\begin{center}
\begin{tikzpicture}
\node (a1) at (0,0) {$A_1$};
\node (c) at (2,0) {$B$};
\node (a2) at (4,0) {$A_2$};
\node (b1) at (0,-2) {$C_1$};
\node (b2) at (4,-2) {$C_2$};
\node (d) at (2,-2) {$D$};

\draw [->] (c) to node [above] {$\pi_1$} (a1);
\draw [->] (c) to node [above] {$\pi_2$} (a2);
\draw [->] (c) to node [left] {$\gamma$} (d);
\draw [->] (d) to node [below] {$\rho_1$} (b1);
\draw [->] (d) to node [below] {$\rho_2$} (b2);
\draw [->] (a1) to node [left] {$\alpha$} (b1);
\draw [->] (a2) to node [left] {$\beta$} (b2);

\end{tikzpicture}
\end{center}
 
Non-isomorphic amalgams should have different properties. The following example illustrates this and will turn up again in Lemma \ref{lem:q31 unique}.

\begin{eg}
\label{eg:example1}
Let $A_1 =\grp{a,b,c}\cong \frtw \times \cyc{4}$ where $a$ has order 5, $b$ and $c$ have order 4, $a^b=a^2$ and $ac=ca$. Let $A_2=\grp{d,e,f}\cong \cyc{4} \wr \cyc{2}$ where $d$ and $e$ have order 4, $f$ has order 2, $de=ed$ and $e^f=d$. Let $B=\grp{g,h}\cong \cyc{4}\times\cyc{4}$ where both $g$ and $h$ have order 4 and commute.

For $i=1,2$ we define maps $\pi_i: B \rightarrow A_i$ by giving the images of the generators of $B$. We set $\pi_1(g)=b$ and $\pi_1(h)=c$, $\pi_2(g)=d$ and $\pi_2(h)=e$. Note then that $\core{A_1}{\pi_1(B)}=\grp{c}$ and $\core{A_2}{\pi_2(B)}=\grp{de}$, so that if $K\leq B$ is such that $\pi_i(K)\nml A_i$ for $i=1,2$, then $K=1$. Hence $\mathcal{A}=(A_1,A_2,B,\pi_1,\pi_2)$ is a primitive amalgam and it is of degree (5,2).

We can obtain non-isomorphic amalgams of the same type by adjusting the definition  (but not the image) of $\pi_2$. We now set $\pi_2(g)=d$ and $\pi_2(h)=de$. Then for $K=\grp{h}$ we see that $\pi_i(K) \leq \zent{A_i}$ for $i=1,2$, hence the amalgam obtained with this change to $\pi_2$ is no longer primitive. We  get a slightly different situation when we set $\pi_2(g)=d$ and $\pi_2(h)=de^{-1}$. Then for $K=\grp{h}$ we have $\pi_1(K)\leq \zent{A_1}$, but $\pi_2(K)\nleq \zent{A_2}$. However $\pi_2(K)\nml A_2$, so this third amalgam is also not primitive. All three amalgams have the same type, but are pairwise non-isomorphic.
\end{eg}

The number of isomorphism classes of amalgams of a fixed type is well understood. For $i=1,2$ we denote by $\norm{\aut{A_i}}{\pi_i(B)}$ and $\cent{\aut{A_i}}{\pi_i(B)}$ the subgroups of $\aut{A_i}$ which respectively normalise and centralise $\pi_i(B)$. We define a map $\pi_i^* : \norm{\aut{A_i}}{\pi_i(B)} \rightarrow \aut{B}$ by $\pi_i^* (\alpha) : x \mapsto \pi_i^{-1}\alpha\pi_i(x)$ for $x\in B$. This is a homomorphism with kernel $\cent{\aut{A_i}}{\pi_i(B)}$. For $i=1,2$ we set $A_i^*=\pi_i^*(A_i)$. Using this notation, we state the amalgam counting lemma of Goldschmidt.

\begin{lem}[Goldschmidt's Lemma]
\label{lem:golds lem}
There is a bijection between the isomorphism classes of amalgams of type $(A_1,A_2,B)$ and the $(A_1^*,A_2^*)$-double cosets in $\aut{B}$.
\begin{proof}
See \cite[Lemma 2.7]{Gold1}.
\end{proof}
\end{lem}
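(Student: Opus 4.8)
The plan is to turn the classification of isomorphism classes into a bookkeeping problem inside $\aut{B}$, using the given amalgam $\mathcal{A}=(A_1,A_2,B,\pi_1,\pi_2)$ as the reference point. The first step is to normalise the representatives. Given any amalgam $\mathcal{B}=(C_1,C_2,D,\rho_1,\rho_2)$ of the same type, transporting it back along the type isomorphisms $\alpha,\beta,\gamma$ produces an isomorphic amalgam on the groups $A_1,A_2,B$ whose embeddings $\rho_i'=\alpha^{-1}\rho_i\gamma$ have images exactly $\pi_1(B)$ and $\pi_2(B)$: indeed $\mathrm{im}(\rho_1')=\alpha^{-1}(\mathrm{im}(\rho_1\gamma))=\alpha^{-1}(\mathrm{im}(\alpha\pi_1))=\pi_1(B)$, and similarly for $i=2$. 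Once the image of each embedding is fixed to $\pi_i(B)$, any such embedding differs from $\pi_i$ by a unique automorphism of $B$, namely $\theta_i=\pi_i^{-1}\rho_i'\in\aut{B}$, so that $\rho_i'=\pi_i\theta_i$. Thus, after the preliminary normalisation, amalgams of the type are parametrised by pairs $(\theta_1,\theta_2)\in\aut{B}\times\aut{B}$, with $\mathcal{A}$ itself corresponding to $(1,1)$.

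Next I would determine exactly when two pairs $(\theta_1,\theta_2)$ and $(\theta_1',\theta_2')$ give isomorphic amalgams. An isomorphism consists of $\alpha\in\aut{A_1}$, $\beta\in\aut{A_2}$, $\gamma\in\aut{B}$ making the square commute, i.e. $\alpha\pi_1\theta_1=\pi_1\theta_1'\gamma$ and $\beta\pi_2\theta_2=\pi_2\theta_2'\gamma$. Taking images in these identities gives $\alpha(\pi_1(B))=\pi_1(B)$ and $\beta(\pi_2(B))=\pi_2(B)$, so automatically $\alpha\in\norm{\aut{A_1}}{\pi_1(B)}$ and $\beta\in\norm{\aut{A_2}}{\pi_2(B)}$ — no extra hypothesis on the isomorphism is required. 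Writing $a=\pi_1^*(\alpha)=\pi_1^{-1}\alpha\pi_1\in A_1^*$ and $b=\pi_2^*(\beta)=\pi_2^{-1}\beta\pi_2\in A_2^*$, the two relations become $a\theta_1=\theta_1'\gamma$ and $b\theta_2=\theta_2'\gamma$ in $\aut{B}$. Eliminating $\gamma$ yields $(\theta_1')^{-1}a\theta_1=(\theta_2')^{-1}b\theta_2$, which rearranges to $\theta_2\theta_1^{-1}=b^{-1}\bigl(\theta_2'(\theta_1')^{-1}\bigr)a$.

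This is the crux. Setting $\psi=\theta_2\theta_1^{-1}$ and $\psi'=\theta_2'(\theta_1')^{-1}$, the displayed identity says precisely that the two amalgams are isomorphic if and only if $\psi=b^{-1}\psi'a$ for some $a\in A_1^*$ and $b\in A_2^*$, that is, if and only if $\psi$ and $\psi'$ lie in the same $(A_2^*,A_1^*)$-double coset of $\aut{B}$; the steps are reversible, so any such $a,b$ do produce a genuine amalgam isomorphism by reading the equations backwards to recover $\alpha,\beta,\gamma$. The assignment $(\theta_1,\theta_2)\mapsto\theta_2\theta_1^{-1}$ is surjective onto $\aut{B}$ (take $\theta_1=1$), so it descends to a well-defined bijection from the isomorphism classes of amalgams of the type onto the $(A_2^*,A_1^*)$-double cosets in $\aut{B}$; inverting $\psi\mapsto\psi^{-1}$ (equivalently, relabelling $A_1\leftrightarrow A_2$) interchanges $(A_2^*,A_1^*)$- and $(A_1^*,A_2^*)$-double cosets and gives the statement as written.

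I expect the only real obstacle to be careful bookkeeping of left-versus-right multiplication and inverses when eliminating $\gamma$, since it is easy to land on a one-sided coset or the reversed double coset. The facts that each $\pi_i^*$ is a homomorphism with kernel $\cent{\aut{A_i}}{\pi_i(B)}$, already recorded above, guarantee that $A_1^*$ and $A_2^*$ are genuine subgroups of $\aut{B}$, so that the notion of $(A_1^*,A_2^*)$-double coset is meaningful; granting this, the argument is entirely formal.
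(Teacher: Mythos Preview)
Your argument is correct and is essentially the standard proof of Goldschmidt's Lemma. The paper, however, does not actually prove the statement: it simply cites \cite[Lemma 2.7]{Gold1}, so there is no proof in the paper against which to compare yours in detail.

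That said, what you have written is precisely the argument one finds in Goldschmidt's original paper and in later expositions: normalise every amalgam of the given type to one of the form $(A_1,A_2,B,\pi_1\theta_1,\pi_2\theta_2)$, translate the isomorphism condition into the equations $a\theta_1=\theta_1'\gamma$ and $b\theta_2=\theta_2'\gamma$ in $\aut{B}$, and eliminate $\gamma$ to obtain the double-coset relation on $\psi=\theta_2\theta_1^{-1}$. Your observation that $\alpha,\beta$ are automatically forced into $\norm{\aut{A_i}}{\pi_i(B)}$ by comparing images is the right way to see that no hypothesis is lost in passing to $A_i^*$, and your remark about swapping $(A_1^*,A_2^*)$ versus $(A_2^*,A_1^*)$ via inversion is a harmless convention issue. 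There is nothing to correct.
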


In our applications of Goldschmidt's Lemma we will continue to use the notation $A_1^*$ and $A_2^*$, though we may have different names for the groups involved in the amalgam and may deal with multiple amalgams in which the group $B$ features at the same time.

A \emph{faithful completion} of the amalgam $\mathcal{A}$ is a triple $(G,\theta_1,\theta_2)$ where $G$ is a group and $\theta_i$ are  monomorphisms $\theta_i : A_i \rightarrow G$ ($i=1,2$) such that $G=\grp{\theta_1(A_1),\theta_2(A_2)}$ and for all $b\in B$ we have $\theta_1(\pi_1(b))=\theta_2(\pi_2(b))$. A universal completion of $\mathcal{A}$ is a completion $(G,\theta_1,\theta_2)$ such that whenever $(H,\mu_1,\mu_2)$ is also a completion of $\mathcal{A}$, there is a unique map $\kappa : G \rightarrow H$ such that $\kappa\theta_i=\mu_i$ for $i=1,2$. The free amalgamated product of $A_1$ and $A_2$ over $B$, denoted $A_1 *_B A_2$ is a universal completion of $\mathcal{A}$, but finite completions always exist. We sometimes omit the maps from our statements if they are clear from the context or if we have identified $B$ with $\pi_1(B)$ and $\pi_2(B)$.

From a primitive amalgam $\mathcal{A}=(A_1,A_2,B,\pi_1,\pi_2)$ of degree $(k,2)$ and a completion $G$ of $\mathcal{A}$ we construct the graph $\gam_G(\mathcal{A})$ as follows.  We take as vertices the right cosets of $A_1$ in $G$ and say that two cosets $A_1 g$ and  $A_1 h$ are adjacent whenever $gh^{-1}\in A_1 a A_1$ for some $a\in A_2 - B$. One needs to check that $\gam_G(\mathcal{A})$ is well-defined and that the definition does not depend on the choice of $a\in A_2 - B$. (To our knowledge, this construction is due to \cite{miller}). We let $G$  act on $\gam=\gam_G(\mathcal{A})$ by $h : A_1 g \mapsto A_1 gh$. With this action we see $\gam$ is $G$-symmetric and if $A_1\cap A_1^a=B$ then $\gam$ is regular of valency $k$.  The vertex stabilisers in this action are conjugate to $A_1$ and the edge stabilisers to $A_2$. 

Now let $\gam$ be a $G$-symmetric graph (with locally finite action) and pick an edge $e=\{x,y\}$. Let $\pi_x: G_{xy} \rightarrow G_x$ and $\pi_e:G_{xy}\rightarrow G_e$ be the identity embeddings. Set $\mathcal{A}=(\gx$, $\ga$, $\gxy$, $\pi_x$, $\pi_e)$. Then $\mathcal{A}$  is a finite, primitive amalgam. We define $\theta : \gam_G(\mathcal{A}) \rightarrow \gam$   by $\theta:\gx g \mapsto x^g$. We leave the reader to verify that this is an isomorphism of graphs and that $\theta$ commutes with the action of $G$. Thus there is an equivalence between the study of $G$-symmetric graphs of valency $k$ and amalgams of degree $(k,2)$.

From now on, we let $\mathcal{A}=(\gx,\ga,\gxy,\pi_x,\pi_e)$ be a finite, primitive amalgam of index $(p,2)$, $p$ an odd prime, and let $G=\gx *_{\gxy}\ga$ be the universal completion of $\mathcal{A}$. Let $\gam=\gam_G(\mathcal{A})$ and identify $\gx$, $\ga$ and $\gxy$ with their images in $G$. We summarise the relevant properties of $\gam$ below.

\begin{prop}
\label{prop:fund prop of gam}
The following hold.
\begin{itemize}
\item[(i)] The graph $\gam$ is the  $p$-valent tree.
\item[(ii)] If $K\leq \gxy$ and both $\norm{\gx}{K}$ and $\norm{\ga}{K}$ are transitive on $\del(x)$ and $\{x,y\}$ respectively, $K=1$.
\item[(iii)] The graph $\gam$ is $G$-symmetric and the action of $G$ is locally finite.
\item[(iv)] The subgroup of $G$ fixing a vertex, respectively, edge of $\gam$ is  $G$-conjugate to $\gx$, respectively, $\ga$.
\end{itemize}
\begin{proof}
The results are well known. Part (i) is essentially \cite[pg.32]{serre}. Part (ii) follows from primitivity of the amalgam. The first part of  (iii) follows from (ii) and the second part together with (iv) follow from the definition of $\gam$.
\end{proof}
\end{prop}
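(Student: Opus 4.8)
The statement to prove is Proposition~\ref{prop:fund prop of gam}, which collects four fundamental facts about the graph $\gam=\gam_G(\mathcal{A})$ built from the universal completion $G=\gx *_{\gxy}\ga$ of a primitive amalgam of degree $(p,2)$. The author has already signalled that these are standard, so the plan is to assemble the pieces rather than prove anything from scratch.

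\medskip

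The plan is to treat the four parts in the order (i), (iv), (iii), (ii), since (iv) is really the backbone and makes the others quick. First I would establish (i) by invoking the theory of groups acting on trees. Because $G$ is the \emph{free} amalgamated product $\gx *_{\gxy}\ga$ (the universal completion), the Bass--Serre theory gives that $G$ acts on its associated Bass--Serre tree with vertex stabilisers conjugate to $\gx$ and $\ga$ and fundamental domain a single edge; this is exactly the content of the cited \cite[pg.32]{serre}. Identifying this Bass--Serre tree with the coset graph $\gam_G(\mathcal{A})$, I would check that the valency at the vertex $\gx\cdot 1$ equals $|\gx:\gxy|=p$, using the degree-$(p,2)$ hypothesis and the condition $\gx\cap\gx^a=\gxy$ noted earlier in the excerpt, so that every vertex has valency $p$ and the graph is connected, acyclic and hence the $p$-valent tree.

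\medskip

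For (iv) I would quote the general principle, already described in the construction preceding the proposition, that in $\gam_G(\mathcal{A})$ the vertex stabilisers are $G$-conjugate to $\gx$ and the edge stabilisers to $\ga$; this follows directly from the definition of $\gam$ via right cosets of $\gx$ together with the given $G$-action $h:\gx g\mapsto \gx gh$, whose point stabiliser at $\gx\cdot 1$ is precisely $\gx$. Part (iii) then splits as the author indicates: symmetry (transitivity on ordered adjacent pairs) will follow from (ii) once that is in hand, while local finiteness is immediate because $\gx$ and $\ga$ are finite groups and every stabiliser is conjugate to one of these by (iv). I would phrase symmetry concretely by showing $G$ is transitive on vertices (since all of $\gam$ is a single $G$-orbit of cosets, the completion generating $G$) and that the neighbour stabiliser $\gx$ is transitive on $\del(x)$, which combined give transitivity on directed edges.

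\medskip

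The one step carrying real mathematical content is (ii), and this is where I expect the main obstacle. Here I must deduce from primitivity of $\mathcal{A}$ that any $K\leq\gxy$ with $\norm{\gx}{K}$ transitive on $\del(x)$ and $\norm{\ga}{K}$ transitive on $\{x,y\}$ is trivial. The key idea is that transitivity of $\norm{\gx}{K}$ on $\del(x)=\{x^g:g\}$ together with $K\nml\norm{\gx}{K}$ forces $K$ to fix, and in fact act the same way, on the whole neighbourhood, and similarly $\norm{\ga}{K}$ swapping $x$ and $y$ forces $K^{\ga}=K$ at the edge level; pushing this around the tree shows $K$ is normalised by enough conjugates that $\grp{K^g:g}\cap\gxy=K$ is normal in both $\gx$ and $\ga$. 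By primitivity the only such subgroup of $\gxy$ is trivial, so $K=1$. The delicate point is verifying that the two transitivity hypotheses really do upgrade $\norm{\gx}{K}$- and $\norm{\ga}{K}$-normality of $K$ into $\gx$- and $\ga$-normality, which is precisely the bridge to the primitivity condition; I would make this rigorous by a short argument on the tree, using that $\gx$ is generated by $\gxy$ together with elements moving $y$ to its other neighbours of $x$, all of which lie in $\norm{\gx}{K}$ under the transitivity assumption.
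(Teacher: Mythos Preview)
Your outline for (i), (iii), (iv) is fine and matches the paper. The gap is in (ii). Your proposed ``rigorous'' step---generating $\gx$ by $\gxy$ together with elements moving $y$ around $\del(x)$, all chosen in $\norm{\gx}{K}$---does give $\gx=\gxy\cdot\norm{\gx}{K}$, but deducing $K\nml\gx$ from this would still require $K\nml\gxy$, which is not assumed. Indeed the very next lemma in the paper applies (ii) with $K$ a Sylow subgroup of $\gxy$, which is certainly not normal in $\gxy$ in general, so this route cannot succeed.

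What does work is the tree argument you allude to earlier but then set aside. Let $H=\grp{\norm{\gx}{K},\norm{\ga}{K}}$; this subgroup normalises $K$. The two transitivity hypotheses, together with connectedness of $\gam$, force $H$ to be vertex-transitive (since $H_x$ is transitive on $\del(x)$ and $H$ contains an element swapping $x$ and $y$, one walks outward by induction). Because $K$ fixes $x$ and $K=K^h$ for every $h\in H$, $K$ fixes every vertex of $\gam$ and hence lies in the kernel of the $G$-action. That kernel is a subgroup of $\gxy$ normal in both $\gx$ and $\ga$, so primitivity gives $K=1$. The point is that one never proves $K\nml\gx$; one only shows $K$ lies in the core of the action, which is the weaker statement that primitivity actually kills.
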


We call upon part (ii) of the above proposition frequently in our arguments. Since it is obvious in its application, we shall usually suppress reference. 

\begin{lem}
Suppose that $q$ is a prime with $q\mid |\gxy|$. Then $q<p$.
\begin{proof}
Let $q$ be a prime with $q\geq p$ and pick $S\in\syl{q}{\gxy}$. Since $\gxy/\vstx{1}$ is a point stabiliser of $\gx/\vstx{1}$ which is embedded in $\sym{p}$, $q$ cannot divide $|\gxy/\vstx{1}|$ and so $S\leq \vstx{1}$. Hence $S\in\syl{q}{\vstx{1}}$. The Frattini argument now yields $\gx=\norm{\gx}{S}\vstx{1}$ and $\ga=\norm{\ga}{S}\gxy$. In particular, $\norm{\gx}{S}$ and $\norm{\ga}{S}$ are transitive on $\del(x)$ and $\{x,y\}$ respectively,  so $S=1$.
\end{proof}
\end{lem}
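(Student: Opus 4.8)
The plan is to prove the contrapositive: fix a prime $q$ with $q\geq p$ and show that a Sylow $q$-subgroup $S$ of $\gxy$ is trivial, so that $q\nmid|\gxy|$. The engine of the argument is the combination of two facts: the local action of $\gx$ on $\del(x)$ is a transitive permutation group of prime degree $p$, and Proposition~\ref{prop:fund prop of gam}(ii), which forces a suitably normalised subgroup of $\gxy$ to be trivial.

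First I would locate $S$ relative to $\vstx{1}$. The quotient $\gx/\vstx{1}$ embeds as a transitive subgroup of $\sym{p}$ --- transitive because $\gam$ is $G$-symmetric, faithful by the definition of $\vstx{1}$ --- so its point stabiliser $\gxy/\vstx{1}$ has order dividing $(p-1)!$. As $q\geq p$, the prime $q$ divides no factor of $(p-1)!$, hence $q\nmid|\gxy/\vstx{1}|$. This forces the image of $S$ in $\gx/\vstx{1}$ to be trivial, i.e. $S\leq\vstx{1}$ (using $\vstx{1}\leq\gxy$, which holds since $\vstx{1}$ fixes $y\in\del(x)$). Since moreover $q\nmid|\gxy:\vstx{1}|$, we have $|\vstx{1}|_q=|\gxy|_q=|S|$, so $S$ is simultaneously a Sylow $q$-subgroup of $\vstx{1}$ and of $\gxy$.

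Then I would run the Frattini argument twice. From $\vstx{1}\nml\gx$ and $S\in\syl{q}{\vstx{1}}$ we get $\gx=\norm{\gx}{S}\vstx{1}$; from $\gxy\nml\ga$ (index $2$) and $S\in\syl{q}{\gxy}$ we get $\ga=\norm{\ga}{S}\gxy$. Because $\vstx{1}$ acts trivially on $\del(x)$ and $\gxy$ fixes both $x$ and $y$, the normaliser $\norm{\gx}{S}$ inherits the transitive action of $\gx$ on $\del(x)$, and $\norm{\ga}{S}$ inherits the swap of $\{x,y\}$ supplied by $\ga$. Proposition~\ref{prop:fund prop of gam}(ii) applied with $K=S$ then gives $S=1$, as required.

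The step I expect to need the most care is not a deep obstacle but the bookkeeping that $S$ is Sylow in $\vstx{1}$ and in $\gxy$ at the same time --- both are needed for the two Frattini factorisations, and it is precisely the prime-degree hypothesis, via $q\nmid|\gxy:\vstx{1}|$, that makes the two Sylow roles coincide. The only properties of $p$ being prime that enter are that $p$ divides $|\gx/\vstx{1}|$ (transitivity) and that every prime divisor of $(p-1)!$ is strictly less than $p$.
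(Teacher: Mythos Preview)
Your proof is correct and follows the same route as the paper's: show $S\leq\vstx{1}$ via the point-stabiliser argument in $\sym{p}$, then apply the Frattini argument to $\vstx{1}\nml\gx$ and $\gxy\nml\ga$, and conclude by Proposition~\ref{prop:fund prop of gam}(ii). The only difference is that you spell out more carefully why $S$ is Sylow in both $\vstx{1}$ and $\gxy$ and why the normalisers inherit transitivity, which is fine.
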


Taking $p=3$, the above result implies that $|\gxy|=2^a$ for some $a\in \field{N}$ and that $|\gx|=2^a\cdot3$, which is the situation considered in \cite{djokmiller}. We now leave the general situation and fix $p=5$. We see that $|\gxy|=2^a\cdot3^b\cdot5$ for some $a,b\in\field{N}$. In particular, $\gxy$, $\ga$ and $\vstx{1}$ are soluble groups. The following proposition will be used to make a case division between Sections \ref{sec:gxy1=1sol} and \ref{sec:gxy1=1nonsol}. 

\begin{prop}
\label{prop:solofgx}
The group $\gx$ is soluble if and only if $\gx/\vstx{1}$ is soluble.
\begin{proof}
This is \cite[Proposition 4]{djokbolyai}.
\end{proof}
\end{prop}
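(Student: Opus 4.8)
The plan is to read the statement as an assertion about the two-step filtration $1\to\vstx{1}\to\gx\to\gx/\vstx{1}\to 1$ and to reduce everything to the solubility of the normal subgroup $\vstx{1}$. The forward implication is immediate: solubility is inherited by quotient groups, so if $\gx$ is soluble then so is $\gx/\vstx{1}$. For the converse I would invoke the standard fact that an extension of a soluble group by a soluble group is again soluble; thus, granting that $\gx/\vstx{1}$ is soluble, it suffices to prove that the kernel $\vstx{1}$ is soluble. Note that $\vstx{1}\nml\gx$ holds since $\vstx{1}=\vst{x}{1}\nml\vst{x}{0}=\gx$.

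The key step is therefore to establish solubility of $\vstx{1}$, and here I would exploit the order bound already available. Since $\vstx{1}$ fixes $x$ together with every vertex of $\del(x)$, and $y\in\del(x)$, we have $\vstx{1}\leq\gxy$. By the preceding lemma every prime dividing $|\gxy|$ is strictly less than $p=5$, so $|\gxy|=2^a 3^b$ for some $a,b\in\field{N}$. Burnside's $p^a q^b$ theorem then shows that $\gxy$ is soluble, and hence so is its subgroup $\vstx{1}$. (Alternatively one may simply quote the remark immediately preceding the proposition, where the solubility of $\vstx{1}$ is recorded.) Feeding this back into the extension argument completes the reverse implication.

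I do not expect a genuine obstacle in the present setting: once the valency is fixed at $5$, the order bound forces $\gxy$, and with it $\vstx{1}$, into the range governed by Burnside's theorem, so the proposition becomes a bookkeeping consequence of the filtration of $\gx$ by $\vstx{1}$. The only points requiring care are the trivial containment $\vstx{1}\leq\gxy$ and the normality $\vstx{1}\nml\gx$, both routine. The real substance of the general statement \cite[Proposition 4]{djokbolyai}, namely solubility of $\vstx{1}$ for an arbitrary odd prime valency $p$ where Burnside's theorem no longer applies and one must appeal to the detailed structure theory of vertex stabilisers, is precisely what is sidestepped here by the factorisation $|\gxy|=2^a 3^b$.
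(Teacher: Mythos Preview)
Your argument is correct. The forward direction is trivial, and for the converse you rightly observe that $\vstx{1}\leq\gxy$, invoke the preceding lemma to get $|\gxy|=2^a3^b$, and conclude via Burnside's $p^aq^b$ theorem that $\vstx{1}$ is soluble; the extension argument then finishes the job.

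The paper does not actually argue the proposition: it simply cites \cite[Proposition~4]{djokbolyai}, which handles the case of arbitrary odd prime valency. Your route is genuinely different and more elementary, exploiting the accident that for $p=5$ the order of $\gxy$ involves only two primes, so Burnside applies directly. Indeed, as you note, the paper has already recorded the solubility of $\vstx{1}$ in the sentence immediately preceding the proposition, so in the context of this paper the citation is somewhat redundant and your direct argument would serve just as well. Your closing remark is also on point: the cited result earns its keep for general $p$, where $|\gxy|$ may involve three or more primes and Burnside no longer suffices; here that difficulty simply does not arise.
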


Knowing that $\gx$ acts transitively on $\del(x)$ which has order 5 allows us to determine the possible isomorphism type of $\gx/\vstx{1}$. In particular, we can conclude  $\gx/\vstx{1}$ either contains a normal cyclic subgroup of order 5 or a normal subgroup isomorphic to $\alt{5}$.

\begin{lem}
\label{lem:subgpsofs5}
Suppose that $H$ is a transitive subgroup of $\sym{5}$ acting on $5$-points. Then $H$ is isomorphic to one of the following groups: $\cyc{5}$, $\dih{10}$, $\frtw$, $\alt{5}$ or $\sym{5}$.
\begin{proof}
This is an easy calculation in $\sym{5}$.
\end{proof}
\end{lem}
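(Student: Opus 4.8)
The plan is to use Sylow theory to pin down a Sylow $5$-subgroup of $H$ and then split into two cases according to whether it is normal. Since $H$ is transitive on $5$ points, the orbit--stabiliser theorem gives $5\mid |H|$, so by Cauchy's theorem $H$ contains an element of order $5$; as the only elements of order $5$ in $\sym{5}$ are $5$-cycles, any $P\in\syl{5}{H}$ is generated by a $5$-cycle and has order $5$. By Sylow's theorems $n_5:=|H:\norm{H}{P}|$ satisfies $n_5\equiv 1 \pmod 5$ and $n_5$ divides $|H|/5$, which in turn divides $|\sym{5}|/5=24$ by Lagrange; hence $n_5\in\{1,6\}$.

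First I would treat the case $n_5=1$, i.e. $P\nml H$. Then every element of $H$ normalises $P$, so $H\leq \norm{\sym{5}}{P}$, and a direct check shows $\norm{\sym{5}}{P}\cong\frtw$ is the Frobenius group of order $20$, realising the full automorphism group $\aut{\cyc{5}}\cong\cyc{4}$ of $P$. The subgroups of $\frtw$ containing the normal subgroup $P$ correspond to the subgroups of the quotient $\frtw/P\cong\cyc{4}$, namely those of orders $1$, $2$, $4$; these give $H$ of order $5$, $10$, or $20$, i.e. $H\cong\cyc{5}$, $\dih{10}$, or $\frtw$. Here the only point needing care is that the order-$10$ subgroup is dihedral rather than cyclic: the involution in $\cyc{4}\cong\aut{\cyc{5}}$ acts on $P$ by inversion, so $P\rtimes\cyc{2}$ is $\dih{10}$.

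For the remaining case $n_5=6$, distinct subgroups of prime order $5$ meet trivially, so the six Sylow $5$-subgroups contribute $6\cdot(5-1)=24$ distinct elements of order $5$; since $\sym{5}$ has exactly $5!/5=24$ five-cycles, $H$ must contain \emph{every} $5$-cycle of $\sym{5}$. As the set of all $5$-cycles consists of even permutations and is closed under conjugation, it generates a nontrivial normal subgroup of the simple group $\alt{5}$, which must therefore be all of $\alt{5}$. Thus $\alt{5}\leq H\leq\sym{5}$, whence $H\cong\alt{5}$ or $H\cong\sym{5}$.

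The argument is essentially bookkeeping, and I do not expect a genuine obstacle; the only mildly delicate points are computing $\norm{\sym{5}}{P}\cong\frtw$ and recognising the order-$10$ subgroup as $\dih{10}$. The observation that $n_5=6$ forces all $24$ five-cycles into $H$ is what lets me bypass any separate analysis of intermediate orders such as $30$, which would otherwise require ruling out nonexistent subgroups of $\alt{5}$.
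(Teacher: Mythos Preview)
Your argument is correct and complete. The paper itself gives no proof beyond the single sentence ``This is an easy calculation in $\sym{5}$'', so there is nothing to compare against; your Sylow-theoretic case split on $n_5\in\{1,6\}$ is a clean and standard way to carry out that calculation explicitly, and the counting trick in the $n_5=6$ case (forcing all $24$ five-cycles into $H$, hence $\alt{5}\leq H$) is a nice way to avoid checking intermediate orders directly.
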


\section{Values of $s$}
\label{sec:valofs}

The graph $\gam$ is the quintic tree by Proposition \ref{prop:fund prop of gam} (i), therefore $\gam$ is bipartite. By $G_0$ we denote the subgroup of $G$ which fixes the parts set-wise. Since $\gam$ is $G$-symmetric, we have $|G:G_0|=2$ and $\gam$ is $G_0$-semisymmetric. Moreover $G_0=\grp{\gx,\gy}$ for the edge $e=\{x,y\}$. The following lemma is surely well known, so we omit the proof.

\begin{lem}
\label{lem:s-trans iff loc s-trans}
Suppose that $G$ acts $s$-transitively  and $G_0$ acts locally $t$-transitively. Then  $s=t$.
\end{lem}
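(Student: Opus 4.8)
We have $G$ acting $s$-transitively on the quintic tree $\gam$, meaning $s$ is the largest integer such that $G$ is transitive on $s$-arcs. The index-two subgroup $G_0 = \langle \gx, \gy\rangle$ fixes each part of the bipartition, and I must show $G_0$ is locally $t$-transitive with $t = s$; that is, the largest $t$ for which every vertex stabiliser $G_v$ (which lies in $G_0$ since it fixes its own part) is transitive on $t$-arcs starting at $v$ equals $s$.

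**Approach.** The plan is to exploit the fact that vertex stabilisers are unaffected by passing from $G$ to $G_0$: since any $G_v$ fixes the vertex $v$, it fixes the part containing $v$, hence $G_v \le G_0$ and in fact $G_v = (G_0)_v$. This is the key observation. First I would recall the standard reduction that $s$-arc-transitivity of $G$ is equivalent to vertex-transitivity together with, for each vertex $v$, transitivity of $G_v$ on the $s$-arcs emanating from $v$. Concretely, any $s$-arc can be mapped to a translate whose initial vertex is $v$ using transitivity on vertices (which $G$ has, being symmetric), and then matched up using $G_v$; conversely transitivity on $s$-arcs from a fixed vertex plus vertex-transitivity gives global $s$-arc-transitivity. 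I would state this standard equivalence and use it in both directions.

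**Key steps.** The argument splits into two inequalities. For $t \ge s$: since $G$ is transitive on $s$-arcs and $\gam$ is a tree (so every $s$-arc sits inside a path with enough room), I would argue that $G_v$ is transitive on the $s$-arcs starting at $v$ — this follows because, given two $s$-arcs from $v$, global transitivity supplies $g\in G$ mapping one to the other, and by composing with the stabiliser one arranges $g$ to fix $v$; since it fixes $v$ it lies in $G_v = (G_0)_v$, giving local $s$-transitivity of $G_0$, so $t \ge s$. For $t \le s$: if $G_0$ were locally $(s{+}1)$-transitive, then each $G_v$ would be transitive on $(s{+}1)$-arcs from $v$; combined with the fact that $G$ (hence $G_0$, up to the final vertex) is transitive on vertices, one upgrades this to global transitivity of $G$ on $(s{+}1)$-arcs. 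The point here is that on a bipartite graph a global $s$-arc maps under $G$ to one with prescribed initial vertex, and local transitivity then matches the rest; crucially $G_0$-vertex-transitivity on each part, together with $G$ swapping the parts, suffices to reposition any $(s{+}1)$-arc. This would contradict maximality of $s$, forcing $t \le s$.

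**Main obstacle.** The genuinely delicate point is the inequality $t \le s$: I must be careful that local transitivity for $G_0$ really does force global transitivity for the larger group $G$. Since $G_0$ acts transitively only within each part, I would use that $G\setminus G_0$ contains an element reversing the edge $e = \{x,y\}$ (by symmetry of the action), so that $G$ is transitive on all vertices, and then combine local transitivity at each endpoint to move any $(s{+}1)$-arc to a standard one. I would need to check the parity bookkeeping: an $s$-arc from a vertex in one part ends in a determined part, and $G_0$ vertex-transitivity within parts plus the edge-reversing element handles both parities. This is the only step requiring real care; the reverse inequality is essentially immediate from the definitions.
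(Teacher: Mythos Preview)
Your argument is correct, and in fact the paper omits the proof entirely, remarking only that the result ``is surely well known.'' So there is nothing to compare against; your write-up would fill the gap.

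One minor simplification: your worry about ``parity bookkeeping'' in the direction $t\le s$ is unnecessary. You have already observed that $(G_0)_v=G_v$ for every vertex $v$. If $(G_0)_v$ is transitive on the $(s{+}1)$-arcs based at $v$, then so is $G_v$; now use the vertex-transitivity of $G$ itself (not of $G_0$) to move the initial vertex of any $(s{+}1)$-arc to a fixed one, and then $G_v$ to finish. No appeal to the bipartition, to an edge-reversing element, or to part-wise transitivity of $G_0$ is needed. The whole lemma reduces to the single observation $(G_0)_v=G_v$ together with the standard fact that, for a vertex-transitive group, global $k$-arc-transitivity is equivalent to transitivity of a vertex stabiliser on $k$-arcs issuing from that vertex.
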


\begin{lem}
\label{lem:gxy1=1}
Suppose that $\vst{xy}{1}\neq 1$ for some edge $\{x,y\}$ of $\gam$. Then $s\geq 4$.
\begin{proof}
We first consider the case where $\gx/\vstx{1}$ is soluble. Lemma \ref{lem:subgpsofs5} shows that $\gx/\vstx{1}$ contains a regular abelian subgroup. Therefore we may apply \cite[Theorem (i)]{Weiss-pfact} which gives $\vst{xy}{1}=1$, a contradiction.

Suppose now that $\gx/\vstx{1}$ is insoluble, and therefore has a normal subgroup isomorphic to $\alt{5}\cong \SL{2}{4}$. We see that $\gx$ acts 2-transitively on $\Delta(x)$ and so $s \geq 2$. By Lemma \ref{lem:s-trans iff loc s-trans}  the group $G_0=\grp{\gx,\gy}$ is locally $s$-transitive, and since $(G_0)_x=G_x$ and $(G_0)_x^{[1]}=\vstx{1}$ we have that $(G_0)_z/(G_0)_z^{[1]}=G_z/\vst{z}{1}$ contains a normal subgroup isomorphic to $\alt{5}$ for each vertex $z$ of $\gam$. Thus we may apply \cite[Theorem 1.1]{WeissBN} which implies $\vstx{1}=1$ if $s=2$ and $\vst{x_0}{1}\cap\vst{x_1}{1}\cap G_{x_3}\cap\dots G_{x_s}=1$ if $s \geq 3$ and $(x_0,\dots,x_s)$ is any $s$-arc. Since $\vst{xy}{1}$ is contained in $G_z$ for any $z \in \del(y)\setminus \{x\}$, we have $s\geq 4$. 
\end{proof}
\end{lem}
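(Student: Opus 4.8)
The natural strategy is to split on the solubility of the local action $\gx/\vstx{1}$, which by Proposition \ref{prop:solofgx} is equivalent to solubility of $\gx$ itself, and then in each case to quote the appropriate structure theorem of Weiss. Since $\gx$ acts transitively on the five vertices of $\del(x)$ with kernel $\vstx{1}$, the quotient $\gx/\vstx{1}$ is a transitive subgroup of $\sym{5}$, so Lemma \ref{lem:subgpsofs5} leaves only the possibilities $\cyc{5}$, $\dih{10}$, $\frtw$ (soluble) and $\alt{5}$, $\sym{5}$ (insoluble). I would treat these two families separately, aiming in the soluble family for an outright contradiction and in the insoluble family for the bound $s\geq 4$.

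For the soluble family I would observe that each of $\cyc{5}$, $\dih{10}$ and $\frtw$ contains a cyclic subgroup of order $5$ acting regularly on the five points, so $\gx/\vstx{1}$ contains a regular abelian subgroup. This is precisely the hypothesis of Weiss's theorem on $p$-factorisations \cite{Weiss-pfact}, whose conclusion forces $\vst{xy}{1}=1$. As this contradicts the standing assumption $\vst{xy}{1}\neq 1$, the soluble family cannot occur at all, and no statement about $s$ is needed there.

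In the insoluble family $\gx/\vstx{1}$ is $\alt{5}$ or $\sym{5}$, each containing a normal copy of $\alt{5}\cong\SL{2}{4}$; since $\alt{5}$ is $2$-transitive on five points, $\gx$ is $2$-transitive on $\del(x)$ and so $s\geq 2$. To access Weiss's local theory I would pass to the index-two subgroup $G_0=\grp{\gx,\gy}$ preserving the two halves of the bipartite tree; by Lemma \ref{lem:s-trans iff loc s-trans} this group is locally $s$-transitive, and by vertex-transitivity the local action $G_z/\vst{z}{1}$ at \emph{every} vertex $z$ again has a normal $\alt{5}$. This is exactly the input required by \cite[Theorem 1.1]{WeissBN}, which I would invoke to obtain the triviality of a prescribed stabiliser attached to each short arc: namely $\vstx{1}=1$ when $s=2$, and, when $s=3$, the vanishing of a suitable intersection of the radius-one kernel of the central edge of the arc with the stabilisers of its two outer vertices.

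I would then finish by eliminating $s=2$ and $s=3$. When $s=2$ the conclusion $\vstx{1}=1$ gives $\vst{xy}{1}\leq\vstx{1}=1$, against hypothesis. When $s=3$ I would take a $3$-arc whose central edge is $\{x,y\}$, say $(u,x,y,w)$ with $u\in\del(x)\setminus\{y\}$ and $w\in\del(y)\setminus\{x\}$; since $\vst{xy}{1}=\vstx{1}\cap\vsty{1}$ fixes the entire ball of radius one about the edge $\{x,y\}$ it lies in $G_u$ and, as is readily checked, in $G_w$ for every $w\in\del(y)\setminus\{x\}$, whence Weiss's relation collapses to $\vst{xy}{1}=1$, again a contradiction. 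Hence $s\geq 4$. I expect the only real difficulty to be the careful matching of the nontrivial group $\vst{xy}{1}$ against the exact shape of the arc-stabiliser that \cite{WeissBN} forces to vanish, together with the verification that the $\alt{5}$-local hypothesis holds uniformly over all vertices rather than merely at $x$; the heavy group-theoretic lifting is done entirely inside the cited theorems of Weiss.
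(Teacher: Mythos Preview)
Your proposal is correct and follows essentially the same approach as the paper: both split on the solubility of $\gx/\vstx{1}$, dispose of the soluble case via the regular abelian subgroup and \cite{Weiss-pfact}, and in the insoluble case pass to $G_0=\grp{\gx,\gy}$, invoke \cite[Theorem~1.1]{WeissBN}, and eliminate $s=2,3$ by exhibiting $\vst{xy}{1}$ inside the arc-stabiliser that Weiss forces to vanish. The only cosmetic difference is your placement of $\{x,y\}$ as the middle edge of the $3$-arc rather than the initial edge, which amounts to the same verification.
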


\begin{rem}
As observed in \cite[pg.10]{Weiss-pres}, although the results of \cite{WeissBN} are stated for finite graphs, only that the stabilisers of vertices are finite is used in the proof.
\end{rem}

\begin{lem}
\label{lem:gxy1neq1}
Suppose that $\vst{xy}{1}\neq 1$. Then $s\in\{4,5\}$.
\begin{proof}
The previous lemma gives $s\geq 4$. We again make use of the subgroup $G_0=\grp{\gx,\gy}$ which acts locally $s$-transitively on $\gam$. By  \cite[1.2]{WeissBN} we have $s=4$, $5$ or $7$ and we may identify $\gx$ and $\gy$ with the vertex stabilisers of adjacent vertices in the graphs coming from the groups $\mathrm{A}_2(4)$, $\mathrm{B}_2(4)$ and $\mathrm{G}_2(4)$ in the respective cases. However, $\gx$ and $\gy$ are conjugate in $G$, so they are isomorphic, but this property does not hold in the amalgam arising from  $\mathrm{G}_2(4)$. Thus $s\in \{4,5\}$.
\end{proof}
\end{lem}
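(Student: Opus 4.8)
The plan is to get the bound $s\geq 4$ for free from the preceding lemma and then to cap $s$ from above by feeding the local structure of the action into Weiss's classification of locally $s$-transitive amalgams. First I would note that Lemma \ref{lem:gxy1=1} already yields $s\geq 4$ under the hypothesis $\vst{xy}{1}\neq 1$, so only the upper bound is at issue. Since the action of $G$ is symmetric, $G$ is $s$-transitive, and the natural object to analyse is the part-preserving subgroup $G_0=\grp{\gx,\gy}$: by Lemma \ref{lem:s-trans iff loc s-trans} this subgroup acts locally $s$-transitively on $\gam$, which is exactly the shape of hypothesis required by the BN-pair type results of \cite{WeissBN}.

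Next I would pin down the local action. Because $\vst{xy}{1}\neq 1$, the soluble alternative is impossible: as in the proof of Lemma \ref{lem:gxy1=1}, if $\gx/\vstx{1}$ were soluble then by Lemma \ref{lem:subgpsofs5} it would contain a regular abelian subgroup and \cite{Weiss-pfact} would force $\vst{xy}{1}=1$, a contradiction. Hence $\gx/\vstx{1}$ is insoluble, and by Lemma \ref{lem:subgpsofs5} it has a normal subgroup isomorphic to $\alt{5}\cong\SL{2}{4}$; the same holds at every vertex $z$, so $G_z/\vst{z}{1}$ always contains a normal $\SL{2}{4}$. This is precisely the input for \cite[1.2]{WeissBN}, which I would invoke to conclude that $s\in\{4,5,7\}$ and that, in each case, the amalgam $(\gx,\gy)$ is the one arising from the adjacent-vertex (maximal parabolic) stabilisers in a rank-two group of Lie type over $\pfield{4}$, namely $\mathrm{A}_2(4)$, $\mathrm{B}_2(4)$ and $\mathrm{G}_2(4)$ for $s=4,5,7$ respectively.

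The final step is to eliminate $s=7$. Here I would use that symmetry of the action (for the odd valency five, equivalent to vertex- and edge-transitivity) places $x$ and $y$ in a common $G$-orbit, so $\gx$ and $\gy$ are conjugate in $G$ and hence abstractly isomorphic. I would then compare this against the three candidates: for $\mathrm{A}_2(4)$ and $\mathrm{B}_2(4)$ the two maximal parabolics are isomorphic (via the diagram symmetry of $A_2$ and the exceptional graph automorphism of $B_2$ in characteristic two, which swaps the two parabolic classes), whereas the two maximal parabolics of $\mathrm{G}_2(4)$ are not isomorphic, since the $G_2$ diagram admits no symmetry and there is no graph automorphism in characteristic two. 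This contradicts $\gx\cong\gy$, rules out $s=7$, and leaves $s\in\{4,5\}$.

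I expect the main obstacle to be the reliance on the deep classification \cite[1.2]{WeissBN}: once that black box delivers the three Lie-type candidates together with the identification of $\gx$ and $\gy$ as the two vertex stabilisers, the remaining argument is the soft observation that only $\mathrm{G}_2(4)$ has non-isomorphic vertex stabilisers. The one point I would want to state carefully, rather than settle by an ad hoc order or structure computation, is that last non-isomorphism of the two maximal parabolics of $\mathrm{G}_2(4)$, ideally by citing the parabolic structure of $\mathrm{G}_2(4)$ directly.
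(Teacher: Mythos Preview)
Your proposal is correct and follows essentially the same route as the paper: invoke the previous lemma for $s\geq 4$, pass to the locally $s$-transitive subgroup $G_0$, apply \cite[1.2]{WeissBN} to obtain $s\in\{4,5,7\}$ with the corresponding Lie-type identifications, and eliminate $s=7$ by observing that $\gx\cong\gy$ (via conjugacy in $G$) whereas the two maximal parabolics of $\mathrm{G}_2(4)$ are not isomorphic. Your version is somewhat more explicit in justifying the $\SL{2}{4}$ hypothesis for \cite{WeissBN} and in explaining why the parabolics are or are not isomorphic in each case, but the argument is the same.
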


The case  of quintic symmetric graphs with $s\in \{4,5\}$ is considered in \cite{Weiss-pres} where it is shown that the amalgam $(\gx,\ga,\gxy)$ has a completion in the groups $\aut{\mathrm{PSL}_{3}(4)}$ (for $s=4$) and $\aut{\mathrm{Sp}_4(4)}$ (for $s=5$). The quintic graph can be found as the incidence graph of the point-line geometry of the associated vector space. The result is below.

\begin{thm}[Weiss]
Suppose that $\vst{xy}{1}\neq 1$. Then the amalgam $\mathcal{A}=(\gx,\ga,\gxy)$ is in Table \ref{table:weisstable}.
\begin{proof}
By \ref{lem:gxy1=1} we have $s\in \{4,5\}$. Hence \cite[Theorem 1.2]{Weiss-pres} is applicable. 
\end{proof}
\end{thm}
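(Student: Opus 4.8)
The plan is to reduce the statement to the regime of high arc-transitivity and then invoke Weiss's existing classification. Under the standing hypothesis $\vst{xy}{1}\neq 1$, Lemma \ref{lem:gxy1=1} already yields $s\geq 4$, and Lemma \ref{lem:gxy1neq1} sharpens this to $s\in\{4,5\}$, ruling out the remaining value $s=7$ arising from $\mathrm{G}_2(4)$ on the grounds that $\gx$ and $\gy$ are conjugate, hence isomorphic, in $G$. This is the essential reduction: the hypothesis $\vst{xy}{1}\neq 1$ forces $\gx/\vstx{1}$ to be insoluble (so to contain a normal subgroup isomorphic to $\alt{5}$), which in turn drives the local transitivity up into precisely the range $s\in\{4,5\}$ treated by Weiss.

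With $s\in\{4,5\}$ in hand, I would apply \cite[Theorem 1.2]{Weiss-pres} directly. Weiss classified the amalgam $\mathcal{A}=(\gx,\ga,\gxy)$ exactly in the case $s\geq 4$ for valency five, identifying it with the amalgam of a vertex stabiliser and an edge stabiliser in a rank-two group of Lie type over $\pfield{4}$: a completion inside $\aut{\PSL{3}{4}}$ when $s=4$, and inside $\aut{\Sp{4}{4}}$ when $s=5$, with the quintic tree realised via the incidence graph of the point-line geometry of the associated $\pfield{4}$-space. These are exactly the two amalgams recorded in Table \ref{table:weisstable}, so the theorem is immediate once the reduction to $s\in\{4,5\}$ is established.

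The main obstacle is concentrated entirely in Weiss's theorem, which we are entitled to cite. Were one to prove the classification of the $s\in\{4,5\}$ amalgams from scratch, the hard part would be the identification of $\mathcal{A}$ with the Lie-type amalgams via the amalgam method: one analyses the chief factors of $G_z/\vst{z}{1}$ (each containing a normal copy of $\alt{5}\cong\SL{2}{4}$), builds up the higher vertex stabilisers $\vst{z}{i}$ as controlled by \cite[Theorem 1.1]{WeissBN}, and recognises the resulting rank-two geometry as the point-line geometry of a three- or four-dimensional $\pfield{4}$-space. Since this machinery is supplied by \cite{Weiss-pres}, the only genuine work left to us is verifying that the hypotheses of that theorem hold, namely that $s\in\{4,5\}$, which the preceding two lemmas deliver.
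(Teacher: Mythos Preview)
Your approach matches the paper's exactly: reduce to $s\in\{4,5\}$ via the two preceding lemmas and then invoke \cite[Theorem 1.2]{Weiss-pres}. One correction to your description of the conclusion: Table \ref{table:weisstable} contains seven amalgams, not two --- six for $s=4$ (labelled $\mathcal{Q}_4^1$ through $\mathcal{Q}_4^6$) and one for $s=5$ ($\mathcal{Q}_5^1$). All six $s=4$ amalgams admit completions inside $\aut{\PSL{3}{4}}$, but they are genuinely distinct amalgams, distinguished by which outer automorphisms of $\PSL{3}{4}$ one adjoins to the parabolic and Borel subgroups (as spelled out in the Remark following the theorem). This misstatement does not affect the logic of your argument, since the classification is entirely delegated to Weiss's theorem, but your summary of what that theorem yields is inaccurate.
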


\begin{table}
\begin{center}
\begin{tabular}{ c| c| c| c |c}
\hline	\hline
Amalgam &	$\gx$	& 	$\ga$	&	$\gxy$  & s \\
	\hline	\hline
$\mathcal{Q}_4^1$ &	$2^4:\alt{5}$	&	$2^{2+2+2}:\sym{3}$	&	$2^4:\alt{4}$ & 4 \\	\hline
$\mathcal{Q}_4^2$ &	$2^4:\alt{5}$	&	$2^{2+2+2}:\cyc{6}$	&	$2^4:\alt{4}$ & 4 \\	\hline
$\mathcal{Q}_4^3$ &	$2^4:(\alt{5}\times \cyc{3})$	&	$(2^{2+4}:3) : \sym{3}$	&	$2^{2+4}:3^2$ & 4 \\	\hline
$\mathcal{Q}_4^4$ &	$2^4:(\alt{5}\times \cyc{3})$	&	$(2^{2+4}:3) : \cyc{6}$	&	$2^{2+4}:3^2$ & 4 \\ 	\hline
$\mathcal{Q}_4^5$ & 	$2^4:\sym{5}$	&	$2^{2+4+1}:\sym{3}$	&	$2^{2+4}:\sym{3}$ & 4 \\ 	\hline
$\mathcal{Q}_4^6$ & 	$2^4 : \sym{5} \foo{scale=.15} \sym{3}$	&	$2^{2+4}:\sym{3}^2$	&	$2^{2+4}:\sym{4}\foo{scale=.15} \sym{3}$ & 4\\ 	\hline
$\mathcal{Q}_5^1$ &  	$2^6: \sym{5}\foo{scale=.15} \sym{3}$	&	$(2^6 : (\alt{4} \times \cyc{3})) : \cyc{4}$	&	$2^6 : \sym{4}\foo{scale=.15} \sym{3}$ & 5 

\end{tabular}
\caption{Amalgams with $\vst{xy}{1}\neq 1$.}
\label{table:weisstable}
\end{center}
\end{table}

\begin{rem}
In Table \ref{table:weisstable} we have given a description of the groups in terms of a factors appearing in a normal series, but this  does not determine the group nor the amalgam uniquely. As we have mentioned, the amalgams in rows 1-6 have completions inside $\aut{\PSL{3}{4}}$, we now give explicit constructions. Let $L=\PSL{3}{4}$ and identify $L$ with a subgroup of $A=\aut{L}$ (see \cite[pg.23]{atlas} for various properties of $L$ and $A$). We can choose parabolic subgroups $P_1$ and $P_2$ intersecting in a Borel subgroup $B$ such that $P_1$ and $B$ are normalised by the outer automorphisms $f$ and $p$ (which generate a subgroup isomorphic to $\sym{3}$ in $A$) and $P_1$ and $P_2$ are permuted by the ``graph" automorphism $g$ (so that $\grp{f,p,g}\cong \sym{3}\times \cyc{2}$ is a complement to $L$ in $A$).

For $i\in [1,6]$ we will define subgroups $G_i$ of $A$ which are completions for the amalgams $\mathcal{Q}_4^i$. We build amalgams $\mathcal{A}_i=(A_1,A_2,A_{12})$ over the subgroups $P_1$ and $B$ so that $\mathcal{A}_i$ has the same type as $\mathcal{Q}_4^i$. Then since $\mathcal{Q}_4^i$ is the unique amalgam of that type, we see that $G_i=\grp{A_1,A_2}$ is a completion of $\mathcal{Q}_4^i$. Beginning with the last two, let $\mathcal{A}_5=(\grp{P_1,f},\grp{B,f,g},\grp{B,f})$ and let $\mathcal{A}_6=(\grp{P_1,f,p},\grp{B,f,p,g},\grp{B,f,p})$.  Then set $\mathcal{A}_{1}=(P_1,\grp{B,fg},B)$ and $\mathcal{A}_{2}=(P_1,\grp{B,g},B)$. In a similar fashion, we obtain $\mathcal{A}_{3}=(\grp{P_1,p},\grp{B,p,fg},\grp{B,p})$ and $\mathcal{A}_{4}=(\grp{P_1,p},\grp{B,p,g},\grp{B,p})$.

The group $K=\aut{\mathrm{Sp}_4(4)}$ is a completion of $\mathcal{Q}_5^1$. There is an element of order 4, $f$, which generates a complement to $J=\mathrm{Inn}(\mathrm{Sp}_4(4))$ (see \cite[pg.44]{atlas}). As above, we can take parabolic subgroups of $J$, $P_1$ and $P_2$ say, which come from different classes and intersect in a Borel subgroup $B$ such that $f$ interchanges $P_1$ and $P_2$ and  $f^2$ normalises both. Set $A_1=\grp{P_1,f^2}$, $A_2=\grp{B,f}$ and $A_{12}=\grp{B,f^2}$. The amalgam $\mathcal{A}=(A_1,A_2,A_{12})$ is of the same type as $\mathcal{Q}_5^1$. Since $\mathcal{Q}_5^1$ is the unique amalgam of this type, $\aut{\mathrm{Sp}_4(4)}$ is a completion of $\mathcal{Q}_5^1$. Note that $A_1/\core{A_1}{A_{12}}\cong \sym{5}$, so the local action at a vertex is the full symmetric group. On the other hand, there is no index 2 subgroup of $K$ which contains $f$. In some sense, this can be seen as the reason that there  is no $5$-transitive symmetric quintic graph in which the local action is the alternating group of degree 5.
\end{rem}

\section{The soluble case when $s\leq 3$}
\label{sec:gxy1=1sol}

In this section we assume that $s\leq 3$ and $\gx$ is soluble. This situation was investigated in \cite{zhoufeng} where the isomorphism type of $\gx$ and value of $s$ is determined.

\begin{thm}
Suppose that $\gx$ is soluble and $s\geq 1$. Then $s\leq 3$ and $\gx$ is isomorphic to one of $\cyc{5}$, $\dih{10}$, $\dih{20}$ if $s=1$, one of $\frtw$, $\frtw \times \cyc{2}$ if $s=2$ or $\frtw\times \cyc{4}$ if $s=3$.
\begin{proof}
See \cite[Theorem 4.1]{zhoufeng}.
\end{proof}
\end{thm}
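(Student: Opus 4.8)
The plan is to work inside the universal completion $G=\gx *_{\gxy}\ga$ acting on the quintic tree $\gam$ (Proposition \ref{prop:fund prop of gam}) and to pin down the two invariants that govern the soluble case: the local action $L_x:=\gx/\vstx{1}$ and the order $n:=\sz{\vstx{1}}$ of the vertex kernel. First I would record that, since $\gx$ is soluble, Proposition \ref{prop:solofgx} makes $L_x$ a soluble transitive subgroup of $\sym{5}$, so by Lemma \ref{lem:subgpsofs5} we have $L_x\in\{\cyc{5},\dih{10},\frtw\}$; write $k=\sz{L_x}/5\in\{1,2,4\}$ for the order of a point stabiliser in $L_x$. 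The second standing fact is $\vst{xy}{1}=1$: as $L_x$ contains the regular abelian subgroup $\cyc{5}$, the soluble half of the argument in the proof of Lemma \ref{lem:gxy1=1}, i.e. Weiss's $p$-factorisation theorem \cite{Weiss-pfact}, applies verbatim.

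From $\vst{xy}{1}=\vstx{1}\cap\vsty{1}=1$ together with $\vstx{1},\vsty{1}\nml\gxy$ the two kernels commute, so $\vstx{1}\times\vsty{1}\leq\gxy$; moreover $\vsty{1}$ fixes $\del(y)\ni x$, so $\vsty{1}\leq\gx$, and $n=\sz{\vsty{1}}$ by the $G$-conjugacy of $\gx$ and $\gy$. The cleanest way to bound $n$ is to look one edge further along a geodesic $x,y,x_2$: since $\vsty{1}\cap\vst{x_2}{1}=\vst{yx_2}{1}=1$ and $G_{yx_2}/\vst{x_2}{1}$ embeds in $L_{x_2}$ as the point stabiliser $\cyc{k}$, the kernel $\vsty{1}$ embeds in $\cyc{k}$. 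Hence $\vstx{1}$ is cyclic with $n\mid k\leq 4$, so $\gxy$ is a $2$-group and $\gxy$ is a Sylow $2$-subgroup of the $\{2,5\}$-group $\gx$. I would then check $\vstx{1}\leq\zent{\gx}$: for $n\leq 2$ this is automatic as $\aut{\cyc{n}}=1$, and for $n=4$ (where necessarily $k=4$ and $\gxy=\vstx{1}\times\vsty{1}$) the containment $\cent{\gx}{\vstx{1}}\supseteq\gxy$ forces $\sz{\gx:\cent{\gx}{\vstx{1}}}$ to divide $\gcd(5,\sz{\aut{\cyc{4}}})=1$.

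To bound $s$ I would translate arc-transitivity into the local data. The graph is transitive on $2$-arcs exactly when $\gxy$ is transitive on $\del(y)\setminus\{x\}$, which happens iff $L_x=\frtw$; so $L_x\in\{\cyc{5},\dih{10}\}$ forces $s=1$. Assuming $L_x=\frtw$, I would show the stabiliser of the $2$-arc $(x,y,x_2)$ is exactly $\vsty{1}$: the point stabiliser $\cyc{4}\leq L_y$ acts regularly on $\del(y)\setminus\{x_2\}\ni x$, so any element of $G_{yx_2}$ fixing $x$ lands in $\vsty{1}$, while $\vsty{1}$ visibly fixes the whole $2$-arc. Now $\vsty{1}\cong\cyc{n}$ embeds in the point stabiliser $\cyc{4}\leq L_{x_2}$, which acts regularly on $\del(x_2)\setminus\{y\}$; hence $s\geq 3$ iff $n=4$, and when $n=4$ the stabiliser of a $3$-arc is trivial, so $s=3$. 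In every case $s\leq 3$.

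The remaining, and genuinely fiddly, step is upgrading ``$\vstx{1}$ central with quotient $L_x$'' to the exact isomorphism type, and this is where I expect the real obstacle. A central extension need not split the way the answer demands: a central $\cyc{2}$ over $\dih{10}$ could a priori give $\dih{20}$ or the dicyclic group of order $20$, distinguished only by $\gxy$ being $\cyc{2}^2$ versus $\cyc{4}$, and for $L_x=\frtw$ the nonvanishing of $\mathrm{Ext}(\cyc{4},\cyc{n})$ allows non-split stems. I would kill every such ambiguity using the two rigid facts already in hand: $\gxy$ is a Sylow $2$-subgroup of $\gx$ containing $\vstx{1}\times\vsty{1}$, and $\gxy$ surjects onto the cyclic point stabiliser $\cyc{k}$. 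Together these determine $\gxy$ (for example, when $L_x=\frtw$ and $n=2$ the surjection onto $\cyc{4}$ excludes $\cyc{2}^3$ and $\dih{8}$, while the three involutions of $\cyc{2}^2$ exclude $\cyc{8}$ and $\mathrm{Q}_8$, leaving $\cyc{4}\times\cyc{2}$). Writing $\gx=\cyc{5}\rtimes\gxy$ with $\vstx{1}$ acting trivially on the normal $\cyc{5}$ and a complementary $\cyc{k}\leq\gxy$ acting faithfully then gives $\gx\cong\vstx{1}\times(\cyc{5}\rtimes\cyc{k})$, i.e. $\cyc{5}$, $\dih{10}$, $\dih{20}$ for the data $(\cyc{5},1),(\dih{10},1),(\dih{10},2)$ and $\frtw$, $\frtw\times\cyc{2}$, $\frtw\times\cyc{4}$ for $(\frtw,1),(\frtw,2),(\frtw,4)$, matching $s=1,1,1,2,2,3$ respectively.
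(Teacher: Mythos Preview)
Your argument is correct. The paper itself gives no proof here: it simply cites \cite[Theorem~4.1]{zhoufeng}, so any self-contained argument is automatically a different route. What you have done is to recover the Zhou--Feng classification using only ingredients already assembled in the paper (Proposition~\ref{prop:solofgx}, Lemma~\ref{lem:subgpsofs5}, and the soluble branch of Lemma~\ref{lem:gxy1=1} via \cite{Weiss-pfact}). The two decisive observations---that $\vsty{1}$ injects into the cyclic point stabiliser $\cyc{k}$ of $L_{x_2}$ at the next vertex along a geodesic, and that this forces $\vstx{1}\leq\zent{\gx}$---do all the structural work, and the splitting $\gx\cong\vstx{1}\times(\cyc{5}\rtimes\cyc{k})$ then drops out from the Sylow picture in $\gxy$. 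One small point you glide over: normality of the Sylow $5$-subgroup of $\gx$ is not literally forced by Sylow counting when $\sz{\gxy}=16$, but it follows immediately since $\vstx{1}$ is central and $\gx/\vstx{1}\cong\frtw$ has a normal Sylow $5$-subgroup whose preimage is $\cyc{5}\times\vstx{1}$. The payoff of your approach is that the paper could, in principle, become independent of \cite{zhoufeng} for this statement; the cost is the case analysis on $\gxy$ in your final paragraph, which the citation avoids.
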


We now determine the isomorphism type of the group $\ga$ and the embedding $\gxy \rightarrow \ga$. First suppose that $\vstx{1}=1$. This gives us the list of seven amalgams in Table \ref{table:solgx1}. To find the list, we use the fact that $\gxy$ is uniquely determined by $\gx$, and  we consider each of the groups of order $2|\gxy|$ which has a subgroup isomorphic to $\gxy$. This gives the list in Table \ref{table:solgx1}.

\begin{table}[h]
\begin{center}\begin{tabular}{ c| c| c| c |c}
\hline	\hline
Amalgam	& $\gx$	& 	$\ga$	&	$\gxy$  & s \\
 	\hline	\hline
$\mathcal{Q}_1^1$ & 	$\cyc{5}$	&	$\cyc{2}$	&	1 & 1 \\	\hline
$\mathcal{Q}_1^2$ & 	$\dih{10}$	&	$2^2$	&	$\cyc{2}$ & 1 \\	\hline
$\mathcal{Q}_1^3$ &  $\dih{10}$	&	$\cyc{4}$	&	$\cyc{2}$ & 1 \\	\hline
$\mathcal{Q}_2^1$ & 	$\frtw$	&	$\cyc{4}\times\cyc{2}$	&	$\cyc{4}$ & 2 \\ 	\hline
$\mathcal{Q}_2^2$ &  	$\frtw$	&	$\cyc{8}$	&	$\cyc{4}$ & 2 \\ 	\hline
$\mathcal{Q}_2^3$ &  	$\frtw$	&	$\dih{8}$	&	$\cyc{4}$ & 2\\ 	\hline
$\mathcal{Q}_2^4$ &  	$\frtw$	&	$\mathrm{Q}_8$	&	$\cyc{4}$ & 2 
\end{tabular}
\caption{Amalgams with soluble vertex stabilisers and $\vstx{1}=1$.}
\label{table:solgx1}\end{center}
\end{table}

From now on we assume that $\vstx{1}\neq 1$. Then $\vstx{1}$ is isomorphic to its projection over $\vsty{1}$ since $\vst{xy}{1}=1$. Furthermore, $[\vstx{1},\vsty{1}]=1$, so $G_{xy}$ contains a normal subgroup isomorphic to $\vstx{1}\times \vstx{1}$. 

\begin{lem}
Suppose that  $\gx \cong \dih{20}$. Then $\ga \cong \dih{8}$.
\begin{proof}
As $\gx \cong\dih{20}$ we see $\vstx{1}$ has order 2 and $\gxy \cong 2^2$. Then $\ga$ is a non-abelian group of order 8 with an elementary abelian subgroup of order 4. It follows that $\ga \cong \dih{8}$.
\end{proof}
\end{lem}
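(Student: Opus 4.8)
The plan is to read off the relevant orders from the degree of the amalgam, to determine $\vstx{1}$ and $\gxy$ from the internal subgroup structure of $\dih{20}$, and then to identify $\ga$ among the groups of order $8$ by combining the presence of an elementary abelian subgroup of order $4$ with a non-abelian conclusion forced by primitivity.

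First I would record that, since the amalgam has degree $(5,2)$, we have $|\gxy| = |\gx|/5 = 4$ and $|\ga| = 2|\gxy| = 8$. To find $\vstx{1}$ I would use Lemma \ref{lem:subgpsofs5}: the quotient $\gx/\vstx{1}$ is a transitive subgroup of $\sym{5}$, and as $\gx$ is soluble of order $20$ the only possibilities are $\cyc{5}$, $\dih{10}$ and $\frtw$, corresponding to $|\vstx{1}| = 4$, $2$, $1$ respectively (the insoluble cases $\alt{5}$, $\sym{5}$ being excluded on order grounds). The case $\vstx{1}=1$ is excluded by the standing hypothesis of this subsection. The case $|\vstx{1}|=4$ would require a normal subgroup of order $4$ in $\dih{20}$; but every subgroup of order $4$ is a Sylow $2$-subgroup, these are the five conjugate Klein four-groups $\grp{r^5,\, s r^i}$, none of which is normal, so this case cannot occur. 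Hence $|\vstx{1}|=2$ and $\gx/\vstx{1}\cong\dih{10}$.

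Next I would determine $\gxy$, a group of order $4$. Since the cyclic subgroup $\grp{r}$ of $\dih{20}$ has order $10$, the group $\dih{20}$ contains no element of order $4$, so each of its subgroups of order $4$ is elementary abelian; thus $\gxy\cong 2^2$. Consequently $\ga$ is a group of order $8$ containing the elementary abelian subgroup $\gxy\cong 2^2$, so it is one of $2^3$, $\cyc{4}\times\cyc{2}$ or $\dih{8}$ (the groups $\cyc{8}$ and $\mathrm{Q}_8$ each having a unique involution are ruled out).

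The key remaining step, and the only non-routine one, is to show that $\ga$ is non-abelian, which at once forces $\ga\cong\dih{8}$. For this I would invoke primitivity: the subgroup $\vstx{1}\leq\gxy$ is normal in $\gx$, being the kernel of the action on $\del(x)$, and if $\ga$ were abelian then $\vstx{1}$ would also be normal in $\ga$, so the non-trivial subgroup $\vstx{1}$ would be normal in both $\gx$ and $\ga$, contradicting the primitivity of $\mathcal{A}$. (Alternatively, one could argue concretely: an element of $\ga\setminus\gxy$ interchanges $x$ and $y$, hence conjugates $\vstx{1}$ onto $\vsty{1}$, and since $\vstx{1}\cap\vsty{1}=\vst{xy}{1}=1$ these are distinct, so this element fails to centralise $\gxy$.) This non-abelian obstruction is the crux; everything preceding it is routine order-counting and inspection of the subgroup lattice of $\dih{20}$.
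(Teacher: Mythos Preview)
Your proof is correct and follows essentially the same approach as the paper's: compute the orders, identify $\gxy\cong 2^2$ from the absence of order-$4$ elements in $\dih{20}$, and then pin down $\ga\cong\dih{8}$ as the unique non-abelian group of order $8$ containing a Klein four-group. You have in fact been more careful than the paper, which simply asserts that $\ga$ is non-abelian; your primitivity argument (and the alternative via the swap of $\vstx{1}$ and $\vsty{1}$, which is the paper's implicit reasoning from the paragraph preceding the lemma) makes this explicit.
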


In the next lemma we find the edge stabilisers have order 16. Recall the modular group, $\mathrm{M}_{16}$ of order 16 has presentation $\grp{u,v\mid u^8=1,v^2=1,u^v=u^5}$, and as a subgroup of $\sym{8}$ is generated by the permutations $(1,2,3,4,5,6,7,8)$ and $(2,6)(4,8)$. By $\mathrm{N}_{16}$ we denote the group $\grp{(1,2,3,4)(5,6,7,8),(5,7)(6,8),(1,5)(2,6)(3,7)(4,8)}$. Observe that $\mathrm{N}_{16}$ has a central cyclic subgroup of order 4, modulo which it is elementary abelian of order 4.

\begin{lem}
\label{lem:edge16}
Suppose that $\gx \cong \frtw \times \cyc{2}$. Then $\ga \cong \mathrm{M}_{16}$ or $\ga\cong \mathrm{N}_{16}$.
\begin{proof}
We have $\gxy \cong 4 \times 2$, fix  notation $\gxy=\grp{h,j}$ where $h$ has order 4 and $j$ has order 2. Additionally, we may assume that $\grp{j}=\vstx{1}$ and $\grp{h^2j}=\vsty{1}$ since $j$ is not a square in $\gxy$. We know there is $t\in\ga$ such that $j^t=h^2j$, and we choose such a $t$ with order as small as possible. If $t$ has order 2, then we find that $\ga\cong N_{16}$, otherwise $t$ has order 4 or 8. If $t$ has order 8, then (after changing notation if necessary) we have $t^2=h$ and so $j^t=h^2j=t^4j$ implies that $t^j=t^5$ and we see $\ga\cong M_{16}$. It remains to see that $t$ cannot have order 4. 

There are exactly two cyclic subgroups of order 4 in $\gxy$ and these are generated by $h$ and $hj$ respectively. We claim that $t$ centralises one of these subgroups. First, assume that $h^t=hj$ or $h^t=h^3j$. Then $t^2\in\gxy$, so $h^{t^2}=h$. On the other hand, both of $h^t=hj$ and $h^t=h^3j$ imply that $h^{t^2}=h^3$, whence $h=h^3$, a contradiction. Hence either $h^t=h$, in which case $t$ centralises $\grp{h}$ or $h^t=h^3$. Then we find that $(hj)^t=h^3h^2j=hj$, so $t$ centralises $\grp{hj}$. In both cases, we find an element of order 4, $k$ say, in $\gxy$ which is centralised by $t$. Hence $t^2=k^2$ and so $(tk)^2=1$, but $tk\notin\gxy$, and this contradicts our choice of $t$ with minimal order.
\end{proof}
\end{lem}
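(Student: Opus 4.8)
The plan is to realise $\ga$ as $\gxy$ extended by a single element $t$, and to read off the isomorphism type from the order of $t$ together with its conjugation action on $\gxy$. First I would record the orders: since $\gx\cong\frtw\times\cyc{2}$ acts transitively on the five neighbours of $x$, the point stabiliser $\gxy$ has order $8$ and, as in the preceding discussion, is isomorphic to $\cyc{4}\times\cyc{2}$; as $\gxy$ is the index-$2$ subgroup of $\ga$ fixing $x$, we get $|\ga|=16$ and $\gxy\nml\ga$. I would then fix $\gxy=\grp{h,j}$ with $h$ of order $4$ and $j$ of order $2$, so that its three involutions are $h^2$, $j$, $h^2j$, with $h^2$ the only one that is a square. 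By the facts recorded just before the lemma, $\vstx{1}\times\vsty{1}$ is a nontrivial subgroup of $\gxy$ of order $|\vstx{1}|^2$, forcing $|\vstx{1}|=|\vsty{1}|=2$; these are distinct (their intersection $\vst{xy}{1}$ being trivial), and since each is the centre of a vertex stabiliser isomorphic to $\frtw\times\cyc{2}$ --- and the centre of such a group is a non-square in its point stabiliser --- neither equals $\grp{h^2}$. Hence I may label them $\vstx{1}=\grp{j}$ and $\vsty{1}=\grp{h^2j}$.

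Next I would introduce the extra generator. As $\ga/\gxy\cong\cyc{2}$ acts on $\{x,y\}$ as the transposition, every $t\in\ga\setminus\gxy$ interchanges $x$ and $y$ and hence conjugates $\vstx{1}$ to $\vsty{1}$, giving $j^t=h^2j$; I would choose such a $t$ of least possible order. Since $t^2\in\gxy$, the order of $t$ is $2$, $4$ or $8$ and $\ga=\grp{h,j,t}$. The two extreme cases are immediate. If $t$ has order $2$, then the induced automorphism $\tau\colon x\mapsto x^t$ of $\gxy$ satisfies $\tau^2=1$ and $\tau(j)=h^2j$, which forces $h^t\in\{h,h^3\}$; in either case one checks that $\grp{h}$ or $\grp{hj}$ is a central cyclic subgroup of order $4$ with quotient $\cyc{2}\times\cyc{2}$, precisely the stated description of $\mathrm{N}_{16}$. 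If $t$ has order $8$, then $\grp{t}$ has index $2$ and, after adjusting the order-$4$ generator, $t^2=h$; the relation $j^t=h^2j=t^4j$ rearranges to $t^j=t^5$, identifying $\ga$ with the modular group $\mathrm{M}_{16}$.

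The crux, and the step I expect to be the genuine obstacle, is ruling out $|t|=4$; the whole argument is arranged so that this case contradicts the minimality of $|t|$. The key point is that $t$ must centralise one of the two cyclic subgroups $\grp{h}$, $\grp{hj}$ of order $4$ in $\gxy$. To see this I would use $t^2\in\gxy$ to get $h^{t^2}=h$, whereas each of $h^t=hj$ and $h^t=h^3j$ would give $h^{t^2}=h^3$; hence $h^t\in\{h,h^3\}$, and a short computation with $(hj)^t$ shows that $t$ fixes $\grp{h}$, respectively $\grp{hj}$, pointwise. Writing $k$ for the centralised generator, the action of $t$ fixes $h^2$ and interchanges $j$ with $h^2j$, so $h^2=k^2$ is the only $t$-fixed involution of $\gxy$; as $t^2$ is such an involution, $t^2=k^2$. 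Then $tk\notin\gxy$ and, as $t$ commutes with $k$, $(tk)^2=t^2k^2=k^4=1$; moreover $tk$ still conjugates $\vstx{1}$ to $\vsty{1}$, contradicting the minimality of $t$. This leaves only $|t|\in\{2,8\}$, and the two cases above give $\ga\cong\mathrm{N}_{16}$ or $\ga\cong\mathrm{M}_{16}$ respectively.
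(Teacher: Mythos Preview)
Your proof is correct and follows essentially the same route as the paper: set up $\gxy=\grp{h,j}$ with $\vstx{1}=\grp{j}$, $\vsty{1}=\grp{h^2j}$, pick $t\in\ga\setminus\gxy$ of minimal order, handle $|t|=2$ and $|t|=8$ directly, and eliminate $|t|=4$ by showing $t$ centralises some $k\in\{h,hj\}$, deducing $t^2=k^2$, and producing the shorter element $tk$. Your write-up is in fact slightly more explicit than the paper's in two places --- you justify why $\vstx{1}$ and $\vsty{1}$ avoid $\grp{h^2}$ via the centre of $\frtw\times\cyc{2}$, and you spell out why $t^2$ must equal $k^2$ (as the unique $t$-fixed involution) where the paper simply writes ``Hence $t^2=k^2$'' --- but the argument is the same.
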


\begin{lem}
Suppose that $\gx \cong \frtw \times \cyc{4}$. Then $\ga \cong \cyc{4}\wr\cyc{2}$.
\begin{proof}
Since $\vstx{1}\cong \cyc{4}$, we have $\gxy=\vstx{1}\vsty{1} \cong \cyc{4} \times \cyc{4}$. Choose $q$ of least order such that $q \notin \gxy$, we claim $q$ has order 2. Writing $\vstx{1}=\grp{a}$, set $b=a^q$, then $\vsty{1}=\grp{b}$ and $(a^i)^q=b^i$ for $i\in \field{N}$. Since $\ga$ is non-abelian, it follows that $\zent{\ga} = \grp{ab}$. Now $q^2 \in \gxy$ which is abelian, so $q^2 \in \zent{\ga}$. If $q^2=1$ we are done. Suppose first that $q^2=a^2b^2$. Then $(qab)^2=1$, and $qab \notin \gxy$ since $q\notin \gxy$, this contradicts our choice of $q$. Similarly, if $q^2=ab$ or $q^2=a^3b^3$,  we find that $qb^3$, respectively, $qb$, are involutions, and do not lie in $\gxy$. Thus we may assume $q$ is an involution, and therefore $\ga \cong \cyc{4} \wr \cyc{2}$.
\end{proof}
\end{lem}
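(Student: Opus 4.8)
The plan is to first pin down the local structure and then reduce everything to showing that $\ga$ is a \emph{split} extension of $\gxy$ by an involution that interchanges the two cyclic factors, since that extension is by definition $\cyc{4}\wr\cyc{2}$. As $\gx\cong\frtw\times\cyc{4}$ acts on $\del(x)$ through its quotient $\frtw$, the kernel $\vstx{1}$ is precisely the direct factor $\cyc{4}$. We are in the case $\vst{xy}{1}=1$, so by the remarks preceding the lemma $\vstx{1}$ and $\vsty{1}$ commute and meet trivially; comparing orders ($|\gxy|=|\gx|/5=16$ and $|\ga|=2|\gxy|=32$) gives $\gxy=\vstx{1}\vsty{1}\cong\cyc{4}\times\cyc{4}$. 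Writing $\grp{a}=\vstx{1}$, I would pick any $q\in\ga\setminus\gxy$; since $q$ stabilises the edge $e$ but not the vertices, it interchanges $x$ and $y$, hence conjugates $\vstx{1}$ to $\vsty{1}$. Setting $b:=a^q$ we then have $\vsty{1}=\grp{b}$ and $\gxy=\grp{a}\times\grp{b}$.

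The entire content of the lemma is now the single assertion that some such $q$ can be taken to be an involution: for then $a^q=b$ and $b^q=a$ exhibit $\ga=(\grp{a}\times\grp{b})\rtimes\grp{q}$ with $q$ swapping the two factors, which is $\cyc{4}\wr\cyc{2}$. I would therefore choose $q$ of least possible order in $\ga\setminus\gxy$ and derive a contradiction unless $q^2=1$. Two preliminary observations make the argument go. First, conjugation by $q^2\in\gxy$ is trivial on the abelian group $\gxy$, so the automorphism $q$ induces on $\gxy$ has order at most $2$; as it sends $a\mapsto b$ it must send $b\mapsto a$, i.e. $b^q=a$. Second, $\ga$ is non-abelian (otherwise $a^q=a$, forcing $b=a$), and an element $a^ib^j$ of the index-two abelian subgroup $\gxy$ is fixed by $q$ exactly when $i=j$; hence $\zent{\ga}=\grp{ab}\cong\cyc{4}$. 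Since $q^2\in\gxy$ is visibly fixed by $q$, this gives $q^2\in\grp{ab}$, so $q^2\in\{1,ab,a^2b^2,a^3b^3\}$.

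It remains to eliminate the three nontrivial values of $q^2$, and here minimality does the work: in each case I would exhibit an involution lying outside $\gxy$, contradicting the choice of $q$. Using $a^q=b$, $b^q=a$, $(ab)^q=ab$ and that $a,b$ have order $4$, a short computation gives $(qab)^2=q^2a^2b^2$, $(qb^3)^2=q^2a^3b^3$ and $(qb)^2=q^2ab$, so that $qab$, $qb^3$, $qb$ are involutions precisely when $q^2=a^2b^2$, $q^2=ab$, $q^2=a^3b^3$ respectively, and none of them lies in $\gxy$. This forces $q^2=1$ and completes the proof. The only mildly delicate point is the bookkeeping in these conjugations—in particular securing $b^q=a$ before squaring—but there is no structural obstacle; conceptually, the reason an involution must exist is simply that $H^2(\grp{q},\gxy)$ for the swap action vanishes, since the norm map $m\mapsto m\,m^q$ already surjects onto the fixed subgroup $\grp{ab}$, so the extension splits.
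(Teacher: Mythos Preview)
Your proposal is correct and follows essentially the same route as the paper: establish $\gxy=\vstx{1}\vsty{1}\cong\cyc{4}\times\cyc{4}$, pick $q\in\ga\setminus\gxy$ of least order, show $q^2\in\zent{\ga}=\grp{ab}$, and eliminate the three nontrivial values of $q^2$ by exhibiting involutions outside $\gxy$. You supply a bit more detail than the paper (the justification that $b^q=a$, the explicit squaring computations, and the cohomological aside), but the argument is the same.
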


This completes the identifications of the vertex and edge stabilisers when both of these groups are soluble. The full list is in Table \ref{table:solgxy1}.

\begin{table}[ht]
\begin{center}
\begin{tabular}{ c|  c| c| c |c}
\hline	\hline
Amalgam &	$\gx$	& 	$\ga$	&	$\gxy$ & s\\
	\hline	\hline
$\mathcal{Q}_1^4$ &	$\dih{20}$	&	$\dih{8}$	&	$2^2$ & 1\\	\hline
$\mathcal{Q}_2^5$ &	$\frtw \times \cyc{2}$	&	$\mathrm{N}_{16}$	&	$\cyc{4}\times \cyc{2}$ & 2\\ 	\hline
$\mathcal{Q}_2^6$ & 	$\frtw \times \cyc{2}$	&	$\mathrm{M}_{16}$	&	$\cyc{4}\times \cyc{2}$ & 2\\ 	\hline
$\mathcal{Q}_3^1$ & 	$\frtw \times \cyc{4}$	&	$\cyc{4}\wr\cyc{2}$	&	$\cyc{4}\times \cyc{4}$ & 3 

\end{tabular}
\caption{Amalgams with soluble vertex stabilisers, $\vst{xy}{1}=1$ and $\vstx{1}\neq 1$.}
\label{table:solgxy1}
\end{center}
\end{table}

\section{The non-soluble case when $s\leq 3$}
\label{sec:gxy1=1nonsol}

Throughout this section we assume that $s\leq 3$ and  $\gx/\vstx{1}$ is non-soluble. Proposition \ref{prop:solofgx} and  Lemma \ref{lem:subgpsofs5} show that $\gx/\vstx{1}$ contains a normal subgroup isomorphic to $\alt{5}$. When $\vstx{1}=1$ the existence in $\ga$ of a normal subgroup isomorphic to $\alt{4}$ or $\sym{4}$ readily implies that $\ga$ is one of the groups in column 2 of Table \ref{table:nonsolgx1=1}.

\begin{table}[ht]
\begin{center}
\begin{tabular}{c |  c| c| c |c}
\hline	\hline
Amalgam &	$\gx$	& 	$\ga$	&	$\gxy$ & s \\
	\hline	\hline
$\mathcal{Q}_2^7$ &  	$\alt{5}$	&	$\sym{4}$	&	$\alt{4}$ & 2 \\ 	\hline
$\mathcal{Q}_2^8$ &  	$\alt{5}$	&	$\alt{4}\times \cyc{2}$	&	$\alt{4}$ & 2 \\ 	\hline
$\mathcal{Q}_2^9$ &  	$\sym{5}$	&	$\sym{4}\times \cyc{2}$	&	$\sym{4}$ & 2
\end{tabular}
\caption{Amalgams with non-soluble vertex stabilisers and $\vstx{1}=1$.}
\label{table:nonsolgx1=1}
\end{center}
\end{table}

From now on we  assume that $\vstx{1}\neq 1$. We have the following easy consequence.

\begin{prop}
\label{prop:isomorphismsofgx1}
There are isomorphisms $\vstx{1}\vsty{1}\cong\vstx{1}\times \vsty{1}$ and $\vstx{1}\cong\vstx{1}\vsty{1}/\vsty{1}$. The latter subgroup is a normal subgroup of $\gxy/\vsty{1}$.
\begin{proof}
The first isomorphism follows directly from Lemma \ref{lem:gxy1=1} and the normality of both $\vstx{1}$ and $\vsty{1}$ in $\gxy$. The second isomorphism follows from Lemma \ref{lem:gxy1=1} and an isomorphism theory. The second part follows since $\vst{x}{1}\vsty{1}$ is normal in $\ga$.
\end{proof}
\end{prop}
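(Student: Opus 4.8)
The plan is to obtain all three assertions as formal consequences of the standard isomorphism theorems, once a single substantive input is in place. Since this section operates under the hypothesis $s \le 3$, the contrapositive of Lemma~\ref{lem:gxy1=1} gives $\vst{xy}{1} = 1$, that is $\vstx{1} \cap \vsty{1} = 1$. I would also record at the outset the routine fact that $\vstx{1}$ is precisely the kernel of the action of $\gx$ on $\del(x)$, whence $\vstx{1} \nml \gx$; restricting to $\gxy \le \gx$ yields $\vstx{1} \nml \gxy$, and by the symmetric argument $\vsty{1} \nml \gxy$.

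For the first isomorphism I would use that $\vstx{1}$ and $\vsty{1}$ are normal subgroups of $\gxy$ meeting trivially. Then $[\vstx{1},\vsty{1}] \le \vstx{1} \cap \vsty{1} = 1$, so the two subgroups commute elementwise and their product is an internal direct product, giving $\vstx{1}\vsty{1} \cong \vstx{1} \times \vsty{1}$. For the second isomorphism I would apply the second isomorphism theorem inside $\gxy$: as $\vsty{1} \nml \gxy$, we have $\vstx{1}\vsty{1}/\vsty{1} \cong \vstx{1}/(\vstx{1}\cap\vsty{1}) = \vstx{1}/1 = \vstx{1}$.

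For the normality statement, the cleanest route is to prove the stronger fact that $\vstx{1}\vsty{1} \nml \ga$. An element of $\ga = G_e$ either fixes both $x$ and $y$ or interchanges them: in the first case it normalises each of $\vstx{1}$ and $\vsty{1}$ (conjugating $G_x^{[1]}$ to $G_{x^g}^{[1]} = G_x^{[1]}$, and likewise for $y$), while in the second it swaps $\vstx{1}$ and $\vsty{1}$. In either case it stabilises the product, so $\vstx{1}\vsty{1} \nml \ga$, and in particular $\vstx{1}\vsty{1} \nml \gxy$. Since also $\vsty{1} \nml \gxy$, the correspondence theorem delivers $\vstx{1}\vsty{1}/\vsty{1} \nml \gxy/\vsty{1}$.

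The only genuine obstacle here is the very first step: one must invoke the hypothesis $s \le 3$ to extract $\vst{xy}{1} = 1$ from Lemma~\ref{lem:gxy1=1}, as this trivial intersection underpins both isomorphisms. Everything after that is a formal manipulation of normal subgroups, and the normality claim merely requires the observation that $\ga$ permutes the pair $\{x,y\}$.
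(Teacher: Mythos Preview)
Your proof is correct and follows essentially the same approach as the paper's: both invoke Lemma~\ref{lem:gxy1=1} (via the standing hypothesis $s\le 3$) to obtain $\vstx{1}\cap\vsty{1}=1$, derive the two isomorphisms from normality in $\gxy$ together with the second isomorphism theorem, and deduce the normality claim from $\vstx{1}\vsty{1}\nml\ga$. You supply more detail than the paper does---in particular, the explicit argument that elements of $\ga$ either fix or swap $\{x,y\}$ and hence preserve the product---but this is elaboration rather than a different method.
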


\begin{lem}
\label{lem:cent of gx1gy1}
We have $\cent{\ga}{\vstx{1}\vsty{1}} =\cent{\gxy}{\vstx{1}\vsty{1}}=\cent{\gx}{\vstx{1}\vsty{1}}=\zent{\vstx{1}\vsty{1}}$. 
\begin{proof}
Set $C_e=\cent{\ga}{\vstx{1}\vsty{1}}$ and $C_x=\cent{\gx}{\vstx{1}\vsty{1}}$. The first equality will follow once we have shown $C \leq \gxy$. If this were not the case, then $C$ contains an element permuting $x$ and $y$ non-trivially. Also we see that $[C,\vstx{1}]\leq[C,\vstx{1}\vsty{1}]=1$, hence $\vstx{1}$ is a normal subgroup of $\grp{\gx,C}$, which acts transitively on $\gam$. Then Proposition \ref{prop:fund prop of gam} (ii) forces $\vstx{1}=1$, a contradiction. Now $\zent{\vstx{1}\vsty{1}}\leq C_e \leq C_x$, so it remains to see that the latter subgroup is contained in $ \vstx{1}\vsty{1}$. Using Proposition \ref{prop:isomorphismsofgx1} and the isomorphisms $\gx/\vstx{1}\cong\alt{5}$ or $\gx/\vstx{1}\cong\sym{5}$, we see that normal subgroups of $\gxy/\vstx{1}$ contain their centralisers in $\gx/\vstx{1}$, therefore 
\[ \begin{split}
C_x\vstx{1}/\vstx{1} \leq \cent{\gx/\vstx{1}}{\vstx{1}\vsty{1}/\vstx{1}}\leq \vstx{1}\vsty{1}/\vstx{1}
\end{split} \]
and so $C_x \leq C_x\vstx{1} \leq \vstx{1}\vsty{1}$ as required.
\end{proof}
\end{lem}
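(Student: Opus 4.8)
The plan is to prove the four-fold equality by reducing it to two reverse containments, the forward inclusions being immediate. Since $\vstx{1}\vsty{1}$ is a subgroup of $\gxy$, and hence of both $\gx$ and $\ga$, we have at once
\[
\zent{\vstx{1}\vsty{1}} \leq \cent{\gxy}{\vstx{1}\vsty{1}} \leq \cent{\ga}{\vstx{1}\vsty{1}}, \qquad \cent{\gxy}{\vstx{1}\vsty{1}} \leq \cent{\gx}{\vstx{1}\vsty{1}}.
\]
Writing $C_e = \cent{\ga}{\vstx{1}\vsty{1}}$ and $C_x = \cent{\gx}{\vstx{1}\vsty{1}}$, it therefore suffices to prove (a) $C_e \leq \gxy$ and (b) $C_x \leq \vstx{1}\vsty{1}$. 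Indeed (a) yields $C_e = \cent{\gxy}{\vstx{1}\vsty{1}}$ together with $C_e \leq C_x$, while (b) forces $C_x = \zent{\vstx{1}\vsty{1}}$; combined with the displayed inclusions this collapses all four groups.

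For (a) I would argue by contradiction, applying Proposition \ref{prop:fund prop of gam}(ii) with $K = \vstx{1}$. Note first that $\vstx{1} \leq \gxy$ and, since $\vstx{1} \nml \gx$, the normaliser $\norm{\gx}{\vstx{1}} = \gx$ is transitive on $\del(x)$. If $C_e \not\leq \gxy$, then, as $\gxy$ has index $2$ in $\ga$ with the non-trivial coset interchanging $x$ and $y$, the group $C_e$ contains an element swapping $x$ and $y$; moreover $C_e$ centralises $\vstx{1} \leq \vstx{1}\vsty{1}$, so $C_e$ normalises $\vstx{1}$ and $\norm{\ga}{\vstx{1}}$ is transitive on $\{x,y\}$. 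Proposition \ref{prop:fund prop of gam}(ii) then gives $\vstx{1} = 1$, against the standing hypothesis $\vstx{1}\neq 1$; hence $C_e \leq \gxy$.

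For (b) I would pass to the quotient $\gx/\vstx{1}$, which is isomorphic to $\alt{5}$ or $\sym{5}$ by Proposition \ref{prop:solofgx} and Lemma \ref{lem:subgpsofs5}, with $\gxy/\vstx{1}$ the stabiliser of the point $y\in\del(x)$, so $\alt{4}$ or $\sym{4}$. Since $\vstx{1},\vsty{1}\nml\gxy$, the image $\vstx{1}\vsty{1}/\vstx{1}$ is a normal subgroup of this point stabiliser, and it is \emph{non-trivial} and isomorphic to $\vsty{1}$ by Proposition \ref{prop:isomorphismsofgx1} (using $\vstx{1}\neq 1$). As $C_x$ centralises $\vstx{1}\vsty{1}$, the image $C_x\vstx{1}/\vstx{1}$ lies in $\cent{\gx/\vstx{1}}{\vstx{1}\vsty{1}/\vstx{1}}$, so the task reduces to the group-theoretic fact that every non-trivial normal subgroup of a point stabiliser of $\alt{5}$ or $\sym{5}$ is self-centralising in the ambient group. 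This is checked by running through the normal subgroups of $\alt{4}$ and $\sym{4}$ — namely $V_4$, $\alt{4}$ and $\sym{4}$ — verifying that the centraliser in $\alt{5}$ or $\sym{5}$ lies in the subgroup itself: for $V_4$ one uses that its normaliser already lies in the point stabiliser, while the larger subgroups have trivial centraliser. Pulling this back gives $C_x\vstx{1}\leq\vstx{1}\vsty{1}$, and hence $C_x\leq\vstx{1}\vsty{1}$.

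I expect step (b) to be where care is most needed: before invoking the self-centralising property one must know that $\vstx{1}\vsty{1}/\vstx{1}$ really is a non-trivial normal subgroup of the point stabiliser, which is exactly where Proposition \ref{prop:isomorphismsofgx1} and the hypothesis $\vstx{1}\neq 1$ enter, and one must also keep both possibilities $\alt{5}$ and $\sym{5}$ for $\gx/\vstx{1}$ in play when listing normal subgroups. By comparison, step (a) is routine once one observes that $\vstx{1}$ is simultaneously centralised by $C_e$ and normal in $\gx$, so that $\grp{\gx, C_e}$ normalises it; the only thing to get right there is matching the transitivity conditions to the hypotheses of Proposition \ref{prop:fund prop of gam}(ii).
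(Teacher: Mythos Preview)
Your proposal is correct and follows essentially the same approach as the paper: both reduce to showing $C_e\leq\gxy$ (by contradiction, using that $C_e$ centralises $\vstx{1}$ so that Proposition~\ref{prop:fund prop of gam}(ii) would force $\vstx{1}=1$) and $C_x\leq\vstx{1}\vsty{1}$ (by passing to $\gx/\vstx{1}\cong\alt{5}$ or $\sym{5}$ and using that non-trivial normal subgroups of the point stabiliser contain their centralisers). Your write-up is slightly more explicit than the paper's in spelling out the case check for step~(b) and in matching the transitivity hypotheses of Proposition~\ref{prop:fund prop of gam}(ii), but the underlying argument is identical.
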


\begin{lem}
\label{lem:ga acts faithfully}
 The group $\vstx{1}$ is isomorphic to either $\alt{4}$ or $\sym{4}$. Moreover, $\ga$ acts faithfully on $\vstx{1}\vsty{1}$ by conjugation.
\begin{proof}
Proposition \ref{prop:isomorphismsofgx1} shows that $\vstx{1}$ is isomorphic to one of $2^2$, $\alt{4}$ and $\sym{4}$. Let us assume $\vstx{1}\cong 2^2$. Then Lemma \ref{lem:cent of gx1gy1} gives $\vstx{1}\vsty{1} = \cent{\gx}{\vstx{1}\vsty{1}}\leq\cent{\gx}{\vstx{1}}$. Since $\vstx{1}<\cent{\gx}{\vstx{1}}\nml \gx$ we find that elements of order 3 in $\gx$ centralise $\vstx{1}$. In particular, all elements of order 3 in $\gxy$ centralise $\vstx{1}$. Passing to $\gxy/\vsty{1}$ we see that elements of order 3 here centralise $\vstx{1}\vsty{1}/\vsty{1}$, which produces a subgroup isomorphic to $2^2 \times 3$. But $\gxy/\vsty{1}$ contains no such subgroup, a contradiction.

We now have $\zent{\vstx{1}\vsty{1}}=1$, so Lemma \ref{lem:cent of gx1gy1} provides the final statement.
\end{proof}
\end{lem}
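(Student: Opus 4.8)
The plan is to prove the two assertions of Lemma~\ref{lem:ga acts faithfully} in sequence, first narrowing the isomorphism type of $\vstx{1}$ and then deducing faithfulness from the resulting triviality of the centre. Proposition~\ref{prop:isomorphismsofgx1} tells us that $\vstx{1}$ is normal in $\gxy/\vsty{1}$, and this latter group sits (via its natural projection) inside $\gxy/\vsty{1}\cong\gxy/\vstx{1}$, whose structure is controlled by the fact that $\gx/\vstx{1}$ has a normal subgroup isomorphic to $\alt{5}$. The point-stabiliser $\gxy/\vstx{1}$ is then a subgroup of $\alt{4}$ or $\sym{4}$, and since $\vstx{1}\cong\vstx{1}\vsty{1}/\vsty{1}$ is a normal subgroup of this point-stabiliser, the only possibilities for $\vstx{1}$ are $2^2$, $\alt{4}$ and $\sym{4}$.

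The substance of the first assertion is therefore to eliminate the case $\vstx{1}\cong 2^2$, and this is where I would concentrate the real work. I would argue by contradiction exactly as sketched: assuming $\vstx{1}\cong 2^2$, Lemma~\ref{lem:cent of gx1gy1} identifies $\vstx{1}\vsty{1}$ with $\cent{\gx}{\vstx{1}\vsty{1}}$, which is contained in $\cent{\gx}{\vstx{1}}$. Because $\cent{\gx}{\vstx{1}}$ is normal in $\gx$ and strictly contains the abelian group $\vstx{1}$ (as $\vstx{1}$ is a proper normal subgroup of $\gx$ whose centraliser, being normal, cannot equal $\vstx{1}$ itself), the elements of order $3$ in $\gx$ must centralise $\vstx{1}$. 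The key obstacle is turning this into a concrete contradiction inside a group whose order we only know up to the factorisation $2^a\cdot3^b\cdot5$; the cleanest route is to pass to the quotient $\gxy/\vsty{1}$, where the image of an order-$3$ element centralising $\vstx{1}\vsty{1}/\vsty{1}\cong 2^2$ would generate a subgroup isomorphic to $2^2\times\cyc{3}$. But $\gxy/\vsty{1}$ embeds in $\sym{4}$ (as a point-stabiliser in $\sym{5}$), and $\sym{4}$ contains no subgroup isomorphic to $2^2\times\cyc{3}$ since its elements of order $3$ act nontrivially on every Klein four-subgroup. This contradiction forces $\vstx{1}\in\{\alt{4},\sym{4}\}$.

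With $\vstx{1}$ thus identified as $\alt{4}$ or $\sym{4}$, both of which have trivial centre, the second assertion follows almost immediately. Since $\vstx{1}\vsty{1}\cong\vstx{1}\times\vsty{1}$ by Proposition~\ref{prop:isomorphismsofgx1}, its centre is $\zent{\vstx{1}}\times\zent{\vsty{1}}=1$. Lemma~\ref{lem:cent of gx1gy1} then gives $\cent{\ga}{\vstx{1}\vsty{1}}=\zent{\vstx{1}\vsty{1}}=1$, which is precisely the statement that the conjugation action of $\ga$ on $\vstx{1}\vsty{1}$ is faithful. I expect the elimination of the $2^2$ case to be the only genuine difficulty; the rest is a matter of invoking the earlier lemmas and recording that centreless normal subgroups yield faithful conjugation actions.
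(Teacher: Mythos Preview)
Your proposal is correct and follows essentially the same route as the paper's proof: reduce to the three possibilities $2^2$, $\alt{4}$, $\sym{4}$ via Proposition~\ref{prop:isomorphismsofgx1}, eliminate $2^2$ by showing $\cent{\gx}{\vstx{1}}$ is a proper overgroup of $\vstx{1}$ so that order-$3$ elements of $\gx$ lie in it, then manufacture a $2^2\times\cyc{3}$ inside $\gxy/\vsty{1}\leq\sym{4}$ for the contradiction, and finish with Lemma~\ref{lem:cent of gx1gy1} once the centre is trivial. One small caution: your parenthetical justification for the strict inequality $\vstx{1}<\cent{\gx}{\vstx{1}}$ (``being normal, cannot equal $\vstx{1}$ itself'') is not valid as stated, since a normal Klein four-group can certainly be self-centralising (e.g.\ in $\sym{4}$); the correct reason is the one you have already recorded immediately before, namely $\vstx{1}\vsty{1}\leq\cent{\gx}{\vstx{1}}$ together with $\vsty{1}\neq1$.
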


We define $A=\aut{\vstx{1}\vsty{1}}\cong\sym{4}\wr 2$. This isomorphism follows from the observation there are exactly two normal subgroups isomorphic to $\alt{4}$ in $\vstx{1}\vsty{1}$. Lemma \ref{lem:cent of gx1gy1} allows us to identify $\ga$ with a subgroup of $A$. Note that $\oP{2}{A}\cong\alt{4}\times\alt{4}$ and by Lemma \ref{lem:cent of gx1gy1} $\oP{2}{A} \leq \vstx{1}\vsty{1}$. Thus we see $\ga/\oP{2}{A}$ in the quotient $A/\oP{2}{A}\cong\dih{8}$. We use these observations below.

\begin{lem}
Suppose that $\gx/\vstx{1} \cong \alt{5}$. Then $\gx \cong \alt{5}\times \alt{4}$ and $\ga \cong \alt{4}\wr 2$.
\begin{proof}
Lemma \ref{lem:ga acts faithfully} gives $\vstx{1}\cong \alt{4}$. Since $\cent{\gx}{\vstx{1}}\cap \vstx{1}=1$, we see that $\cent{\gx}{\vstx{1}}$ is either trivial or isomorphic to $\alt{5}$. Since $\gx/\cent{\gx}{\vstx{1}}$ embeds into $\sym{4}$, we have $\gx=\cent{\gx}{\vstx{1}}\vstx{1} \cong \alt{5}\times\alt{4}$. Now $\gxy=\vstx{1}\vsty{1}\cong\alt{4}\times\alt{4}$, and so $\ga \cong \alt{4}\wr \cyc{2}$.
\end{proof}
\end{lem}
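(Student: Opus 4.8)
The plan is to identify $\vstx{1}$ first, then bootstrap to $\gx$ using the simplicity of $\alt{5}$, and finally read off $\ga$ from its faithful conjugation action on $\gxy$. To pin down $\vstx{1}$, note that Lemma~\ref{lem:ga acts faithfully} already tells us $\vstx{1}$ is $\alt{4}$ or $\sym{4}$, so the only work is to exclude $\sym{4}$. Since $\gx/\vstx{1}\cong\alt{5}$ acts on the five vertices of $\del(x)$, the image $\gxy/\vstx{1}$ is a point stabiliser and hence isomorphic to $\alt{4}$. By Proposition~\ref{prop:isomorphismsofgx1}, $\vsty{1}\cong\vstx{1}\vsty{1}/\vstx{1}$ is a subgroup of $\gxy/\vstx{1}\cong\alt{4}$, so $|\vstx{1}|=|\vsty{1}|\leq 12$ and therefore $\vstx{1}\cong\alt{4}$. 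Comparing orders then gives $|\gxy|=12\,|\vstx{1}|=144=|\vstx{1}\vsty{1}|$, whence $\gxy=\vstx{1}\vsty{1}\cong\alt{4}\times\alt{4}$.

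Next I would determine $\gx$ through its conjugation action on the normal subgroup $\vstx{1}\cong\alt{4}$, giving a homomorphism $\gx\to\aut{\vstx{1}}\cong\sym{4}$ with kernel $\cent{\gx}{\vstx{1}}$. As $\zent{\alt{4}}=1$ we have $\cent{\gx}{\vstx{1}}\cap\vstx{1}=1$, so $\cent{\gx}{\vstx{1}}\cong\cent{\gx}{\vstx{1}}\vstx{1}/\vstx{1}$, which is normal in $\gx/\vstx{1}\cong\alt{5}$. Simplicity of $\alt{5}$ forces $\cent{\gx}{\vstx{1}}$ to be trivial or isomorphic to $\alt{5}$; the trivial case would embed $\gx$ into $\sym{4}$, impossible since $|\gx|\geq 60$. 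Hence $\cent{\gx}{\vstx{1}}\cong\alt{5}$, its image is all of $\gx/\vstx{1}$ so $\cent{\gx}{\vstx{1}}\vstx{1}=\gx$, and as the two normal subgroups commute and intersect trivially we obtain $\gx\cong\alt{5}\times\alt{4}$.

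Finally I would identify $\ga$. Lemma~\ref{lem:ga acts faithfully} lets me view $\ga$ as a subgroup of $A=\aut{\gxy}\cong\sym{4}\wr 2$, inside which $\gxy\cong\inn{\gxy}=\oP{2}{A}\cong\alt{4}\times\alt{4}$ has index $2$; the nontrivial coset of $\gxy$ in $\ga$ contains an element swapping $x$ and $y$, hence interchanging the factors $\vstx{1}$ and $\vsty{1}$. The only step requiring genuine computation is to show the extension $1\to\alt{4}\times\alt{4}\to\ga\to\cyc{2}\to 1$ splits. For this I would take any $t\in\ga\setminus\gxy$; conjugation by $t$ swaps the factors, so $t^2\in\gxy$ is fixed by the swap and thus lies in the diagonal subgroup $\{(g,g)\}$. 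Writing $t^2=(g,g)$ after identifying $\vsty{1}$ with $\vstx{1}$ through $t$, the element $t(g^{-1},1)$ is an involution, providing a complement to $\gxy$, so $\ga\cong\alt{4}\wr\cyc{2}$. I expect this splitting to be the main (albeit mild) obstacle, everything else reducing to order counts together with the simplicity of $\alt{5}$ and the triviality of $\zent{\alt{4}}$.
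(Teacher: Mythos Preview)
Your argument is correct and follows the same route as the paper. The only minor differences are in detail: the paper cites Lemma~\ref{lem:ga acts faithfully} for $\vstx{1}\cong\alt{4}$ without spelling out (as you do) that $\sym{4}$ is excluded because $\vstx{1}$ embeds in $\gxy/\vsty{1}\cong\alt{4}$; and for the final step the paper leans on the embedding of $\ga$ into $\aut{\gxy}\cong\sym{4}\wr 2$ set up just after Lemma~\ref{lem:ga acts faithfully}, whereas you construct a complementary involution directly.
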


\begin{lem}
Suppose that $\gx/\vstx{1} \cong \sym{5}$ and $\vstx{1}\cong \sym{4}$. Then $\gx \cong \sym{5}\times \sym{4}$ and $\ga \cong \sym{4}\wr 2$.
\begin{proof}
Since $\vstx{1}\cap \cent{\gx}{\vstx{1}} =1$, and $\vstx{1}$ is isomorphic to $\aut{\vstx{1}}$, we have $\gx=\cent{\gx}{\vstx{1}}\vstx{1}$ and so $\cent{\gx}{\vstx{1}}\cong\sym{5}$. Now $\gxy=\vstx{1}\vsty{1} \cong \sym{4}\times\sym{4}$ and so $\ga\cong\aut{\gxy}$.
\end{proof}
\end{lem}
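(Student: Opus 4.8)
The plan is to identify $\gx$ as an internal direct product and then to realise $\ga$ inside $\aut{\gxy}$, finishing by a comparison of orders. The key input is that $\vstx{1}\cong\sym{4}$ is a complete group: it is centreless and all its automorphisms are inner, so $\aut{\vstx{1}}\cong\vstx{1}$. For the vertex stabiliser, I would consider the conjugation homomorphism $\gx\to\aut{\vstx{1}}$ with kernel $C_x:=\cent{\gx}{\vstx{1}}$. Since $\zent{\vstx{1}}=1$ we get $C_x\cap\vstx{1}=1$, and since the image of $\vstx{1}$ already exhausts the inner (hence all) automorphisms, $\gx=C_x\vstx{1}$. As $C_x$ and $\vstx{1}$ are normal in $\gx$, commute, and meet trivially, $\gx=C_x\times\vstx{1}$; then $C_x\cong\gx/\vstx{1}\cong\sym{5}$ yields $\gx\cong\sym{5}\times\sym{4}$.

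For the edge stabiliser I would first pin down $\gxy$. The quotient $\gxy/\vstx{1}$ is the stabiliser of a point in $\gx/\vstx{1}\cong\sym{5}$ acting on the five neighbours of $x$, hence isomorphic to $\sym{4}$, so $|\gxy|=24^2$. By Proposition \ref{prop:isomorphismsofgx1} together with $\vst{xy}{1}=1$, the normal subgroup $\vstx{1}\vsty{1}\le\gxy$ has order $|\vstx{1}||\vsty{1}|=24^2$ as well, and so $\gxy=\vstx{1}\vsty{1}\cong\sym{4}\times\sym{4}$. Now $|\ga|=2|\gxy|=1152$, and Lemma \ref{lem:ga acts faithfully} gives a faithful conjugation action of $\ga$ on $\gxy$, embedding $\ga$ into $A=\aut{\gxy}\cong\sym{4}\wr 2$. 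Since $|A|=1152$ as well, the orders agree and $\ga\cong\aut{\gxy}\cong\sym{4}\wr 2$.

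The main obstacle is the clean identification $\gxy=\vstx{1}\vsty{1}$; everything else reduces to the completeness of $\sym{4}$ and an order count. I would be careful that $|\aut{\sym{4}\times\sym{4}}|=1152$, invoking the structure $A\cong\sym{4}\wr 2$ already obtained from the two normal $\alt{4}$-subgroups of $\gxy$ rather than recomputing it from scratch.
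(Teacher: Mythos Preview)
Your proof is correct and follows essentially the same approach as the paper: both use the completeness of $\sym{4}$ to obtain $\gx=\cent{\gx}{\vstx{1}}\times\vstx{1}$, then identify $\gxy=\vstx{1}\vsty{1}\cong\sym{4}\times\sym{4}$, and finally invoke Lemma~\ref{lem:ga acts faithfully} to embed $\ga$ into $\aut{\gxy}\cong\sym{4}\wr 2$ and finish by comparing orders. Your version simply spells out the order computation justifying $\gxy=\vstx{1}\vsty{1}$ more carefully than the paper's terse statement.
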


Finally, we have to deal with the possibility that $\gx/\vstx{1}\cong \sym{5}$ and $\vstx{1}\cong \alt{4}$. There are two types of amalgam which have this property. 

\begin{lem}
Suppose  $\gx/\vstx{1} \cong \sym{5}$ and $\vstx{1}\cong \alt{4}$. Then $\gx \cong \sym{5}\foo{scale=.15}\sym{4}$.
\begin{proof}
Let $C=\cent{\gx}{\vstx{1}}$. Then $C \cap \vstx{1} =1$, but $\vsty{1}\leq C$, so either $C\cong \alt{5}$ or $C\cong \sym{5}$ and $\gx=C\vstx{1}$ holds. Assuming $\gx=C\vstx{1}$ we find $\gxy=\vstx{1}\cent{\gxy}{\vstx{1}}$. But now $\vsty{1}\leq \cent{\gxy}{\vstx{1}}$, so $\gxy/\vsty{1}  \cong \alt{4} \times 2$, which is a contradiction to $\gy/\vsty{1}\cong\gx/\vstx{1}\cong\sym{5}$. Hence $C\vstx{1} \cong \alt{5} \times \alt{4}$ and has index 2 in $\gx$. Since $\gx/C$ embeds into $\aut{\vstx{1}}$ we have $\gx/C \cong \sym{4}$. Thus $\gx$ has a  subgroup of index 2 isomorphic to $\alt{5} \times \alt{4}$ and quotients isomorphic to $\sym{5}$ and $\sym{4}$. It follows that $\gx \cong \sym{5} \foo{scale=.15} \sym{4}$.
\end{proof}
\end{lem}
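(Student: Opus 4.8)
The plan is to exhibit the structure of $\gx$ by first isolating the centraliser $C=\cent{\gx}{\vstx{1}}$ and understanding how it meets $\vstx{1}$. By Lemma \ref{lem:ga acts faithfully} we have $\vstx{1}\cong\alt{4}$, and since $\vstx{1}$ is nonabelian with trivial centre, $C\cap\vstx{1}=\zent{\vstx{1}}=1$. I would then determine $C$ up to isomorphism. Because $\vsty{1}\leq C$ (as $[\vstx{1},\vsty{1}]=1$ by Proposition \ref{prop:isomorphismsofgx1}) and $\vsty{1}\cong\alt{4}$, the subgroup $C$ is nontrivial; moreover $C\vstx{1}/\vstx{1}$ is a normal subgroup of $\gx/\vstx{1}\cong\sym{5}$, so $C$ maps onto a normal subgroup of $\sym{5}$ containing the image of $\alt{4}$, forcing $C\cong\alt{5}$ or $C\cong\sym{5}$.

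Next I would rule out the possibility $\gx=C\vstx{1}$ (equivalently, the case $C\cong\sym{5}$ giving a direct-product decomposition). The key observation is that $\gx=C\vstx{1}$ would force, upon intersecting with $\gxy$ and using $C\cap\vstx{1}=1$, a decomposition $\gxy=\vstx{1}\,\cent{\gxy}{\vstx{1}}$. Since $\vsty{1}\leq\cent{\gxy}{\vstx{1}}$, passing to the quotient by $\vsty{1}$ would yield $\gxy/\vsty{1}\cong\alt{4}\times\cyc{2}$, because $\vstx{1}$ maps isomorphically while the complementary centralising factor contributes only the central involution from $\cent{\gxy}{\vstx{1}}/\vsty{1}$. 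This contradicts $\gy/\vsty{1}\cong\gx/\vstx{1}\cong\sym{5}$, since $\gxy/\vsty{1}$ is a point stabiliser in $\sym{5}$ and hence isomorphic to $\sym{4}$, not $\alt{4}\times\cyc{2}$. This step is the main obstacle: one must be careful that the centralising complement, while abstractly $\sym{5}$ inside $\gx$, contributes only an involution to the relevant section $\gxy/\vsty{1}$, so that the numerology genuinely clashes.

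Having excluded $\gx=C\vstx{1}$, I conclude $C\cong\alt{5}$ and $C\vstx{1}\cong\alt{5}\times\alt{4}$. I would then show this has index $2$ in $\gx$: since $\gx/C$ embeds into $\aut{\vstx{1}}\cong\sym{4}$ and the image contains the inner automorphisms induced by $\vstx{1}$ (a copy of $\alt{4}$, as $\vstx{1}$ has trivial centre), the quotient $\gx/C$ is either $\alt{4}$ or $\sym{4}$; the former would give $\gx=C\vstx{1}$, already excluded, so $\gx/C\cong\sym{4}$ and $|\gx:C\vstx{1}|=2$.

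Finally I would assemble the isomorphism type. We now have $\gx$ with a normal subgroup $C\vstx{1}\cong\alt{5}\times\alt{4}$ of index $2$, together with the two quotient maps: $\gx\twoheadrightarrow\gx/\vstx{1}\cong\sym{5}$ and $\gx\twoheadrightarrow\gx/C\cong\sym{4}$. Thus $\gx$ embeds in $\sym{5}\times\sym{4}$ via these two projections, its image is an index-$2$ subgroup containing $\alt{5}\times\alt{4}$, and neither $\sym{5}$ nor $\sym{4}$ is a direct factor (else $\vstx{1}$ or $C$ would be normal of the wrong size). By the defining property of $\sym{5}\foo{scale=.15}\sym{4}$ recalled in the introduction as the unique such index-$2$ subgroup, we obtain $\gx\cong\sym{5}\foo{scale=.15}\sym{4}$, completing the proof.
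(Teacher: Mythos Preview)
Your proof is correct and follows essentially the same approach as the paper: set $C=\cent{\gx}{\vstx{1}}$, show $C\cong\alt{5}$ or $\sym{5}$, exclude the latter via the contradiction $\gxy/\vsty{1}\cong\alt{4}\times\cyc{2}$, then use the two quotients $\gx/\vstx{1}\cong\sym{5}$ and $\gx/C\cong\sym{4}$ together with the index-$2$ subgroup $\alt{5}\times\alt{4}$ to pin down $\gx$. Your version simply spells out a few steps (normality of $C\vstx{1}/\vstx{1}$ in $\sym{5}$, the embedding into $\sym{5}\times\sym{4}$) that the paper leaves implicit.
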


Here there are two different types of amalgams, corresponding to two different possibilities for $\ga$. These groups differ in the isomorphism type of $\ga/\vstx{1}\vsty{1}$, which has order 4, but is either cyclic or elementary abelian. 

\begin{lem}
\label{lem:defnofl1l2}
Suppose that $\gx/\vstx{1}\cong\sym{5}$ and $\vstx{1}\cong\alt{4}$. Then $\ga$ is isomorphic to one of 
\begin{itemize}
\item[] $\mathrm{L}_1 =\grp{(1,2,3),(2,3,4),(5,6,7),(6,7,8),(1,2)(5,6),(1,5)(2,6)(3,7)(4,8)}$,
\item[] $\mathrm{L}_2=\grp{(1,2,3),(2,3,4),(5,6,7),(6,7,8),(1,6,2,5)(3,7)(4,8)}$.
\end{itemize}
\begin{proof}
Comparing orders, we see that $|\ga:\vstx{1}\vsty{1}|=4$. By Lemma \ref{lem:ga acts faithfully} and the remarks following, $\ga$ can be identified with a subgroup of index two in $A$ which contains the characteristic subgroup $\oP{2}{A}$. There are precisely three of these, $\mathrm{L}_1$ and $\mathrm{L}_2$ above and $\mathrm{L}_3 \cong \sym{4}\times\sym{4}$. Identifying $\vstx{1}$ with its image in $A$ we see $\vstx{1}\nml \mathrm{L}_3$, so we must have $\ga \cong \mathrm{L}_1$ or $\ga\cong \mathrm{L}_2$.
\end{proof}
\end{lem}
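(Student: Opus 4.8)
The plan is to pin down the order of $\ga$, embed $\ga$ faithfully into $A=\aut{\vstx{1}\vsty{1}}$, list the three index-two subgroups of $A$ containing $\oP{2}{A}$, and then eliminate the single candidate in which $\vstx{1}$ is normal.

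First I would record the orders. The preceding lemma identifies $\gx$, so $|\gx|=1440$ and hence $|\gxy|=|\gx|/5=288$ and $|\ga|=2|\gxy|=576$. Since $\vstx{1}\cong\vsty{1}\cong\alt{4}$, Proposition \ref{prop:isomorphismsofgx1} gives $\vstx{1}\vsty{1}\cong\alt{4}\times\alt{4}$ of order $144$, so $|\ga:\vstx{1}\vsty{1}|=4$. By Lemma \ref{lem:ga acts faithfully} the conjugation action of $\ga$ on $\vstx{1}\vsty{1}$ is faithful, so $\ga$ embeds into $A\cong\sym{4}\wr 2$; as noted just before the lemma, $\oP{2}{A}\cong\alt{4}\times\alt{4}$ lies in $\vstx{1}\vsty{1}$, and equality holds by order, so under this embedding $\vstx{1}\vsty{1}$ is identified with the characteristic subgroup $\oP{2}{A}$.

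Next I would locate $\ga$ inside $A$. Since $|A|=1152=2|\ga|$ and $\oP{2}{A}=\vstx{1}\vsty{1}\leq\ga$, the image of $\ga$ is an index-two subgroup of $A$ containing $\oP{2}{A}$; equivalently $\ga/\oP{2}{A}$ is an index-two subgroup of $A/\oP{2}{A}\cong\dih{8}$. As $\dih{8}$ has exactly three subgroups of order four, there are exactly three candidates for $\ga$, and a short computation identifies them with $\mathrm{L}_1$, $\mathrm{L}_2$ and $\mathrm{L}_3\cong\sym{4}\times\sym{4}$; here $\mathrm{L}_3$ is the one contained in the base subgroup $\sym{4}\times\sym{4}$ of $A$, whereas $\mathrm{L}_1$ and $\mathrm{L}_2$ meet the coset of $A$ that interchanges the two direct factors, giving the elementary abelian and cyclic possibilities for $\ga/\oP{2}{A}$ respectively.

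The decisive step is to discard $\mathrm{L}_3$, and this is where I expect the real content to lie. Because $\gam$ is $G$-symmetric, $\ga=G_e$ contains an element $g$ interchanging the two ends $x$ and $y$ of $e$; then $(\vstx{1})^g=\vst{x^g}{1}=\vsty{1}\neq\vstx{1}$, so $\vstx{1}$ is \emph{not} normal in $\ga$. On the other hand, under the embedding $\vstx{1}$ is one of the two $\alt{4}$-direct factors of $\oP{2}{A}$, and such a factor is normal in $\mathrm{L}_3\cong\sym{4}\times\sym{4}$. Hence $\ga\not\cong\mathrm{L}_3$, leaving $\ga\cong\mathrm{L}_1$ or $\ga\cong\mathrm{L}_2$. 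The only genuinely laborious point is the bookkeeping of the previous paragraph, namely checking that the two index-two subgroups of $A$ meeting the factor-swapping coset are exactly the permutation groups $\mathrm{L}_1$ and $\mathrm{L}_2$ displayed in the statement; the conceptual heart, by contrast, is simply the non-normality of $\vstx{1}$ forced by the edge-swapping element.
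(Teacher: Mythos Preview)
Your proposal is correct and follows essentially the same route as the paper: compute $|\ga:\vstx{1}\vsty{1}|=4$, embed $\ga$ as an index-two subgroup of $A\cong\sym{4}\wr\cyc{2}$ containing $\oP{2}{A}$, list the three candidates, and rule out $\mathrm{L}_3\cong\sym{4}\times\sym{4}$ because $\vstx{1}$ is normal there while the edge-swapping element of $\ga$ prevents $\vstx{1}\nml\ga$. The paper's proof is terser---it leaves the non-normality of $\vstx{1}$ in $\ga$ implicit---but the argument is the same.
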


The final list that we have compiled in this section is in Table \ref{table:nonsolgxy1=1}.

\begin{table}[ht]
\begin{center}
\begin{tabular}{c|  c| c| c | c}
\hline \hline
Amalgam & 	$\gx$	& 	$\ga$	&	$\gxy$ & s\\
	\hline	\hline
$\mathcal{Q}_3^2$ &  	$\alt{5} \times \alt{4}$	&	$\alt{4}\wr \cyc{2}$	&	$\alt{4}\times\alt{4}$ & 3 \\ 	\hline
$\mathcal{Q}_3^3$ & 	$\sym{5}\foo{scale=.15}\sym{4}$	&	$\mathrm{L}_1$	&	$\sym{4}\foo{scale=.15}\sym{4}$ & 3\\ 	\hline
$\mathcal{Q}_3^4$ &	 	$\sym{5}\foo{scale=.15}\sym{4}$	&	$\mathrm{L}_2$	&	$\sym{4}\foo{scale=.15}\sym{4}$ & 3 \\ 	\hline
$\mathcal{Q}_3^5$ & 	$\sym{5}\times\sym{4}$	&	$\sym{4}\wr \cyc{2}$	&	$\sym{4}\times\sym{4}$ & 3 
\end{tabular}
\caption{Amalgams with non-soluble vertex stabilisers, $\vst{xy}{1}=1$ and $\vstx{1}\neq 1$.}
\label{table:nonsolgxy1=1}
\end{center}
\end{table}

Note that so far, even though we have given the amalgams a name, we have not determined how many amalgams each type determines. This problem is addressed in the next section.

\section{Uniqueness and Presentations}
\label{sec:uniandpres}

Using Goldschmidt's Lemma we verify that each of the primitive amalgams we have found is unique. We begin with the amalgams found last, those in Table \ref{table:nonsolgxy1=1}.

\begin{lem}
Let $\mathcal{A}=(\gx,\ga,\gxy)$ be an amalgam from Table \ref{table:nonsolgxy1=1}. Then $\mathcal{A}$ is the unique amalgam of this type.
\begin{proof}
We leave the reader to verify that $\aut{\gx}\cong \sym{5}\times\sym{4}$ and $\aut{\ga}\cong\sym{4}\wr\cyc{2}$ for each of the amalgams. Then the image of $\norm{\aut{\gx}}{\gxy}$ in $\aut{\gxy}$ is the subgroup isomorphic to $\sym{4}\times\sym{4}$. Since there is an inner automorphism of $\ga$ which normalises $\gxy$ and swaps the factors, the image of this element in $\aut{\gxy}$ lies outside the $\sym{4}\times\sym{4}$ subgroup. Hence $\aut{\gxy}=A_1^*A_2^*$, so there is a unique amalgam of these types by the Goldschmidt Lemma.
\end{proof}
\end{lem}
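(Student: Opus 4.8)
The plan is to apply Goldschmidt's Lemma (Lemma~\ref{lem:golds lem}): the isomorphism classes of amalgams of type $(\gx,\ga,\gxy)$ are in bijection with the $(A_1^*,A_2^*)$-double cosets in $\aut{\gxy}$, so proving uniqueness amounts to showing there is a single such double coset, i.e.\ that $\aut{\gxy}=A_1^*A_2^*$. The whole argument therefore reduces to computing the two subgroups $A_1^*,A_2^*\leq\aut{\gxy}$ and checking that together they fill out $\aut{\gxy}$.

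First I would pin down the relevant automorphism groups. In every row of Table~\ref{table:nonsolgxy1=1} the group $\gx$ is a direct or subdirect product of some $X\in\{\alt{5},\sym{5}\}$ with some $Y\in\{\alt{4},\sym{4}\}$; since $X$ and $Y$ are directly indecomposable and non-isomorphic with $\aut{X}\cong\sym{5}$ and $\aut{Y}\cong\sym{4}$, one obtains $\aut{\gx}\cong\sym{5}\times\sym{4}$ (the subdirect case $\sym{5}\foo{scale=.15}\sym{4}$ needing only a routine extra check). Similarly each candidate for $\ga$ has $\aut{\ga}\cong\sym{4}\wr\cyc{2}$, which I would verify by noting that $\gxy\nml\ga$ has exactly two normal subgroups isomorphic to $\alt{4}$, so every automorphism either preserves or interchanges them.

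Next I would compute $A_1^*$. Inside $\gx$ the subgroup $\gxy$ meets the ``degree~$5$'' factor $X$ in a point-stabilising $\sym{4}$ (or $\alt{4}$) and contains the whole ``degree~$4$'' factor $Y$. An element $(\alpha,\beta)\in\sym{5}\times\sym{4}=\aut{\gx}$ normalises $\gxy$ precisely when $\alpha$ normalises that point stabiliser; since the normaliser in $\sym{5}$ of the point-stabilising $\sym{4}$ (resp.\ $\alt{4}$) is the point-stabilising $\sym{4}$, this yields $\norm{\aut{\gx}}{\gxy}=\sym{4}\times\sym{4}$, acting componentwise on the two factors of $\gxy$. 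Because $\cent{\sym{5}}{\sym{4}}=\cent{\sym{4}}{\alt{4}}=1$ the centraliser $\cent{\aut{\gx}}{\gxy}$ is trivial, whence $A_1^*=\pi_x^*(\norm{\aut{\gx}}{\gxy})\cong\sym{4}\times\sym{4}$ is exactly the factor-preserving subgroup of $\aut{\gxy}$. A short separate argument then shows this subgroup has index $2$ in $\aut{\gxy}$: the normal subgroup $\vstx{1}\vsty{1}\cong\alt{4}\times\alt{4}$ of index $2$ in $\gxy$ is self-centralising and centre-free, so any automorphism trivial on it is trivial, which forces $\aut{\gxy}\cong(\sym{4}\times\sym{4}):\cyc{2}$ with the non-trivial coset the factor swap.

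Finally, for $A_2^*$ I would exhibit an element of $\ga$ acting by conjugation as the swap of the two $\alt{4}$-factors of $\gxy$: the top involution in $\alt{4}\wr\cyc{2}$ and $\sym{4}\wr\cyc{2}$, and a generator interchanging the blocks $\{1,2,3,4\}$ and $\{5,6,7,8\}$ in $\mathrm{L}_1$ and $\mathrm{L}_2$. This is an inner automorphism of $\ga$ normalising $\gxy$, so its image under $\pi_e^*$ lies in $A_2^*$ and, being the factor swap, lies outside $A_1^*$. Combined with the index-$2$ statement above this gives $\aut{\gxy}=A_1^*\sqcup A_1^*\cdot(\text{swap})\subseteq A_1^*A_2^*$, hence $\aut{\gxy}=A_1^*A_2^*$ and the amalgam is unique. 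I expect the main obstacle to be the bookkeeping around $\aut{\gxy}$ in the subdirect case $\gxy\cong\sym{4}\foo{scale=.15}\sym{4}$: one must be certain that the componentwise $\sym{4}\times\sym{4}$ really is of index $2$, i.e.\ that there are no exotic automorphisms, which is exactly what the self-centralising $\alt{4}\times\alt{4}$ observation rules out.
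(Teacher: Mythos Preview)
Your proposal is correct and follows essentially the same route as the paper: compute $A_1^*$ as the factor-preserving $\sym{4}\times\sym{4}$ inside $\aut{\gxy}$, exhibit a factor-swapping inner automorphism of $\ga$ in $A_2^*$, and conclude $\aut{\gxy}=A_1^*A_2^*$ via Goldschmidt's Lemma. You supply more detail than the paper does (in particular the self-centralising $\alt{4}\times\alt{4}$ argument to pin down $\aut{\gxy}$ in the subdirect case), but the structure of the argument is identical.
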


The next cases are easier still.

\begin{lem}
Let $\mathcal{A}=(\gx,\ga,\gxy)$ be an amalgam from Table \ref{table:nonsolgx1=1}. Then $\mathcal{A}$ is the unique amalgam of this type.
\begin{proof}
Since $\aut{\gxy}\cong\sym{4}$ for each of these amalgams, $\cent{\aut{\gx}}{\gxy}=1$ and $\norm{\aut{\gx}}{\gxy}\cong\sym{4}$, we find $\aut{\gxy}=A_1^*$, so there is a unique amalgam.
\end{proof}
\end{lem}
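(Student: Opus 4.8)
The plan is to apply Goldschmidt's Lemma (Lemma \ref{lem:golds lem}), which reduces uniqueness of the amalgam to counting $(A_1^*,A_2^*)$-double cosets in $\aut{\gxy}$. For each of the three rows of Table \ref{table:nonsolgx1=1} the subgroup $\gxy$ is isomorphic either to $\alt{4}$ (in $\mathcal{Q}_2^7$ and $\mathcal{Q}_2^8$) or to $\sym{4}$ (in $\mathcal{Q}_2^9$), and in both cases $\aut{\gxy}\cong\sym{4}$: indeed $\aut{\alt{4}}\cong\sym{4}$, and since $\sym{4}$ is complete we also have $\aut{\sym{4}}\cong\sym{4}$. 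Since a single $(A_1^*,A_2^*)$-double coset gives a unique amalgam, I would establish uniqueness in the strongest way available, namely by showing that $A_1^*$ alone already exhausts $\aut{\gxy}$; then $A_1^*A_2^*=\aut{\gxy}$ trivially and the second factor $A_2^*$ never needs to be computed. This is precisely what makes the present lemma easier than its counterpart for Table \ref{table:nonsolgxy1=1}, where one genuinely needs an inner automorphism of $\ga$ to enlarge $A_1^*$ to all of $\aut{\gxy}$.

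The computation of $A_1^*$, the image of $\norm{\aut{\gx}}{\gxy}$ under $\pi_x^*$ (with kernel $\cent{\aut{\gx}}{\gxy}$), proceeds as follows. First I record the ambient automorphism groups of the vertex stabilisers: $\aut{\alt{5}}\cong\sym{5}$ and, $\sym{5}$ being complete, $\aut{\sym{5}}\cong\sym{5}$. In either case I identify $\aut{\gx}$ with $\sym{5}$ and $\gxy$ with a point stabiliser of the natural degree-five action (a copy of $\alt{4}$ or of $\sym{4}$ respectively). The two facts to verify are then (a) $\norm{\aut{\gx}}{\gxy}\cong\sym{4}$ and (b) $\cent{\aut{\gx}}{\gxy}=1$. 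Granting these, $\pi_x^*$ is an injective homomorphism from a group of order $24$ into $\aut{\gxy}\cong\sym{4}$ of order $24$, hence an isomorphism, so $A_1^*=\aut{\gxy}$ and the amalgam is unique.

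For (a), in the case $\gxy\cong\alt{4}$ the only faithful copies of $\alt{4}$ in $\sym{5}$ are the point stabilisers of $\alt{5}$: since $5\nmid|\alt{4}|$ no such copy is transitive, and a short analysis of the possible orbit shapes leaves only the type $\{4,1\}$. As $\sym{5}$ permutes these five copies transitively, their common normaliser has index $5$ and order $24$, giving $\norm{\sym{5}}{\alt{4}}\cong\sym{4}$. The case $\gxy\cong\sym{4}$ is immediate, since a point stabiliser $\sym{4}$ in $\sym{5}$ is maximal and not normal, hence self-normalising. For (b), no nontrivial element of $\sym{5}$ centralises all of $\gxy$, because $\gxy$ acts transitively on four of the five points with trivial centraliser there and determines the fifth point. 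The only real work is this elementary but slightly fiddly classification of the embeddings of $\alt{4}$ and $\sym{4}$ into $\sym{5}$ up to conjugacy together with the normaliser and centraliser computations in (a) and (b); once these are in hand, the invocation of Lemma \ref{lem:golds lem} is pure bookkeeping.
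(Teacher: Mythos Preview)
Your proposal is correct and follows exactly the same route as the paper's proof: you show $A_1^*=\aut{\gxy}$ by verifying $\aut{\gxy}\cong\sym{4}$, $\norm{\aut{\gx}}{\gxy}\cong\sym{4}$ and $\cent{\aut{\gx}}{\gxy}=1$, then invoke Goldschmidt's Lemma. The only difference is that you spell out the normaliser and centraliser computations in $\sym{5}$ explicitly, whereas the paper states them without justification.
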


In the next two lemmas we need to add the word ``primitive" to make a statement about uniqueness.

\begin{lem}
Let $\mathcal{A}=(\gx,\ga,\gxy)$ be an amalgam from row 1 of Table \ref{table:solgxy1}. There are two isomorphism classes of amalgams of this type, precisely one is primitive.
\begin{proof}
We see that $\aut{\gxy}\cong\sym{3}$. After choosing a labelling, one finds that $A_1^*$ is the subgroup $\grp{(1,2)}$ and that $A_2^*=\grp{(2,3)}$. Hence there are two $(A_1^*,A_2^*)$ double cosets in $\aut{\gxy}$. For both of these amalgams we have $\zent{\gx}\zent{\ga}\leq \gxy$, but the primitive amalgam has $\zent{\gx} \cap \zent{\ga}=1$, and the non-primitive amalgam has $\zent{\gx}=\zent{\ga}$.
\end{proof}
\end{lem}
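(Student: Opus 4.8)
The plan is to apply Goldschmidt's Lemma (Lemma \ref{lem:golds lem}) to the type $(\gx,\ga,\gxy)=(\dih{20},\dih{8},2^2)$ and then to separate the resulting amalgams by a centre computation. Since $\gxy\cong 2^2$ we have $\aut{\gxy}\cong\GL{2}{2}\cong\sym{3}$, acting faithfully on the three involutions of $\gxy$, so the first task is to locate $A_1^*$ and $A_2^*$ inside this copy of $\sym{3}$.

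First I would compute $A_1^*$. Writing $\gx=\dih{20}=\grp{r,s}$, the subgroup $\gxy$ is a Sylow $2$-subgroup (a Klein four-group) and $\zent{\gx}=\grp{r^5}$ is the unique central involution; it lies in $\gxy$ and is characteristic in $\gx$. Running through the automorphisms $r\mapsto r^a$, $s\mapsto r^b s$ of $\dih{20}$ and imposing that $\gxy$ be preserved forces $b\in\{0,5\}$, and the induced action on $\gxy$ is either trivial or the transposition fixing $\zent{\gx}$ and interchanging the other two involutions. Hence $A_1^*$ is the order-two subgroup of $\sym{3}$ fixing the involution $\zent{\gx}$. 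An identical (easier) computation in $\ga=\dih{8}=\grp{\rho,\sigma}$, where $\gxy$ is a normal Klein four-subgroup and $\zent{\ga}=\grp{\rho^2}$ is the characteristic central involution lying in $\gxy$, shows that $A_2^*$ is the order-two subgroup fixing $\zent{\ga}$. With a labelling of the three involutions of $\gxy$ this gives $A_1^*=\grp{(1,2)}$ and $A_2^*=\grp{(2,3)}$, two distinct transpositions; a direct count shows $\grp{(1,2)}$ and $\grp{(2,3)}$ have exactly two double cosets in $\sym{3}$ (of sizes $4$ and $2$). By Goldschmidt's Lemma there are therefore exactly two isomorphism classes of amalgams of this type.

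It remains to see that precisely one is primitive. The key is that both $\zent{\gx}$ and $\zent{\ga}$ are involutions of $\gxy$, so I would first record the candidate common normal subgroups: the only nontrivial subgroup of $\gxy$ normal in $\dih{20}$ is $\zent{\gx}$ (the remaining order-two subgroups are non-normal reflections, and $\gxy$ itself is a non-normal Sylow subgroup), whereas the subgroups of $\gxy$ normal in $\dih{8}$ are $\zent{\ga}$ and $\gxy$. Consequently a nontrivial subgroup of $\gxy$ is normal in both $\gx$ and $\ga$ if and only if $\zent{\gx}=\zent{\ga}$ inside $\gxy$, which is exactly the failure of primitivity. The relative position of $\zent{\gx}$ and $\zent{\ga}$ in $\gxy$ (coincident, or meeting trivially) is preserved by amalgam isomorphisms — such an isomorphism carries centres to centres and commutes with the embeddings $\pi_x,\pi_e$ — so it is an invariant of the isomorphism class. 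Both values are realised (one may glue the two centres together, or glue them to distinct involutions of $\gxy$, each giving a genuine amalgam of degree $(5,2)$ of this type), and since there are only two classes the invariant must distinguish them: the class with $\zent{\gx}\cap\zent{\ga}=1$ is primitive and the class with $\zent{\gx}=\zent{\ga}$ is not. Note that in both cases $\zent{\gx}\zent{\ga}\leq\gxy$, since both centres already lie in $\gxy$.

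The step I expect to need the most care is the determination of $A_1^*$ and $A_2^*$: one must use the images of the full automorphism-normalisers $\norm{\aut{\gx}}{\gxy}$ and $\norm{\aut{\ga}}{\gxy}$ in $\aut{\gxy}$ (not merely the inner automorphisms, which here would be misleadingly small), and correctly identify which involution of $\gxy$ each of them fixes. Once these two fixed involutions are pinned down as $\zent{\gx}$ and $\zent{\ga}$, both the double-coset count and the primitivity dichotomy follow immediately.
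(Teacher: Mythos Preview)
Your proof is correct and follows essentially the same approach as the paper's: apply Goldschmidt's Lemma by identifying $A_1^*$ and $A_2^*$ as two distinct transpositions in $\aut{\gxy}\cong\sym{3}$, count the two double cosets, and then distinguish the classes by whether $\zent{\gx}$ and $\zent{\ga}$ coincide in $\gxy$. You have simply supplied the details the paper suppresses --- the explicit description of $\norm{\aut{\dih{20}}}{\gxy}$ and $\norm{\aut{\dih{8}}}{\gxy}$, the identification of the fixed involution of each $A_i^*$ with the corresponding centre, and the verification of which subgroups of $\gxy$ are normal in each factor.
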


\begin{lem}
\label{lem:q31 unique}
Let $\mathcal{A}=(\gx,\ga,\gxy)$ be an amalgam from row 4 of Table \ref{table:solgxy1}. There are three isomorphism classes of amalgams of type $\mathcal{A}$ and precisely one is primitive.
\begin{proof}
We identify $\aut{\gxy}$ with the group $\GL{2}{\mathbb{Z}/4\mathbb{Z}}$. Using generators for the group $\aut{\gx}\cong\frtw\times\dih{8}$ and $\aut{\ga}\cong\dih{8}:2^2$, we find that $A_1^*\cong\dih{8}$ and $A_2^*\cong 2^3$ and are generated by the matrices
\[ \begin{split}
A_1^*=\left \langle \left [ \begin{array}{cc} 1 & 0 \\ 0 & 3 \end{array} \right ], \left [ \begin{array}{cc} 1 & 1 \\ 0 & 1 \end{array} \right ] \right \rangle,\ A_2^*=\left \langle \left [ \begin{array}{cc} 3 & 0 \\ 0 & 3 \end{array} \right ], \left [ \begin{array}{cc} 0 & 1 \\ 1 & 0 \end{array} \right ] , \left [ \begin{array}{cc} 2 & 1 \\ 1 & 2 \end{array} \right ]  \right \rangle.
\end{split}
\]
Either by hand or with the aid of {\sc Magma} or {\sc Gap} one can verify that there are three $(A_1^*,A_2^*)$ double cosets in $\aut{\gxy}$, and so there are  three isomorphism classes of amalgams with this type. In Example \ref{eg:example1} we constructed three pairwise non-isomorphic amalgams of this type and precisely one is primitive.
\end{proof}
\end{lem}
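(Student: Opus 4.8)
The plan is to apply Goldschmidt's Lemma (Lemma \ref{lem:golds lem}): the number of isomorphism classes of amalgams of type $(\gx,\ga,\gxy)$ is exactly the number of $(A_1^*,A_2^*)$-double cosets in $\aut{\gxy}$. For the amalgam in row 4 we have $\gxy\cong\cyc{4}\times\cyc{4}$, so the first step is to regard $\gxy$ as a free $\mathbb{Z}/4\mathbb{Z}$-module of rank two and thereby identify $\aut{\gxy}$ with $\GL{2}{\mathbb{Z}/4\mathbb{Z}}$, a group of order $96$. All subsequent work takes place inside this concrete group of matrices.

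Next I would compute the two relevant images $A_i^*=\pi_i^*(\aut{A_i})$ inside $\aut{\gxy}$. This requires first pinning down $\aut{\gx}$ and $\aut{\ga}$; one checks that $\aut{\gx}\cong\frtw\times\dih{8}$ and $\aut{\ga}\cong\dih{8}:2^2$. Restricting those automorphisms normalising $\gxy$ to $\gxy$ and reading off their action on the chosen basis, I would verify that $A_1^*\cong\dih{8}$ and $A_2^*\cong 2^3$, each of order $8$, and that they are generated by the explicit matrices displayed in the statement. Here the generators of $A_1^*$ reflect that the order-$4$ Frobenius complement acts nontrivially on the $\frtw$-part of $\gx$ while leaving the $\cyc{4}$ direct factor alone, whereas $A_2^*$ incorporates the involution of $\ga\cong\cyc{4}\wr\cyc{2}$ that interchanges the two $\cyc{4}$ factors.

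With $A_1^*$ and $A_2^*$ realised as these two order-$8$ subgroups of $\GL{2}{\mathbb{Z}/4\mathbb{Z}}$, the core step is to enumerate the $(A_1^*,A_2^*)$-double cosets. This is a finite calculation, carried out either by hand by choosing coset representatives and tracking their orbits, or checked directly in \magma or \gp; I expect this to be the main obstacle, as it is the one genuinely computational point of the argument. The outcome is that there are exactly three such double cosets, and hence, by Goldschmidt's Lemma, exactly three isomorphism classes of amalgams of this type.

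Finally, to determine how many of the three classes are primitive I would appeal to Example \ref{eg:example1}, in which three pairwise non-isomorphic amalgams of precisely this type $(\frtw\times\cyc{4},\ \cyc{4}\wr\cyc{2},\ \cyc{4}\times\cyc{4})$ are constructed by varying only the definition of $\pi_2$. Matching those three against the three double cosets just found accounts for all isomorphism classes, and the primitivity analysis already performed there—exactly one choice of $\pi_2$ gives trivial common core, while the other two each produce a nontrivial $K\leq\gxy$ with $\pi_i(K)\nml A_i$ for $i=1,2$—shows that precisely one of the three classes is primitive, completing the proof.
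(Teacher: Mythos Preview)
Your proposal is correct and follows essentially the same approach as the paper's own proof: identify $\aut{\gxy}$ with $\GL{2}{\mathbb{Z}/4\mathbb{Z}}$, compute $A_1^*\cong\dih{8}$ and $A_2^*\cong 2^3$ as explicit matrix subgroups, count the three $(A_1^*,A_2^*)$-double cosets (by hand or machine), and then invoke Example~\ref{eg:example1} to see that exactly one of the three resulting amalgams is primitive. One small notational slip: you write $A_i^*=\pi_i^*(\aut{A_i})$, but $\pi_i^*$ is only defined on $\norm{\aut{A_i}}{\pi_i(B)}$, so the image is $\pi_i^*(\norm{\aut{A_i}}{\gxy})$.
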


\begin{lem}
There is a unique class of amalgams of type $\mathcal{A}=(\gx,\ga,\gxy)$ where $\mathcal{A}$ comes from either row 2 or row 3 of Table \ref{table:solgxy1}.
\begin{proof}
We write $\gxy=\grp{x,y}$ where $x$ has order 4 and $y$ has order 2, and consider the action of the groups $\aut{\gxy}$, $\norm{\aut{\gx}}{\gxy}$ and $\norm{\aut{\ga}}{\gxy}$ on $\Omega=\{x,x^{-1},xy,x^{-1}y\}$, the elements of order 4 in $\gxy$. Since $\aut{\gxy}\cong\dih{8}$ acts faithfully on $\Omega$ we may write elements of $A_1^*$ and $A_2^*$ as permutations of $\{1,2,3,4\}$  (acting on subscripts after  labelling $x_1=x$, $x_2=x^{-1}$, $x_3=xy$, $x_4=x^{-1}y$). In both cases we see $A_1^*=\grp{(1,3)(2,4)}$ and $A_2^*$ contains the subgroup $\grp{(1,2)(3,4),(3,4)}$. Hence $\aut{\gxy}=A_1^*A_2^*$, so by the Goldschmidt Lemma there is a unique class of amalgams.

\end{proof}
\end{lem}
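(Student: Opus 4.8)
The plan is to apply Goldschmidt's Lemma (Lemma \ref{lem:golds lem}) to show that the relevant double coset space in $\aut{\gxy}$ is trivial, equivalently that $\aut{\gxy}=A_1^*A_2^*$. For both rows 2 and 3 of Table \ref{table:solgxy1} the common subgroup is $\gxy\cong \cyc{4}\times\cyc{2}$, so the first task is to pin down $\aut{\gxy}$ concretely. Writing $\gxy=\grp{x,y}$ with $x$ of order $4$ and $y$ of order $2$, I would note that any automorphism is determined by where it sends a generating pair; the element $x$ of order $4$ must map to another element of order $4$, and there are exactly four such elements, namely $\Omega=\{x,x^{-1},xy,x^{-1}y\}$. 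Since $y=x^2\cdot(\text{the unique involution outside }\grp{x})$ is not quite canonical, the cleanest route is to observe that $\aut{\gxy}$ acts faithfully on $\Omega$: an automorphism fixing all four order-$4$ elements fixes $x$ and $xy$, hence fixes $y=x^{-1}(xy)$ and is the identity. This faithful action realises $\aut{\gxy}$ as a subgroup of $\sym{4}=\sym{\Omega}$, and a quick order count ($|\aut{\cyc{4}\times\cyc{2}}|=8$) identifies it as a copy of $\dih{8}$ acting on the four points.

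With the identification $\aut{\gxy}\cong \dih{8}\leq \sym{4}$ in hand (after labelling $x_1=x$, $x_2=x^{-1}$, $x_3=xy$, $x_4=x^{-1}y$), the next step is to compute the images $A_1^*$ and $A_2^*$ as permutation subgroups. Here $A_1^*$ is the image in $\aut{\gxy}$ of $\norm{\aut{\gx}}{\gxy}$ and $A_2^*$ the image of $\norm{\aut{\ga}}{\gxy}$; since $\gx\cong\frtw\times\cyc{2}$ in both rows and $\ga$ is $\mathrm{N}_{16}$ or $\mathrm{M}_{16}$, these normaliser images can be read off from the structure of those groups. The claim to verify by direct inspection is that $A_1^*=\grp{(1,3)(2,4)}$ and that $A_2^*\supseteq\grp{(1,2)(3,4),(3,4)}$ in both cases. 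I would justify the first by noting that the nontrivial automorphism of $\gx$ normalising $\gxy$ swaps $\vstx{1}$ and $\vsty{1}$ and hence interchanges $x\leftrightarrow xy$ and $x^{-1}\leftrightarrow x^{-1}y$; the second by exhibiting inner automorphisms of $\ga$ (conjugation by the order-$16$ or order-$2$ elements constructed in Lemma \ref{lem:edge16}) that realise the listed permutations.

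The final step is the product check: $\grp{(1,2)(3,4),(3,4)}$ together with $(1,3)(2,4)$ generates all of the $\dih{8}$ sitting inside $\sym{4}$, and more sharply, every element of $\aut{\gxy}$ can be written as a product $a_1^*a_2^*$. Since $|A_2^*|\geq 4$ and $(1,3)(2,4)\notin A_2^*$, counting gives $|A_1^*A_2^*|\geq 8=|\aut{\gxy}|$, forcing $\aut{\gxy}=A_1^*A_2^*$. By Goldschmidt's Lemma there is then a single $(A_1^*,A_2^*)$ double coset, hence a unique isomorphism class of amalgam of each of these two types. I expect the main obstacle to be the bookkeeping in the second step: correctly extracting the permutation representations $A_1^*$ and $A_2^*$ from the rather different internal structures of $\mathrm{N}_{16}$ and $\mathrm{M}_{16}$, and confirming that despite these differences the images in $\aut{\gxy}$ coincide sufficiently for the product to exhaust $\dih{8}$. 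Once the two permutation subgroups are identified the double-coset count itself is immediate.
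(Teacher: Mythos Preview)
Your overall strategy---realise $\aut{\gxy}\cong\dih{8}$ via its faithful action on the four order-$4$ elements, compute $A_1^*$ and $A_2^*$ as permutation groups, and show $A_1^*A_2^*=\aut{\gxy}$---is exactly the paper's, and your target identifications $A_1^*=\grp{(1,3)(2,4)}$ and $A_2^*\supseteq\grp{(1,2)(3,4),(3,4)}$ agree with theirs. However, your justification for $A_1^*$ is wrong. The subgroup $\vstx{1}$ is the centre of $\gx\cong\frtw\times\cyc{2}$ and hence characteristic; no automorphism of $\gx$ can move it, let alone swap it with $\vsty{1}$. In fact the permutation $(1,3)(2,4)$ corresponds to $x\mapsto xy$, $y\mapsto y$, which fixes \emph{both} $\vstx{1}=\grp{y}$ and $\vsty{1}=\grp{x^2y}$. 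The correct argument is that every $\alpha\in\norm{\aut{\gx}}{\gxy}$ fixes $y$ (as $\grp{y}=\zent{\gx}$), so its restriction to $\gxy$ is determined by $\alpha(x)$; a direct check inside $\frtw\times\cyc{2}$ shows that only $x\mapsto x$ and $x\mapsto xy$ extend, giving $|A_1^*|=2$. The ``swap $\vstx{1}\leftrightarrow\vsty{1}$'' reasoning you offered actually yields the permutation $(3,4)$ (namely $x\mapsto x$, $y\mapsto x^2y$) and belongs on the $A_2^*$ side, coming from conjugation by an element of $\ga\setminus\gxy$.

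Two minor points: there are no ``order-$16$ elements'' in a non-cyclic group of order $16$; and your claim $(1,3)(2,4)\notin A_2^*$ is unverified and unnecessary---it is enough that $(1,3)(2,4)\notin\grp{(1,2),(3,4)}$, which already forces $|A_1^*\cdot\grp{(1,2),(3,4)}|=8$.
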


\begin{lem}
Suppose that $\mathcal{A}$ is an amalgam from  Table \ref{table:solgx1}. Then $\mathcal{A}$ is the unique  amalgam of this type.
\begin{proof}
For the amalgams in rows 1-3 there is nothing to prove since $\aut{\gxy}=1$. For the remaining amalgams $\aut{\gxy}\cong \cyc{2}$. Inspecting $\aut{\ga}$ we find an element which inverts $\gxy$ in all cases, so we are done.
\end{proof}
\end{lem}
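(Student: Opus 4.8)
The plan is to apply Goldschmidt's Lemma (Lemma \ref{lem:golds lem}) to each of the seven amalgams in Table \ref{table:solgx1}, showing in each case that the relevant double coset space in $\aut{\gxy}$ is a single point, which happens precisely when $A_1^*A_2^* = \aut{\gxy}$. The amalgams split naturally into two groups according to the structure of $\gxy$, so I would argue them in two batches.

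First I would dispose of the amalgams $\mathcal{Q}_1^1$, $\mathcal{Q}_1^2$ and $\mathcal{Q}_1^3$ in rows 1--3, where $\gxy$ is trivial (for $\mathcal{Q}_1^1$) or cyclic of order $2$ (for $\mathcal{Q}_1^2$ and $\mathcal{Q}_1^3$). In all three cases $\aut{\gxy}$ is trivial: the automorphism group of the trivial group and of $\cyc{2}$ are both trivial. Since the whole of $\aut{\gxy}$ is then a single element, there is a unique double coset, and hence a unique amalgam of each type by Goldschmidt's Lemma, with no further computation required.

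For the remaining amalgams $\mathcal{Q}_2^1$--$\mathcal{Q}_2^4$ in rows 4--7 we have $\gxy\cong\cyc{4}$, so $\aut{\gxy}\cong\cyc{2}$, generated by the inversion automorphism $t\mapsto t^{-1}$. Here it suffices to produce a single nontrivial element of $A_2^*$, i.e.\ to exhibit an automorphism of $\ga$ that normalises $\gxy$ and restricts to inversion on it; then $A_2^*=\aut{\gxy}$ and so trivially $A_1^*A_2^*=\aut{\gxy}$, giving a unique double coset. Equivalently, one may observe that in each of the four groups $\ga\in\{\cyc{4}\times\cyc{2},\,\cyc{8},\,\dih{8},\,\mathrm{Q}_8\}$ the cyclic subgroup $\gxy\cong\cyc{4}$ is inverted by conjugation by some element of $\ga$ (an inner automorphism suffices in the non-abelian cases $\dih{8}$ and $\mathrm{Q}_8$, while in the abelian cases $\cyc{4}\times\cyc{2}$ and $\cyc{8}$ one uses an outer automorphism, which exists since $\aut{\cyc{4}\times\cyc{2}}$ and $\aut{\cyc{8}}$ both surject onto $\aut{\cyc{4}}\cong\cyc{2}$ via restriction). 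This realises inversion inside $A_2^*$ and finishes the argument.

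The main point to verify carefully — though it is routine — is that in each of the four order-$8$ groups the inversion of the distinguished $\cyc{4}$ is genuinely induced by an element of $\aut{\ga}$ normalising $\gxy$, so that it lands in $A_2^*$ rather than merely in $\aut{\gxy}$ abstractly. There is no real obstacle here because $\aut{\gxy}$ has order $2$, so hitting any nonidentity element of it from either $A_1^*$ or $A_2^*$ immediately forces the product $A_1^*A_2^*$ to exhaust $\aut{\gxy}$; the only way the argument could fail is if $\gxy$ were characteristically inverted by nothing in $\aut{\ga}$, which one checks directly does not happen. I would phrase the conclusion uniformly: inspecting $\aut{\ga}$ in each case yields an element inverting $\gxy$, hence $\aut{\gxy}=A_2^*$ and the amalgam is unique.
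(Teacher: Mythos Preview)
Your proposal is correct and follows essentially the same approach as the paper: split into the cases where $\aut{\gxy}$ is trivial (rows 1--3) versus $\aut{\gxy}\cong\cyc{2}$ (rows 4--7), and in the latter case exhibit an element of $\aut{\ga}$ inverting $\gxy$ so that $A_2^*=\aut{\gxy}$. The paper's proof is simply a terser version of exactly this argument.
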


The final result of this paper is a presentation for the universal completion of each of the primitive amalgams which are original in this paper. For the remainder, that is $\mathcal{Q}_4^1$ - $\mathcal{Q}_4^6$ and $\mathcal{Q}_5^1$, we quote the presentations from \cite{Weiss-pres} (adjusted so that the commutators fit with our notation).

\begin{thm}
\label{thm:presses}
Suppose that $\mathcal{Q}_i^j=(\gx,\ga,\gxy)$ is a primitive amalgam of degree (5,2) and that $G_i^j$ is the universal completion. Then a presentation for $G_i^j$ is given in Tables \ref{table:pres} and \ref{table:pres2}.
\begin{proof}
We take a set of generators $X$ for $\gxy$ with relations $R$. Then $G_i^j$ has a presentation $\grp{X,a,b \mid R,S,T}$ where $a$, $b$, $S$ and $T$ are such that $\grp{X,a \mid R, S }=\gx$ and $\grp{X,b \mid R,T}=\ga$. Note that two extra generators suffices since $\gxy$ is a maximal subgroup of both $\gx$ and $\ga$. When $\ga$ does not split over $\gxy$ we may adjust $R$ so that we have $b^2 \in X$, if this offers some advantage we do this. Of course there is no relation between $a$ and $b$. For $i \geq 4$ we have reproduced the presentations from \cite{Weiss-pres} in Table \ref{table:pres2}.
\end{proof}
\end{thm}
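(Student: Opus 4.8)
The plan is to exhibit, for each primitive amalgam $\mathcal{Q}_i^j$ with $i\leq 3$, an explicit presentation of the universal completion $G_i^j=\gx*_{\gxy}\ga$ by combining presentations of the three groups. The underlying principle is the standard fact that if $\gx=\grp{X,a\mid R,S}$ and $\ga=\grp{X,b\mid R,T}$ are presentations \emph{in which the generating set $X$ together with the relations $R$ presents the common subgroup $\gxy$}, then the free amalgamated product has presentation $\grp{X,a,b\mid R,S,T}$. This is a direct consequence of the universal property of $\gx*_{\gxy}\ga$ described in Section \ref{sec:pre}: a homomorphism out of the free product on $X\cup\{a,b\}$ modulo $R\cup S\cup T$ is precisely a compatible pair of homomorphisms out of $\gx$ and $\ga$ agreeing on $\gxy$, which is exactly a completion of $\mathcal{A}$.

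First I would fix, once and for all, a concrete presentation $\grp{X\mid R}$ of $\gxy$ for each amalgam, reading the isomorphism type of $\gxy$ off the relevant table (Tables \ref{table:solgx1}, \ref{table:solgxy1}, \ref{table:nonsolgx1=1}, \ref{table:nonsolgxy1=1}). Next, since by the remark in the proof sketch $\gxy$ is a maximal subgroup of both $\gx$ and $\ga$ (the indices being $5$ and $2$ respectively), each overgroup is generated by $\gxy$ together with a single extra element; so one new generator $a$ suffices to pass from $\gxy$ to $\gx$, and one new generator $b$ to pass from $\gxy$ to $\ga$. I would then add the relations $S$ recording how $a$ acts on and combines with $X$ to build $\gx$, and similarly the relations $T$ for $b$ and $\ga$. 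When $\ga$ is a non-split extension of $\gxy$ one cannot take $b$ to be an involution lying outside $\gxy$; the remark addresses this by allowing $R$ to be modified so that $b^2\in X$, which keeps the relation set manageable (for example in $\mathcal{Q}_2^6$ where $\ga\cong\mathrm{M}_{16}$ and in $\mathcal{Q}_1^3$ where $\ga\cong\cyc4$).

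The genuinely non-routine part is verifying that the chosen $S$ and $T$ really do present $\gx$ and $\ga$ respectively \emph{relative to the fixed $\grp{X\mid R}$} — that is, that I have not inadvertently imposed extra relations collapsing the group, nor omitted relations so that the presented group is too large. For the soluble cases this is a finite check that the group $\grp{X,a\mid R,S}$ has the correct order and isomorphism type, done by hand using the explicit small groups involved (e.g. $\frtw$, $\dih{20}$, $\cyc4\wr\cyc2$, $\mathrm{N}_{16}$, $\mathrm{M}_{16}$), and likewise for $\ga$. For the non-soluble cases with $\alt5$ or $\sym5$ acting, I would use a standard Coxeter-type or known finite presentation of $\alt5,\sym5,\sym4,\alt4$ and their relevant products, checking compatibility of the embeddings $\pi_x,\pi_e$ against the amalgam data established in Section \ref{sec:gxy1=1nonsol}. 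No relation is imposed between $a$ and $b$, reflecting that the amalgamated product is free over $\gxy$.

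Once each pair $(S,T)$ is confirmed to present $\gx$ and $\ga$ over the common $\grp{X\mid R}$, the presentation $\grp{X,a,b\mid R,S,T}$ for $G_i^j$ follows immediately from the amalgam presentation principle above, and I would simply record the resulting presentations in Tables \ref{table:pres} and \ref{table:pres2}, quoting the presentations of Weiss \cite{Weiss-pres} for $i\geq 4$ after adjusting commutator conventions to match ours. The main obstacle, then, is purely the bookkeeping of producing correct minimal relation sets $R$, $S$, $T$ for each of the concrete groups and confirming their orders; the amalgam-theoretic step is formal.
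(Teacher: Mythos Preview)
Your proposal is correct and follows essentially the same approach as the paper: fix a presentation $\grp{X\mid R}$ of $\gxy$, adjoin one generator $a$ with relations $S$ to obtain $\gx$ and one generator $b$ with relations $T$ to obtain $\ga$ (using maximality of $\gxy$ in each), and invoke the standard presentation of an amalgamated free product, handling non-split $\ga$ by arranging $b^2\in X$ and quoting Weiss for $i\geq 4$. Your write-up is somewhat more explicit about the verification step and the universal-property justification, but the argument is the same.
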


\begin{rem}
The presentations have been chosen for ease of use, certainly more efficient presentations exist. Commutators are written $[x,y]=x^{-1}y^{-1}xy$ and conjugation  $x^y=y^{-1}xy$.
\end{rem}

The universal completions for the amalgams $\mathcal{Q}_4^1$-$\mathcal{Q}_4^6$ are generated by elements $a$, $e_0$, $c$, $f$ and $g$. For $i\in \field{Z}$ we define $e_i:= a^i e_0 a^{-i}$ and $t=e_0e_3e_0$.
The universal completion of the amalgam $\mathcal{Q}_5^1$ is generated by elements $a$, $e_0$ and $c$, and as before set $e_i:= a^i e_0 a^{-i}$. We also define $t:=e_0e_4e_0$, $f:=a ca\inv $ and $g=(ta)^2$.

\begin{table}[ht!]
\setlength{\extrarowheight}{1.8pt}
\begin{center}
\begin{tabular}{ c |p{2cm}|p{10cm} }  \hline

Type & Generators	& Relations \\ \hline \hline
$\mathcal{Q}_1^1$ & $a,$ $b$ & $a^5$, $b^2$ \\ \hline
$\mathcal{Q}_1^2$ & $a$, $b$, $c$ & $a^5$, $b^2$,  $c^2$, $(ac)^2$, $(bc)^2$ \\ \hline
$\mathcal{Q}_1^3$ & $a$, $b$  & $a^5$, $ b^4$, $ (b^2 a)^2$ \\ \hline
$\mathcal{Q}_1^4$ &  $a$, $b$, $c$ &  $a^{5}$, $b^4$, $c^2$, $(b c)^2$,  $(ab^2)^2$, $[a,c]$ \\ \hline \hline
$\mathcal{Q}_2^1$ & $ a$, $b$, $c$ & $a^5$, $b^2$, $c^4$, $a^c a^3$, $[b, c]$ \\ \hline
$\mathcal{Q}_2^2$ & $a$, $b$ & $a^5$, $b^8$, $a^{b^2} a^3$  \\ \hline
$\mathcal{Q}_2^3$ & $a$, $b$, $c$ &  $a^5$, $b^2$, $c^4$, $ a^c a^3$, $(c b)^2$ \\ \hline
$\mathcal{Q}_2^4$ & $a$, $b$, $c$ & $ a^5$, $b^4$, $c^4$ $a^c  a^3$, $c^b c$  \\ \hline

$\mathcal{Q}_2^5$	&	$a$, $b$, $c$, $d$	&	$a^5$, $b^2$, $c^4$, $d^2$, $a^ca^3$, $[a,d]$, $[b,c]$, $[c,d]$, $d^bc^2d$ \\ \hline
$\mathcal{Q}_2^6$	&	$a$, $b$, $c$ & $a^5$, $b^8$, $c^2$, $ a^{b^2} a^3$, $b^c b^3$, $[a,c]$ \\ \hline

$\mathcal{Q}_2^7$	&	$a$, $b$, $c$, $d$	& $a^3$, $b^2$, $c^3$, $d^3$, $(dc)^2$, $(da)^2$, $c^a c^2 d$, $(bc)^2$, $b^dbc$\\ \hline
$\mathcal{Q}_2^8$	&	$a$, $b$, $c$, $d$	& $a^3$, $b^2$, $c^3$, $d^3$, $(dc)^2$, $(da)^2$, $c^a c^2 d$, $[b,c]$, $[b,d]$\\ \hline
$\mathcal{Q}_2^9$	&	$a$, $b$, $c$, $d$	& $a^5$, $b^2$, $c^4$, $d^2$, $(cd)^3$, $[b,c]$, $[b,d]$, $a^3cad$ \\ \hline \hline

$\mathcal{Q}_3^1 $ & $a$, $b$, $c$ & $ a^5$, $ b^2$,  $ c^4$, $ a^c a^3$, $ [a, c^b]$, $ [c, c^b]$ \\ \hline

$\mathcal{Q}_3^2 $ &  $a$, $b$, $c$, $d$, $e$, $f$ & $a^3$, $ b^2 $, $ c^3$, $ d^3$,  $ e^3$, $ f^3$, $ (fe)^2$, $ [e,  c]$, $ [f,  c]$,  $ [e,  d]$, $ [f,  d]$, $ (dc)^2$,  $ [e,  a]$, $ [f,  a]$, $ (ad)^2$, $ c^a c^2 d $, $ e^b c$, $ f^b d$ \\ \hline

$\mathcal{Q}_3^3$ & $a$, $b$, $c$, $d$, $e$, $f$, $g$ & $c^3$, $ d^3$, $ e^2$, $ f^3$, $ g^3$, $ (gf)^2$,  $ [f, c]$, $[g, c]$,  $[f, d]$, $[g, d]$,   $(dc)^2$, $ (ef)^2$, $ (ec)^2$, $ e^g   f^2   e$, $ e^d   c^2   e$, $a^3$, $[f, a]$, $[g, a]$, $(ad)^2$, $ e   e^a$, $b^2$, $ f^2   c^b$, $ g^2   d^b$, $ (e   b)^2 $ \\  \hline

$\mathcal{Q}_3^4$ & $a$, $b$, $c$, $d$, $e$, $f$  &   $c^3$,     $d^3$,     $e^3$,     $f^3$,    $(d c)^2$,  $  [c, e]$, $  [d,  e]  $, $  [c,  f]  $,    $  [d,  f]  $,    $(f   e)^2 $,   $b^4 $,        $c^2   e^b $,    $d^2   f^b $,    $e c^b $,    $d^b e  f^2 $,    $a^3 $,    $ [c,  a] $,    $ [d, a] $,    $(af)^2 $,    $  [b^2,   a] $   \\ \hline

$\mathcal{Q}_3^5$	& $a$, $b$, $c$, $d$, $e$, $f$	 &  $c^4$, $d^2$, $e^4$, $f^2$, $(c d)^3$, $(e f)^3$, $[c,e]$, $[c,f]$, $[d,e]$, $[d,f]$, $a^5$, $a^3 c a d$, $[a,e]$, $[a,f]$, $b^2$, $c^b e^3$, $d^b f$ \\  \hline \hline
\end{tabular}
\caption{Presentations for the universal completions of finite, primitive (5,2) amalgams with $s\leq 3$.}
 \label{table:pres}
 \end{center}
\end{table}

\begin{table}[ht!]
\setlength{\extrarowheight}{1.8pt}
\begin{center}
\begin{tabular}{ c |p{2cm}|p{10.5cm} }  \hline

Type & Generators	& Relations \\ \hline \hline
$\mathcal{Q}_4^1$	& $a$, $e_0$, $c$ & $e_0^2$,$c^3$,$(e_0 e_3)^3,$  $t c t ^{-1} c,$ $(e_0 c)^3,$ $(c e_0 e_3)^5,$ $t a t ^{-1} a,$ $[e_0,e_1],$ $[e_0,c e_1 c ^{-1}],$ $[e_0,e_2] e_1,$ $[e_0,c e_2 c ^{-1}] c ^{-1} e_1 c,$ $a c a ^{-1} c$  \\ \hline

$\mathcal{Q}_4^2$	& $a$, $e_0$, $c$ & $e_0^2,$ $c^3,$ $(e_0 e_3)^3,$ $t c t^{-1} c,$ $(e_0 c)^3,$ $(c e_0 e_3)^5,$ $t a t^{-1} a,$ $[e_0,e_1],$ $[e_0,c e_1 c^{-1} ] ,$ $[e_0,e_2] e_1,$ $[e_0,c e_2 c^{-1}] c^{-1} e_1 c,$ $[a,c]$ \\ \hline
$\mathcal{Q}_4^3$	& $a$, $e_0$, $c$, $f$ & $e_0^2,$ $c^3,$ $f^3,$ $(e_0 e_3)^3,$ $t c t^{-1} c,$ $(e_0 c)^3,$ $(c e_0 e_3)^5,$ $t a t^{-1}  a,$ $[e_0,e_1],$ $[e_0,c e_1 c^{-1}],$ $[e_0,e_2] e_1,$ $[e_0,c e_2 c^{-1}]  c^{-1}  e_1 c,$ $[c,a],$ $[c,f],$ $[e,f],$ $a f  (cfa)^{-1} $ \\ \hline

$\mathcal{Q}_4^4$	& $a$, $e_0$, $c$, $f$ & $e_0^2,  $ $c^3,  $ $f^3,  $ $(e_0 e_3)^3,  $ $t c t^{-1}  c,  $ $(e_0 c)^3,  $ $(c e_0 e_3)^5,  $ $t a t^{-1}  a,  $ $[e_0,e_1],  $ $[e_0,c e_1 c^{-1}],  $ $[e_0,e_2] e_1,  $ $[e_0,c e_2 c^{-1}] c^{-1} e_1 c,  $ $a c a^{-1} c,  $ $[c,f],  $ $[e,f],  $ $a f a^{-1} f c^{-1}  $ \\ \hline

$\mathcal{Q}_4^5$	& $a$, $e_0$, $c$, $g$ & $e_0^2, $ $c^3, $ $g^2, $ $(e_0 e_3)^3, $ $t c t^{-1} c, $ $(e_0 c)^3, $ $(c e_0 e_3)^5, $ $t a t^{-1} a, $ $[e_0,e_1], $ $[e_0,c e_1 c^{-1}], $ $[e_0,e_2] e_1, $ $[e_0,c e_2 c^{-1}] c^{-1} e_1 c,$ $[a,c],$ $[e_0,g],$ $[a,g],$ $g c g c $\\ \hline

$\mathcal{Q}_4^6$	& $a$, $e_0$, $c$, $f$, $g$ & $e_0^2, $ $c^3, $ $g^2, $ $f^3, $ $(e_0 e_3)^3, $ $t c t^{-1} c, $ $(e_0 c)^3, $ $(c e_0 e_3)^5, $ $t a t^{-1} a, $ $[e_0,e_1], $ $[e_0,c e_1 c^{-1}], $ $[e_0,e_2] e_1, $ $[e_0,c e_2 c^{-1}] c^{-1} e_1 c, $ $[c,a], $ $[e_0,g], $ $[a,g], $ $g c g c, $ $g f g f, $ $[c,f], $ $[e,f], $ $a f (cfa)\inv $ \\ \hline \hline

$\mathcal{Q}_5^1$	& $a,$ $e_0$, $c$ & $c^3,$ $e_0^2,$ $(e_0 e_4)^3,$ $t c t \inv c,$ $g^2,$ $[e_0,g],$ $[a,g],$ $c^g c,$ $(e_0 c)^3,$ $[e_2,c],$ $(ce_0e_4)^5,$ $[c,f],$ $af (cfa)^{-1},$ $[e_0,e_1],$ $[e_0,e_2],$ $[e_0,e_3]e_2e_1$ \\ \hline

\end{tabular}
\caption{Presentations for the universal completions of finite, primitive (5,2) amalgams with $s\geq 4$.}
 \label{table:pres2}
 \end{center}
\end{table}

\section{Proofs of the main theorems}

Here we tie together the results of the previous sections to provide proofs for the main results of this paper. Theorem \ref{thm:bigthm2} is part of Theorem \ref{thm:presses}. As remarked in the introduction and explained in Section \ref{sec:pre}, Corollary \ref{bigthm1} follows immediately from \ref{thm:bigthm}. We give below the proof of \ref{thm:bigthm}.

\begin{proof}[Proof of Theorem \ref{thm:bigthm}]
Let $\mathcal{A}$ be a primitive amalgam of degree (5,2). The type of $\mathcal{A}$ is found in Section \ref{sec:valofs} if $s\geq 4$ and in Sections \ref{sec:gxy1=1sol} and \ref{sec:gxy1=1nonsol} if $s\leq 3$. We determine that $\mathcal{A}$ is the unique primitive amalgam of this type in Section \ref{sec:uniandpres}.
\end{proof}

Finally we consider the values of $s$ in Table \ref{table:bigtable}. 
\begin{proof}[Proof of Corollary \ref{cor:stranscor}]
Since $s\geq 2$ if and only if $G_x$ acts 2-transitively on the cosets of $G_{xy}$ in $G_x$, and for $s\geq 4$ the values are known, we simply have to argue that $s=2$ or $s=3$ in the various instances. Observe that $s=3$ implies $4=|G_{xyz}:G_{xyzw}|$ for any 3-path $(x,y,z,w)$. Using Theorem \ref{thm:bigthm} for the amalgams $\mathcal{Q}_2^1$ - $\mathcal{Q}_2^9$ we see this doesn't hold. For the amalgams $\mathcal{Q}_3^1$ - $\mathcal{Q}_3^5$, not only does the equality hold, but we see the projection $G_{xyz}\vst{z}{1}/\vst{z}{1}$ is cyclic of order 4, hence $s=3$ for these amalgams.
\end{proof}

\bibliographystyle{plain}
\bibliography{../biblio}

\end{document}